\definecolor{ddarkbrown}{rgb}{0.5,0.2,0.05} \definecolor{bbluegray}{rgb}{0.05,0,0.5}
\newtheorem{theorem}{Theorem}[section]
\newtheorem{proposition}[theorem]{Proposition}
\newtheorem{definition}[theorem]{Definition}
\newtheorem{lemma}[theorem]{Lemma}
\newtheorem{corollary}[theorem]{Corollary}
\renewenvironment{proof}{\textbf{Proof.}}{\QED\bigskip}
\newtheorem{assumption}[theorem]{Assumption}
\def\xx{{\boldsymbol x}}
\def\yy{{\boldsymbol y}}
\def\vv{{\boldsymbol v}}
\newcommand{\BEAS}{\begin{eqnarray*}}
\newcommand{\EEAS}{\end{eqnarray*}}
\newcommand{\BEA}{\begin{eqnarray}}
\newcommand{\EEA}{\end{eqnarray}}
\newcommand{\BEQ}{\begin{equation}}
\newcommand{\EEQ}{\end{equation}}
\newcommand{\BIT}{\begin{itemize}}
\newcommand{\EIT}{\end{itemize}}
\newcommand{\BNUM}{\begin{enumerate}}
\newcommand{\ENUM}{\end{enumerate}}
\newcommand{\BA}{\begin{array}}
\newcommand{\EA}{\end{array}}
\newcommand{\ones}{\mathbf 1}
\newcommand{\reals}{{\mathbb R}}
\newcommand{\diam}{\mathop{\bf diam}}
\newcommand{\Rank}{\mathop{\bf Rank}}
\newcommand{\Card}{\mathop{\bf Card}}
\newcommand{\Prob}{\mathop{\bf Prob}}
\newcommand{\Co}{{\mathop {\bf Co}}}
\newcommand{\Po}{{\mathop {\bf Po}}}
\newcommand{\Ext}{{\mathop {\bf Ext}}}
\newcommand{\dist}{\mathop{\bf dist{}}}
\newcommand{\QED}{~~\rule[-1pt]{6pt}{6pt}}
\newcommand{\argmin}{\mathop{\rm argmin}}
\newcommand{\epi}{\mathop{\bf epi}}
\newcommand{\dom}{\mathop{\bf dom}}
\let \oldsection \section
\renewcommand{\section}{\vspace{3ex plus 1ex}\oldsection}
\begin{document}
\title{An Approximate Shapley-Folkman Theorem.}
% Stable Bounds on the Duality Gap of Finite Sum Minimization Problems.

\author{Thomas Kerdreux}
\address{D.I., UMR 8548,\vskip 0ex
\'Ecole Normale Sup\'erieure, Paris, France.}
\email{thomaskerdreux@gmail.com}

\author{Igor Colin}
\address{Huawei, France.}
\email{igor.colin@gmail.com}

\author{Alexandre d'Aspremont}
\address{CNRS \& D.I., UMR 8548,\vskip 0ex
\'Ecole Normale Sup\'erieure, Paris, France.}
\email{aspremon@ens.fr}

\keywords{}
\date{\today}
\subjclass[2010]{}

\begin{abstract}
The Shapley-Folkman theorem shows that Minkowski averages of uniformly bounded sets tend to be convex when the number of terms in the sum becomes much larger than the ambient dimension. In optimization, \citet{Aubi76} show that this produces an a priori bound on the duality gap of separable nonconvex optimization problems involving finite sums. This bound is highly conservative and depends on unstable quantities, and we relax it in several directions to show that non convexity can have a much milder impact on finite sum minimization problems such as empirical risk minimization and multi-task classification. As a byproduct, we show a new version of Maurey's classical approximate Carath\'eodory lemma where we sample a significant fraction of the coefficients, without replacement, as well as a result on sampling constraints using an approximate Helly theorem, both of independent interest.
\end{abstract}
\maketitle

\section{Introduction}\label{s:intro}
We focus on separable optimization problems written
\BEQ\label{eq:ncvx-pb}\tag{P}
\BA{ll}
\mbox{minimize} & \sum_{i=1}^{n} f_i(x_i) \\
\mbox{subject to} & Ax\leq b,\\
& x_i \in Y_i, \quad i=1,\ldots,n,
\EA\EEQ
in the variables $x_i\in\reals^{d_i}$ with $d=\sum_{i=1}^n d_i$, where the functions $f_i$ are lower semicontinuous (but not necessarily convex), the sets $Y_i \subset \dom f_i$ are compact, and $A\in\reals^{m \times d}$ and $b\in\reals^m$. \citet{Aubi76} showed that the duality gap of problem~\eqref{eq:ncvx-pb} vanishes when the number of terms $n$ grows towards infinity while the dimension $m$ remains bounded, provided the nonconvexity of the functions $f_i$ is uniformly bounded. The result in \citep{Aubi76} hinges on the fact that the epigraph of problem ~\eqref{eq:ncvx-pb} can be written as a Minkowski sum of $n$ sets in dimension $m$. In this setting, the Shapley-Folkman theorem shows that if $V_i \subset\reals^m$, $i=1,\ldots,n$ are arbitrary subsets of $\reals^m$ and
\[
x\in\Co\left(\sum_{i=1}^n V_i\right)
\qquad
\mbox{then}
\qquad
x \in~ \sum_{[1,n]\setminus \mathcal{S}} V_i ~ + ~  \sum_{\mathcal{S}} \Co(V_i)
\]
for some $|\mathcal{S}|\leq m$. If the sets $V_i$ are uniformly bounded, $n$ grows and $m$ remains bounded, the term $\sum_{\mathcal{S}} \Co(V_i)$ becomes negligible and the Minkowski sum $\sum_i V_i$ is increasingly close to its convex hull. In fact, several measures of nonconvexity decrease monotonically towards zero when $n$ grows in this setting, with \citep{Frad17} showing for instance that the Hausdorff distance 
\[
d_H\left(\sum_i V_i,\Co\left(\sum_i V_i\right)\right) \rightarrow 0.\] 
We illustrate this phenomenon graphically in Figure~\ref{fig:l12ball}, where we show the Minkowski mean of $n$ unit~$\ell_{1/2}$ balls for $n=1,2,10,\infty$ in dimension 2, and the average of five arbitrary point sets (defined from digits here). In both cases, Minkowski averages are nearly convex for relatively small values of $n$.

% Further lit review
The Shapley-Folkman theorem was derived by Shapley \& Folkman in private communications and first published by \citep{Star69}. It was used by \citet{Aubi76} to derive a priori bounds on the duality gap. The continuous limit of this result is known as the Liapunov convexity theorem and shows that the range of non-atomic, vector valued measures is convex \citep{Auma65,Berl73}. The results of \citet{Aubi76} were extended in \citep{Ekel99} to generic separable constrained problems, and also by \citep{Laue82,Bert14} to more precise yet less explicit nonconvexity measures, who describe applications to large-scale unit commitment problems. Extreme points of the set of solutions of a convex relaxation to problem~\eqref{eq:ncvx-pb} are used to produce good approximations and \citep{Udel16} describe a randomized purification procedure to find such points with probability one.

% Contribution
The Shapley-Folkman theorem is a direct consequence of the conic version of Carath\'eodory's theorem, with the number of terms in the conic representation of optimal points controlling the duality gap bound. Our first contribution seeks to reduce this number by allowing a small approximation error in the conic representation. This essentially trades off approximation error with duality gap. In general, these approximations are handled by Maurey's classical approximate Carath\'eodory lemma \citep{Pisi81}. Here however we need to sample a very high fraction of the coefficients, hence we produce a {\em high sampling ratio} version of the approximate Carath\'eodory lemma using results by \citep{Serf74,Bard15,Schn16} on sampling sums without replacement. 

The duality gap bounds produced using the Shapley-Folkman theorem also directly depend on the number of active constraints at the optimum. Reducing this number using different representations or approximations of the feasible set thus has a significant impact on the tightness of these bounds. In this vein, we show a constraint sampling result, of independent interest, using recent results by \citep{Adip19} on an approximate version of Helly's theorem.

We then use these results to produce an approximate version of the duality gap bound in \citep{Aubi76} which allows a direct tradeoff between the impact of nonconvexity and the approximation error. This approximate formulation also has the benefit of writing the gap bound in terms of stable quantities, thus better revealing the link between problem structure and duality gap.

% Apps 
% AA: Mention finite sum minimization problems have lower computational complexity. We show that they also have better robustness to nonconvexity.
% AA: Cite Ale and Carlo's paper with circular regularization in multi-task.

Nonconvex separable problems involving finite sums such as~\eqref{eq:ncvx-pb} occur naturally in machine learning, signal processing and statistics. The most direct examples being perhaps empirical risk minimization and multi-task learning. In this later setting, our bounds show that when the number of tasks grows and the tasks are only loosely coupled (e.g. the separable $\ell_2$ constraint \citep{Cili17}), nonconvex multi-task problems have asymptotically vanishing duality gap. A stream of recent results have shown that finite sum optimization problems have particularly good computational complexity (see \citep{Roux12,John13,Defa14} and more recently \citep{Alle16,Redd16} in the nonconvex case), our results show that {\em they also have intrinsically low duality gap in some settings.}

%The simplest example being perhaps sparse recovery where one seeks to solve a problem written
%\[
%\min_{\{x,y:\,Ax-b=y\}} \|y\|_2^2 + \lambda \|x\|_0
%\]
%in the variables $x\in\reals^p$ and $y\in \reals^n$, where $p$ is typically much larger than $n$. This is usually relaxed as the Lasso problem
%\[
%\min_{\{x,y:\, Ax-b=y\}} \|y\|_2^2 + \lambda \|x\|_1
%\]
%and a naive bound on the gap between these two problems would be proportional to $p\lambda$ while the bound in \citet{Aubi76} directly bounds this gap by $n\lambda$, which is typically much smaller. We will see in what follows that this bound can be further reduced when the nontrivial part of the Lasso solution is in the interior of the epigraph.

% Paper plan
%The paper is organized as follows. In section~\ref{s:relax} we recall classical results on forming convex relaxations of problem~\eqref{eq:ncvx-pb}. In section~\ref{s:approx-sf} we derive an approximate version of the Shapley-Folkman theorem and use it to produce more flexible bounds on the duality gap in section~\ref{s:bnds-gap}. We extend these results to generic constrained, separable problems in section~\ref{s:bnds-const}.

\begin{figure}[h]
\begin{center}
\includegraphics[width=0.7\textwidth]{./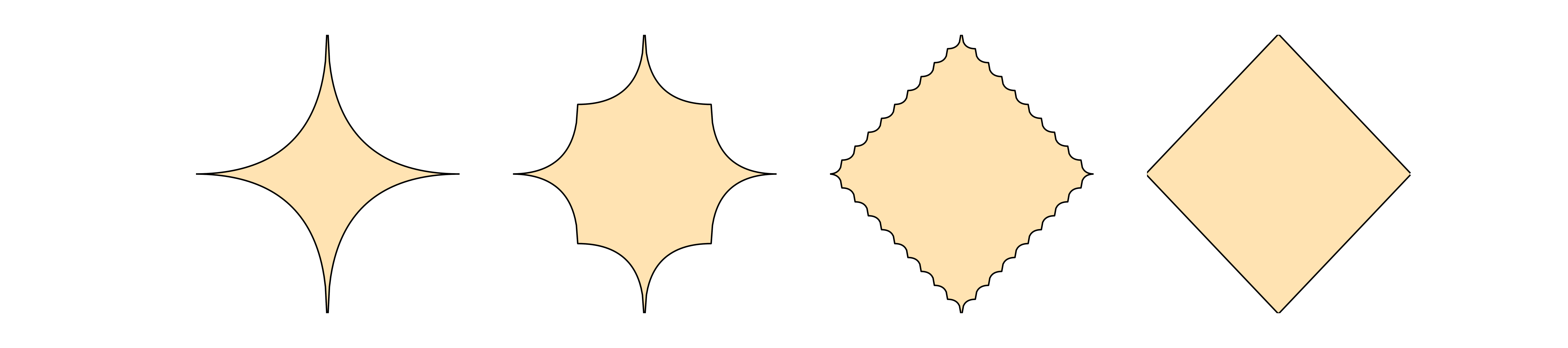}
\vskip 3ex
\begin{tabular}{ccccccccccc}
\raisebox{-0.4\height}{\includegraphics[scale=0.05]{./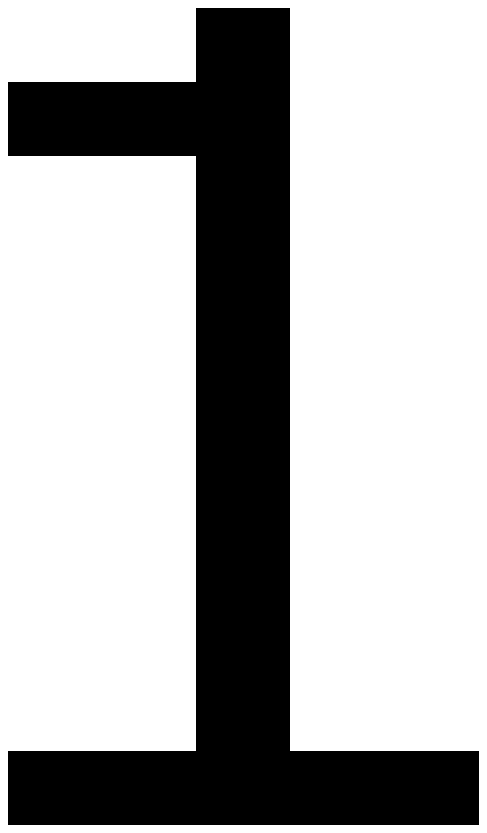}} &
+&
\raisebox{-0.4\height}{\includegraphics[scale=0.05]{./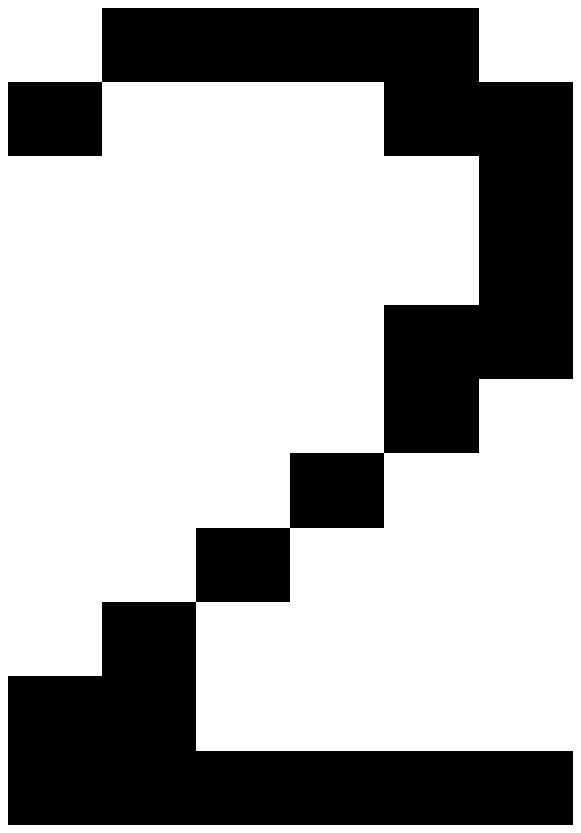}} &
+&
\raisebox{-0.4\height}{\includegraphics[scale=0.05]{./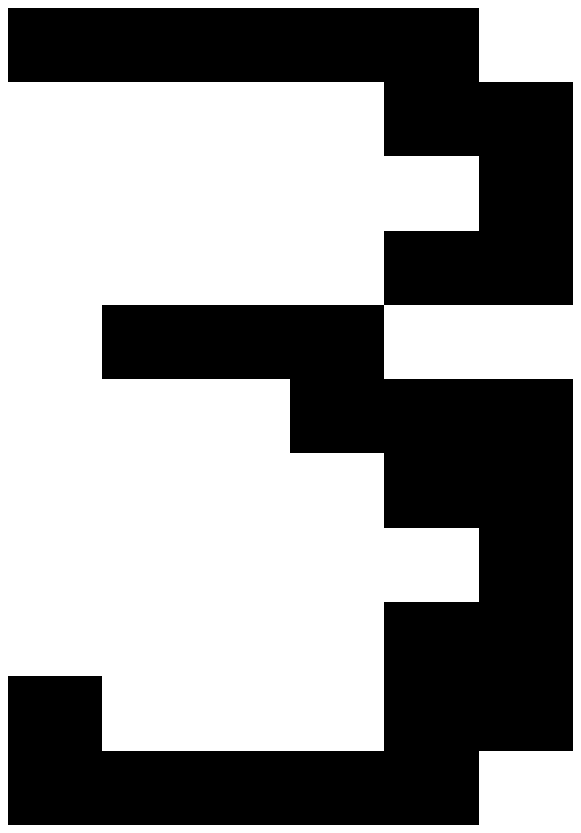}} &
+&
\raisebox{-0.4\height}{\includegraphics[scale=0.05]{./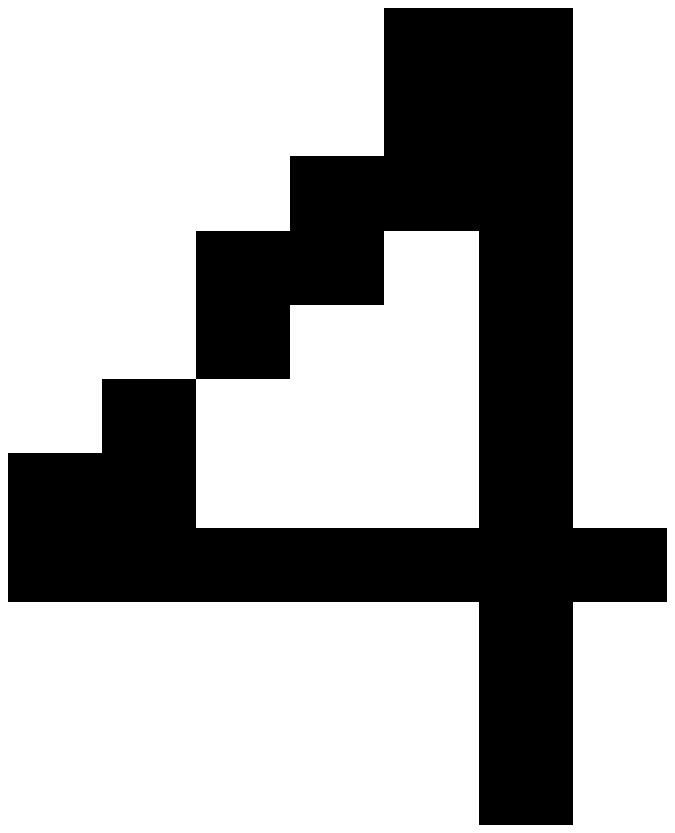}} &
+&
\raisebox{-0.4\height}{\includegraphics[scale=0.05]{./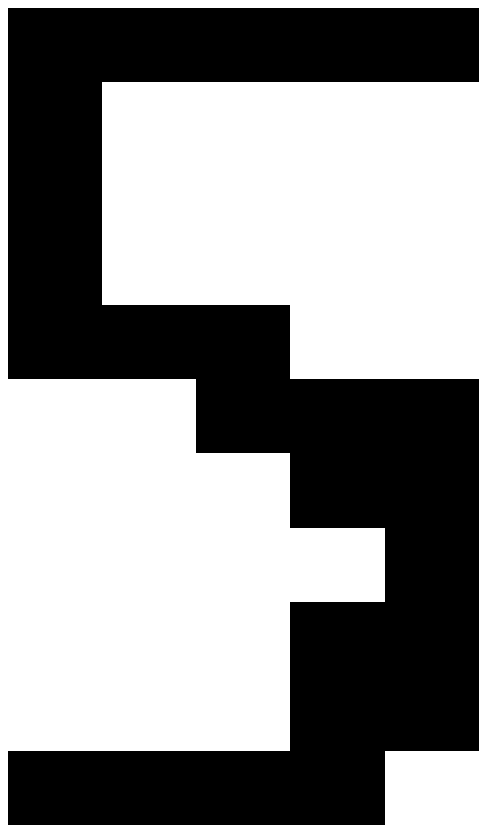}} &
=&
$5 \times$ \raisebox{-0.4\height}{\includegraphics[scale=0.07]{./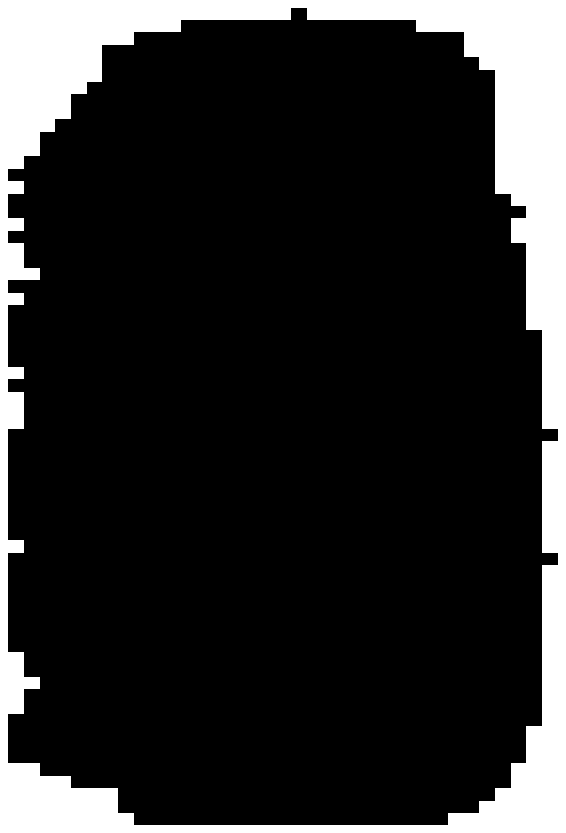}}
\end{tabular}
\end{center}
\caption{{\em Top:} The $\ell_{1/2}$ ball, Minkowsi average of two and ten balls, and convex hull. {\em Bottom:} Minkowsi average of five first digits (obtained by sampling). \label{fig:l12ball}}
\end{figure}

\section{Convex Relaxation and Bounds on the Duality Gap}\label{s:relax}
We first recall and adapt some key results from \citep{Aubi76,Ekel99} producing {\em a priori} bounds on the duality gap, using an epigraph formulation of problem~\eqref{eq:ncvx-pb}.

\subsection{Biconjugate and Convex Envelope}
Assuming that $f$ is not identically $+\infty$ and is minorized by an affine function, we write $f^*(y)\triangleq \inf_{x\in\dom f} \{y^{\top}x - f(x)\}$ the conjugate of $f$, and $f^{**}(y)$ its biconjugate. The biconjugate of $f$ (aka the convex envelope of $f$) is the pointwise supremum of all affine functions majorized by $f$ (see e.g. \citep[Th.\,12.1]{Rock70} or \citep[Th.\,X.1.3.5]{Hiri93}), a corollary then shows that $\epi(f^{**})=\overline{\Co(\epi(f))}$. For simplicity, we write $S^{**}=\overline{\Co(S)}$ for any set $S$ in what follows. We will make the following technical assumptions on the functions $f_i$.
% AA: briefly comment on this...
\begin{assumption}\label{as:fi}
The functions $f_i: \reals^{d_i} \rightarrow \reals$ are proper, 1-coercive, lower semicontinuous and there exists an affine function minorizing them.
\end{assumption}
Note that coercivity trivially holds if $\dom(f_i)$ is compact (since $f$ is $+\infty$ outside). When Assumption~\ref{as:fi} holds, $\epi(f^{**})$, $f_i^{**}$ and hence $\sum_{i=1}^{n} f_i^{**}(x_i)$ are closed \citep[Lem.\,X.1.5.3]{Hiri93}. Finally, as in e.g. \citep{Ekel99}, we define the lack of convexity of a function as follows.

\begin{definition}\label{def:rho}
Let $f: \reals^{d} \rightarrow \reals$ we let $\rho(f)\triangleq \sup_{x\in \dom(f)} \{f(x) - f^{**}(x)\}$.
\end{definition}

Many other quantities measure lack of convexity, see e.g. \citep{Aubi76,Bert14} for further examples. In particular,  the nonconvexity measure $\rho(f)$ can be further refined, using the fact that 
\[
\rho(f)=\sup_{\substack{x_i\in \dom(f)\\ \alpha\in\reals^{d+1}}}~\left\{ f\left(\sum_{i=1}^{d+1}\alpha_i x_i\right) -  \sum_{i=1}^{d+1}\alpha_i f(x_i): \ones^T\alpha=1,\alpha \geq 0\right\}
\]
when $f$ satisfies Assumption~\ref{as:fi} (see \citep[Th.\,X.1.5.4]{Hiri93}). In this setting, \citet{Bi16} define the $k^{th}$-nonconvexity measure as
\BEQ\label{def:rhok}
\rho_k(f)\triangleq\sup_{\substack{x_i\in \dom(f)\\ \alpha\in\reals^{d+1}}}~\left\{ f\left(\sum_{i=1}^{d+1}\alpha_i x_i\right) -  \sum_{i=1}^{d+1}\alpha_i f(x_i): \ones^T\alpha=1,\Card(\alpha)\leq k,\alpha \geq 0\right\}
\EEQ
which restricts the number of nonzero coefficients in the formulation of $\rho(f)$. Note that $\rho_1(f)=0$.

\subsection{Convex Relaxation}
We will now show that the dual of problem~\eqref{eq:ncvx-pb} maximizes a linear form over the convex hull of a Minkowski sum of $n$ epigraphs. We also show that this dual matches the dual of a convex relaxation of~\eqref{eq:ncvx-pb}, formed using the convex envelopes of the functions $f_i(x)$. In what follows, we will assume without loss of generality that $Y_i=\reals^{d_i}$, replacing $f_i$ by $f_i(x)+\mathbf{1}_{Y_i}(x)$. We use the biconjugate to produce a convex relaxation of problem~\eqref{eq:ncvx-pb} written
\BEQ\label{eq:cvx-pb}\tag{CoP}
\BA{ll}
\mbox{minimize} & \sum_{i=1}^{n} f_i^{**}(x_i) \\
\mbox{subject to} & Ax\leq b\\
\EA\EEQ
in the variables $x_i\in\reals^{d_i}$. Writing the epigraph of problem~\eqref{eq:ncvx-pb} as in \citep[\S5.3]{Boyd03} or \citep{Lema01},
\[
\mathcal{G}\triangleq \left\{(x,r_0,r) \in \reals^{d+1+m} :~ \sum_{i=1}^{n} f_i(x_i) \leq r_0,\,Ax-b \leq r \right\},
\]
and its projection on the last $m+1$ coordinates,
\BEQ\label{eq:epi}
\mathcal{G}_r\triangleq \left\{(r_0,r) \in \reals^{m+1} :~  (x,r_0,r)\in \mathcal{G}\right\},
\EEQ
we can write the Lagrange dual function of~\eqref{eq:ncvx-pb} as 
\BEQ\label{eq:dual-f}
\Psi(\lambda) \triangleq \inf\left\{ r_0 + \lambda^{\top}r:~ (r_0,r) \in \mathcal{G}_r^{**} \right\},
\EEQ
in the variable $\lambda\in\reals^m$, where $\mathcal{G}^{**}=\overline{\Co(\mathcal{G})}$ is the closed convex hull of the epigraph $\mathcal{G}$ (the projection being linear here, we have $(\mathcal{G}_r)^{**}=(\mathcal{G}^{**})_r=\mathcal{G}_r^{**}$). We need constraint qualification conditions for strong duality to hold in~\eqref{eq:cvx-pb} and we now recall the result in \citep[Th.\,2.11]{Lema01} which shows that because the explicit constraints are affine here, the dual functions of~\eqref{eq:ncvx-pb} and~\eqref{eq:cvx-pb} are equal. The (common) dual of~\eqref{eq:ncvx-pb} and~\eqref{eq:cvx-pb} is then
\BEQ\label{eq:d-pb}\tag{D}
\sup_{\lambda \geq 0} \Psi(\lambda)
\EEQ
in the variable $\lambda\in\reals^m$.  The following result shows that strong duality holds under mild technical assumptions.

\begin{theorem}{\citep[Th.\,2.11]{Lema01}}
The function $\Psi(\lambda)$ is also the dual function associated with~\eqref{eq:cvx-pb}. Assuming that $\Psi$ is not constant equal to $-\infty$ and that there is a feasible $x$ in the relative interior of $\dom\left(\sum_{i=1}^{n} f_i^{**}\right)$ then $\Psi$ attains its maximum and 
\[
\max_{\lambda} \Psi(\lambda) = \inf\left\{ \sum_{i=1}^{n} f_i^{**}(x_i):~ x\in\reals^d,\,Ax\leq b\right\}
\]
i.e. strong duality holds.
\end{theorem}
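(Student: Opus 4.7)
The plan is to split the theorem into three independent claims and handle each one separately: (i) the dual function $\Psi$ is unchanged when each $f_i$ is replaced by its biconjugate $f_i^{**}$, (ii) strong duality holds for the convex relaxation~\eqref{eq:cvx-pb}, and (iii) the supremum in~\eqref{eq:d-pb} is attained.

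For (i), I would exploit the separability of the Lagrangian. For any $\lambda \ge 0$,
\[
\Psi(\lambda) = -\lambda^\top b + \inf_{x\in\reals^d}\Bigl(\sum_{i=1}^n f_i(x_i) + \lambda^\top A x\Bigr) = -\lambda^\top b + \sum_{i=1}^n \inf_{x_i}\bigl(f_i(x_i) + c_i(\lambda)^\top x_i\bigr),
\]
where $c_i(\lambda)$ denotes the block of $A^\top\lambda$ corresponding to the $i$th variable. By definition of the conjugate, $\inf_{x_i}(f_i(x_i)+c_i^\top x_i) = -f_i^*(-c_i)$, and Assumption~\ref{as:fi} guarantees that $f_i$ is proper, lsc and minorized by an affine function, so $f_i^{**} = f_i^*{}^*$ and $\inf_{x_i}(f_i^{**}(x_i)+c_i^\top x_i) = -f_i^{***}(-c_i) = -f_i^*(-c_i)$. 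The two Lagrangian infima therefore coincide termwise, proving $\Psi$ is the dual function of both problems. Equivalently, in the epigraph picture, minimizing the affine functional $(r_0,r)\mapsto r_0+\lambda^\top r$ over $\mathcal{G}_r$ or over its closed convex hull $\mathcal{G}_r^{**}$ yields the same value, and $\mathcal{G}_r^{**}$ is precisely the epigraph associated with~\eqref{eq:cvx-pb}.

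For (ii), since~\eqref{eq:cvx-pb} is now a bona fide convex program with only affine inequality constraints, strong duality follows from a standard Fenchel--Rockafellar / partial Slater argument: when the explicit constraints are polyhedral, it suffices to exhibit a feasible point in the relative interior of $\dom(\sum_i f_i^{**})$, which is exactly the stated qualification. The $1$-coercivity and lower semicontinuity from Assumption~\ref{as:fi} ensure the convex hull of the epigraph is closed (as recalled just after Assumption~\ref{as:fi}), so that $\mathcal{G}_r^{**}$ already contains its boundary and no perturbation value function pathology arises. For (iii), under the same qualification the perturbation function $v(u)=\inf\{\sum_i f_i^{**}(x_i):Ax-b\le u\}$ is convex, proper and subdifferentiable at $u=0$, and any $\lambda\in-\partial v(0)\cap\reals_+^m$ is an optimal dual multiplier, giving attainment.

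The main delicate point, and the step I expect to be the real obstacle, is the joint use of closedness of $\mathcal{G}_r^{**}$ and the relative interior condition to guarantee both strong duality and dual attainment simultaneously. Without $1$-coercivity one could have $\epi(f_i^{**})$ closed while $\sum_i f_i^{**}$ fails to be lsc, in which case $v$ need not be lsc at $0$ and the dual maximum need not be attained. I would therefore spend the bulk of the proof verifying, via \citep[Lem.\,X.1.5.3]{Hiri93}, that Assumption~\ref{as:fi} transfers through the biconjugate and the finite sum, and then invoke the affine constraint qualification to close the argument.
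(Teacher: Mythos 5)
Your proposal is correct, but note that the paper contains no proof of this statement to compare against: the theorem is quoted verbatim, with attribution, from \citep[Th.\,2.11]{Lema01}, and the authors rely entirely on that reference. Your reconstruction is the standard one and is consistent with the cited source. Step (i) is the separable conjugacy computation $\inf_x\{f(x)+c^{\top}x\}=-f^*(-c)$ combined with $f^{***}=f^*$ (valid because Assumption~\ref{as:fi} makes each $f_i^*$ proper), which is exactly why the Lagrangian infima for~\eqref{eq:ncvx-pb} and~\eqref{eq:cvx-pb} coincide termwise; equivalently, a linear form has the same infimum over $\mathcal{G}_r$ and over $\mathcal{G}_r^{**}$, which is the geometric picture taken in \citep{Lema01}. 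Steps (ii)--(iii) are the polyhedral refinement of Slater's condition (e.g.\ \citep[Cor.\,28.2.2]{Rock70}): affine constraints plus a feasible point in the relative interior of $\dom\bigl(\sum_i f_i^{**}\bigr)$ yield a Kuhn--Tucker vector, hence zero gap and dual attainment, with $\Psi\not\equiv-\infty$ guaranteeing via weak duality that the primal value is finite. A small remark: all elements of $-\partial v(0)$ are automatically nonnegative because $v$ is nonincreasing in $u$, so intersecting with $\reals_+^m$ is superfluous.

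One point you flag as the ``real obstacle'' is actually a red herring. $f_i^{**}$ is a pointwise supremum of affine functions, hence always lsc, and it is everywhere $>-\infty$ thanks to the affine minorant in Assumption~\ref{as:fi}; a finite sum of such functions is lsc with no coercivity needed. So lower semicontinuity of $\sum_i f_i^{**}$ cannot fail in the way you describe, and neither zero gap nor dual attainment depends on $1$-coercivity --- they rest entirely on the affine constraint qualification. Where $1$-coercivity genuinely matters in this paper is downstream: it gives $\epi(f_i^{**})=\Co(\epi(f_i))$ without closure and closedness of the sets $\mathcal{F}_i$, which is what secures \emph{primal} attainment and the extreme-point argument in Proposition~\ref{prop:gap} and Proposition~\ref{prop:gap-sf}, not the duality statement at hand.
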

This last result shows that the convex problem~\eqref{eq:cvx-pb} indeed shares the same dual as problem~\eqref{eq:ncvx-pb}.

\subsection{Perturbations}\label{ss:pert}
In the next section, perturbed versions of problems~\eqref{eq:ncvx-pb} and~\eqref{eq:cvx-pb} will emerge to quantify our approximation bounds. These are written respectively
\BEQ\label{eq:p-ncvx-pb}\tag{pP}
\BA{rll}
\mathrm{h}_{P}(u) \triangleq &\mbox{min.} & \sum_{i=1}^{n} f_i(x_i) \\
&\mbox{s.t.} & Ax - b \leq u\\
&& x_i \in Y_i, \quad i=1,\ldots,n,
\EA\EEQ
in the variables $x_i\in\reals^{d_i}$, with perturbation parameter $u\in\reals^m$, and
\BEQ\label{eq:p-cvx-pb}\tag{pCoP}
\BA{rll}
\mathrm{h}_{CoP}(u) \triangleq &\mbox{min.} & \sum_{i=1}^{n} f_i^{**}(x_i) \\
&\mbox{s.t.} & Ax - b \leq u\\
\EA\EEQ
in the variables $x_i\in\reals^{d_i}$, with perturbation parameter $u\in\reals^m$.

\subsection{Bounds on the Duality Gap}\label{ss:sf}
We now recall results by \citep{Aubi76,Ekel99} bounding the duality gap in~\eqref{eq:ncvx-pb} using the lack of convexity of the functions $f_i$. In the formulation below, the dual is more explicit than in \citep{Ekel99} because the constraints are affine here.

\begin{proposition}\label{prop:gap}
Suppose the functions $f_i$ in~\eqref{eq:ncvx-pb} satisfy Assumption~\ref{as:fi}. There is a point $x^\star\in\reals^d$ at which the primal optimal value of~\eqref{eq:cvx-pb} is attained, such that
\BEQ\label{eq:gap-bnds}
\underbrace{\sum_{i=1}^{n} f_i^{**}(x^\star_i)}_{\ref{eq:cvx-pb}} ~\leq~ \underbrace{\sum_{i=1}^{n} f_i(\hat x^\star_i)}_{\ref{eq:ncvx-pb}} ~\leq~ \underbrace{\sum_{i=1}^{n} f_i^{**}(x^\star_i)}_{\ref{eq:cvx-pb}} ~+~ \underbrace{\sum_{i \in \mathcal{S}} \rho(f_i)}_\mathrm{gap}
\EEQ
with $\hat x^\star$ is an optimal point of~\eqref{eq:ncvx-pb}, and
\[
\mathcal{S}\triangleq \left\{ i:~ \left(f_i^{**}(x_i^\star), A_ix_i^\star\right) \notin \Ext(\mathcal{F}_i)\right\}
\]
where $\mathcal{F}_i\subset \reals^{m+1}$ is defined as
\[
\mathcal{F}_i = \left\{ \left(f_i^{**}(x_i), A_ix_i\right):~ x_i \in \reals^{d_i} \right\}
\]
writing $A_i \in \reals^{m \times d_i}$ the $i^{th}$ block of $A$.
\end{proposition}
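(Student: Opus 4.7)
The plan is to lift~\eqref{eq:cvx-pb} into $\reals^{m+1}$, express a candidate optimum as a Minkowski sum of $n$ elements of $\Co(\mathcal{F}_i)$, and apply the Shapley-Folkman theorem (as in~\citep{Aubi76,Ekel99}) to replace this candidate by another primal optimum of~\eqref{eq:cvx-pb} whose coordinates are extreme in $\Co(\mathcal{F}_i)$ for all but at most $m+1$ indices.

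Concretely, let $\tilde x$ be any optimum of~\eqref{eq:cvx-pb} with value $p^\star$. Because
\[
(p^\star, A\tilde x)=\sum_{i=1}^{n}\bigl(f_i^{**}(\tilde x_i), A_i\tilde x_i\bigr)\in\sum_{i=1}^{n}\Co(\mathcal{F}_i)=\Co\!\left(\sum_{i=1}^{n}\mathcal{F}_i\right),
\]
the refined Shapley-Folkman-Starr theorem applied in $\reals^{m+1}$ produces a decomposition $(p^\star, A\tilde x)=\sum_i(y_i^\star,z_i^\star)$ with $(y_i^\star,z_i^\star)\in\Co(\mathcal{F}_i)$ and with all but at most $m+1$ of them lying in $\Ext(\mathcal{F}_i)$; call this exceptional index set $\mathcal{S}$. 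Each $(y_i^\star,z_i^\star)\in\Co(\mathcal{F}_i)$ admits the parametrisation $(y_i^\star,z_i^\star)=(y_i^\star,A_ix_i^\star)$ for some $x_i^\star\in\reals^{d_i}$ with $y_i^\star\geq f_i^{**}(x_i^\star)$, and since $\sum_iy_i^\star=p^\star$ equals the minimum of~\eqref{eq:cvx-pb} on the feasible set, which contains $x^\star=(x_1^\star,\ldots,x_n^\star)$ thanks to $Ax^\star=A\tilde x\leq b$, these inequalities must be tight. Thus $y_i^\star=f_i^{**}(x_i^\star)$, $x^\star$ is also an optimum of~\eqref{eq:cvx-pb}, and $(f_i^{**}(x_i^\star),A_ix_i^\star)\in\Ext(\mathcal{F}_i)$ for every $i\notin\mathcal{S}$.

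The key geometric fact to verify is that extremality of $(f_i^{**}(x),A_ix)$ in $\Co(\mathcal{F}_i)$ forces $f_i(x)=f_i^{**}(x)$: any strict inequality $f_i^{**}(x)<f_i(x)$ yields, via the biconjugate representation in~\citep[Th.\,X.1.5.4]{Hiri93}, a decomposition $x=\sum_j\alpha_jw_j$ with $f_i^{**}(x)=\sum_j\alpha_jf_i(w_j)$, so that $(f_i^{**}(x),A_ix)=\sum_j\alpha_j(f_i(w_j),A_iw_j)$ presents this point as a nontrivial convex combination inside $\Co(\mathcal{F}_i)$, contradicting extremality. Hence for $i\notin\mathcal{S}$ we have $f_i(x_i^\star)=f_i^{**}(x_i^\star)$; for $i\in\mathcal{S}$ Definition~\ref{def:rho} gives $f_i(x_i^\star)\leq f_i^{**}(x_i^\star)+\rho(f_i)$. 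Summing and using the feasibility of $x^\star$ for~\eqref{eq:ncvx-pb} gives
\[
\sum_{i=1}^{n}f_i(\hat x_i^\star)\leq\sum_{i=1}^{n}f_i(x_i^\star)\leq\sum_{i=1}^{n}f_i^{**}(x_i^\star)+\sum_{i\in\mathcal{S}}\rho(f_i),
\]
the left inequality of~\eqref{eq:gap-bnds} being automatic since~\eqref{eq:cvx-pb} relaxes~\eqref{eq:ncvx-pb}. The main obstacle is the refined Shapley-Folkman step: the vanilla version only guarantees $(y_i^\star,z_i^\star)\in\mathcal{F}_i$ for the good indices, so one needs the Starr-type sharpening based on a conic Carath\'eodory argument, in which $m+1$ bounds the total number of nonextreme coordinates across the $n$ sets, in order to land in $\Ext(\mathcal{F}_i)$ rather than merely in $\mathcal{F}_i$.
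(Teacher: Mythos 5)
Your core argument is sound, and once stripped of the Shapley--Folkman scaffolding it coincides with the paper's own proof: existence of a minimizer of~\eqref{eq:cvx-pb} (which, note, you take for granted, whereas it needs the closedness of $\mathcal{G}_r^{**}$ and of the $\mathcal{F}_i$ guaranteed by Assumption~\ref{as:fi}), the observation that extremality of $(f_i^{**}(x_i^\star),A_ix_i^\star)$ forces $f_i(x_i^\star)=f_i^{**}(x_i^\star)$, the bound $f_i\leq f_i^{**}+\rho(f_i)$ on the remaining indices, and feasibility of $x^\star$ for~\eqref{eq:ncvx-pb}. However, you have misread the scope of the statement: it contains \emph{no} cardinality bound on $\mathcal{S}$, which is defined a posteriori from $x^\star$ as exactly the set of indices where extremality fails. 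Consequently \emph{any} optimal point of~\eqref{eq:cvx-pb} works, and no Shapley--Folkman step, refined or otherwise, is needed. The obstacle you flag as ``main'' (upgrading membership from $\mathcal{F}_i$ to $\Ext(\mathcal{F}_i)$ for the good indices) is thus irrelevant to this proposition; it belongs to the next result, Proposition~\ref{prop:gap-sf}, where $|\mathcal{S}|\leq m+1$ is claimed. There the paper sidesteps the Starr-type sharpening you call for by a Krein--Milman argument: it writes $\mathcal{G}_r^{**}=\sum_i\Co\left(\Ext(\mathcal{F}_i)\right)+(0,-b)+\reals_+^{m+1}$ and applies the vanilla Theorem~\ref{th:sf} with $V_i=\Ext(\mathcal{F}_i)$, so the good indices land in $\Ext(\mathcal{F}_i)$ automatically. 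So your proposal proves (modulo that one admitted hole) a stronger statement than asked, while the statement itself needs only the elementary part of your argument.

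One detail in your ``key geometric fact'' deserves a line, since it is the crux for both proofs. The atoms $(f_i(w_j),A_iw_j)$ are not a priori in $\mathcal{F}_i$, which is parametrized by $f_i^{**}$, not $f_i$. But from $\sum_j\alpha_jf_i(w_j)=f_i^{**}(x)\leq\sum_j\alpha_jf_i^{**}(w_j)$ (convexity of $f_i^{**}$) together with $f_i^{**}\leq f_i$ pointwise, you get $f_i(w_j)=f_i^{**}(w_j)$ for every active atom, so the combination does live in $\Co(\mathcal{F}_i)$. Nontriviality also requires ruling out the degenerate case where all active atoms coincide with $(f_i^{**}(x),A_ix)$ in $\reals^{m+1}$ (possible when $A_i$ has nontrivial kernel); in that case extremality is not contradicted, but one may replace $x_i^\star$ by such a $w_j$, which has the same $A_i$-image and the same $f_i^{**}$-value (preserving feasibility, optimality, and the set $\mathcal{S}$) and satisfies $f_i(w_j)=f_i^{**}(w_j)$. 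The paper's proof glosses this with the same one-line appeal to $\epi(f^{**})=\overline{\Co(\epi(f))}$, so this is a rigor-level remark shared by both treatments rather than a defect specific to yours.
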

\begin{proof}
Using \citep[Cor.\,A.6]{Lema01}, we know
\[
\mathcal{G}_r^{**} = \left\{(r_0, r) \in \reals^{m+1}:~ \sum_{i=1}^{n} f_i^{**}(x_i)\leq r_0,\, Ax-b \leq r\right\}.
\]
Since $\mathcal{G}_r^{**}$ is closed by construction and the sets $\mathcal{F}_i$ are closed by Assumption~\ref{as:fi}, there is a point %$x^\star\in \mathcal{G}_r^{**}$ 
$x^\star\in \mathbb{R}^d$ which attains the primal optimal value in~\eqref{eq:cvx-pb}. We write the corresponding minimizer of~\eqref{eq:dual-f} in $\mathcal{G}_r^{**}$ as
\BEQ\label{eq:zstar}
z^\star = \sum_{i=1}^n
\begin{pmatrix}
f_i^{**}(x^\star_i)\\
A_ix^\star_i
\end{pmatrix}
+
\begin{pmatrix}
0\\
w-b
\end{pmatrix}
\EEQ
with $w \in \reals^{m}_+$, which we summarize as
\[
z^\star = \sum_{i=1}^n z^{(i)} +
\begin{pmatrix}
0\\
w-b
\end{pmatrix},
\]
where $z^{(i)} \in \mathcal{F}_i$. Since $f_i^{**}(x)=f_i(x)$ when $x \in \Ext(\mathcal{F}_i)$ because $\epi(f^{**})=\overline{\Co(\epi(f))}$ when Assumption~\ref{as:fi} holds, we have
%\BEAS
%\overbrace{\sum_{i=1}^{n} f_i^{**}(x^\star_i)}^{\ref{eq:cvx-pb}} &=& \sum_{i\in [1,n] \setminus \mathcal{S}} f_i^{**}(x^\star_i) + \sum_{i\in \mathcal{S}} \sum_{j\in[1,m+2]} \lambda_{ij} f_i^{**}(x^\star_{ij}) \\
%& = & \sum_{i\in [1,n] \setminus \mathcal{S}} f_i(x^\star_i) + \sum_{i\in \mathcal{S}} \sum_{j\in[1,m+2]} \lambda_{ij} f_i^{**}(x^\star_{ij}) \\
%& \geq & \sum_{i\in [1,n] \setminus \mathcal{S}} f_i(x^\star_i) + \sum_{i\in \mathcal{S}} f_i^{**}\left(\tilde x _i \right)\\
%& \geq & \sum_{i\in [1,n] \setminus \mathcal{S}} f_i(x^\star_i) + \sum_{i\in \mathcal{S}} f_i\left(\tilde x _i \right) - \sum_{i\in \mathcal{S}} \rho(f_i)\\
%& \geq & \underbrace{\sum_{i=1}^{n} f_i(\hat x^\star_i)}_{\ref{eq:ncvx-pb}}  ~-~ \sum_{i\in \mathcal{S}} \rho(f_i)
%\EEAS

\BEAS
\overbrace{\sum_{i=1}^{n} f_i^{**}(x^\star_i)}^{\ref{eq:cvx-pb}} &=& \sum_{i\in [1,n] \setminus \mathcal{S}} f_i^{**}(x^\star_i) + \sum_{i\in \mathcal{S}}f_i(x^\star_i) + \sum_{i\in \mathcal{S}} f_i^{**}(x^\star_i) - f_i(x^\star_i) \\
& \geq &  \sum_{i\in [1,n] \setminus \mathcal{S}} f_i(x^\star_i) + \sum_{i\in \mathcal{S}}f_i(x^\star_i) ~-~ \sum_{i\in \mathcal{S}} \rho(f_i)\\
& \geq & \underbrace{\sum_{i=1}^{n} f_i(\hat{x}^\star_i)}_{\ref{eq:ncvx-pb}}  ~-~ \sum_{i\in \mathcal{S}} \rho(f_i)~.
\EEAS
The last inequality holds because $x^\star$ is feasible for~\eqref{eq:ncvx-pb}.%, i.e. 
%\[
%\sum_{i\in [1,n] \setminus \mathcal{S}} A_ix_i^\star + \sum_{i\in \mathcal{S}} \sum_{j\in[1,m+2]} \lambda_{ij} A_i  x_{ij}^\star \leq b,
%\]
%means that
%\[
%\sum_{i\in [1,n] \setminus \mathcal{S}} A_ix_i^\star + \sum_{i\in \mathcal{S}} A_i \tilde x_i \leq b,
%\]
%which yields the desired result.
\end{proof}

This last result bounds {\em a priori} the duality gap in problem~\eqref{eq:ncvx-pb} by 
\[
\sum_{i \in \mathcal{S}} \rho(f_i)
\]
where $\mathcal{S}\subset[1,n]$. The dual problem in~\eqref{eq:d-pb} shows that the optimal solution maximizes an affine form over the closed convex hull of the epigraph of the primal~\eqref{eq:ncvx-pb} and is thus attained at an extreme point of that epigraph. Separability means this epigraph is the Minkowski sum of the closed convex hulls of the epigraphs of the $n$ subproblems, while $|\mathcal{S}|$ counts the number of terms in this sum for which the optimum is attained at an extreme point of these subproblems. The Shapley-Folkman theorem together with the results of the next sections will produce upper bounds on the size of $\mathcal{S}$ and show that it is typically much smaller than $n$.

\section{The Shapley-Folkman Theorem}\label{s:sf}
Carath\'eodory's theorem is the key ingredient in proving the Shapley-Folkman theorem. We begin by recalling Carath\'eodory's result, and its conic formulation, which underpin all the other results in this section. 
\begin{theorem}[Carath\'eodory]
Let $V\subset \reals^n$, then $x\in \Co(V)$ if and only if
\[
x=\sum_{i=1}^{n+1} \lambda_i v_i
\]
for some $v_i\in V$, $\lambda_i\geq 0$ and $\ones^{\top}\lambda=1$.
\end{theorem}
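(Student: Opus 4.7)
The plan is to prove both directions of the equivalence. The reverse implication is immediate from the definition of the convex hull, since any combination $\sum_{i=1}^{n+1}\lambda_i v_i$ with $v_i\in V$, $\lambda_i\geq 0$ and $\ones^\top\lambda=1$ already belongs to $\Co(V)$.

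For the nontrivial direction, I would start from an arbitrary representation $x=\sum_{i=1}^k \lambda_i v_i$ with $k$ possibly large, $v_i\in V$, $\lambda_i>0$ and $\sum_{i=1}^k \lambda_i=1$, coming from the standard definition of $\Co(V)$ as the set of all finite convex combinations of its elements. If $k\leq n+1$ we are done; otherwise the goal is to produce another convex representation of $x$ using strictly fewer points of $V$, and then iterate.

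The reduction step is the crux. When $k>n+1$, the $k-1\geq n+1$ vectors $v_i-v_k$, $i=1,\ldots,k-1$, lie in $\reals^n$ and are therefore linearly dependent, so there exist scalars $\mu_1,\ldots,\mu_{k-1}$, not all zero, with $\sum_{i=1}^{k-1}\mu_i(v_i-v_k)=0$. Setting $\mu_k\triangleq -\sum_{i=1}^{k-1}\mu_i$ produces a nonzero vector $\mu\in\reals^k$ satisfying $\sum_{i=1}^k\mu_iv_i=0$ and $\sum_{i=1}^k\mu_i=0$. For every $t\in\reals$, the perturbed weights $\lambda(t)\triangleq\lambda+t\mu$ still satisfy $x=\sum_i\lambda_i(t)v_i$ and $\sum_i\lambda_i(t)=1$. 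Because $\mu$ sums to zero but is not identically zero, it has both a strictly positive and a strictly negative coordinate, so the set $\{t:\lambda(t)\geq 0\}$ is a closed bounded interval containing $0$ in its interior. Picking $t$ at either endpoint drives at least one coordinate of $\lambda(t)$ to zero while keeping all others nonnegative, reducing the number of active terms by at least one.

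Iterating this reduction terminates in finitely many steps with a convex representation using at most $n+1$ points of $V$. The only subtlety is the sign bookkeeping at the endpoint of the feasible interval, but this is immediate once one observes that the existence of both positive and negative $\mu_i$ guarantees a finite nonzero endpoint in each direction. No deeper machinery is needed: the whole argument is elementary linear algebra plus a one-parameter perturbation, and it also transparently yields the conic variant (dropping the constraint $\ones^\top\lambda=1$) which will be invoked to build up toward the Shapley--Folkman theorem in the sequel.
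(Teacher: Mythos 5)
Your proof is correct and complete. Note that the paper itself does not prove this statement: it merely recalls Carath\'eodory's theorem as a classical result (citing \citep[Cor.\,17.1.2]{Rock70} for the conic form), so there is no in-paper argument to compare against. What you give is the standard dimension-counting perturbation proof, and every step checks out: since $k-1>n$, the differences $v_i-v_k$ are linearly dependent; the resulting vector $\mu$ satisfies $\sum_i\mu_i v_i=0$ and $\ones^\top\mu=0$ with $\mu\neq 0$, hence has coordinates of both signs, so the feasibility set $\{t:\lambda+t\mu\geq 0\}$ is a compact interval containing $0$ in its interior (using $\lambda_i>0$), and moving to an endpoint annihilates at least one coordinate, so the iteration terminates with at most $n+1$ points. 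One small caveat on your closing aside about the conic variant: there the reduction uses linear dependence of the $v_i$ themselves (once $k>n$), with no zero-sum constraint on $\mu$, so $\mu$ need not change sign and the feasible interval $\{t:\lambda+t\mu\geq 0\}$ may be a half-line; one must then move $t$ toward whichever side has a finite endpoint (replacing $\mu$ by $-\mu$ if necessary), which still drives a coordinate to zero. That sign bookkeeping is the only difference from the affine case, and with it your remark that the argument ``transparently yields the conic variant'' used for Shapley--Folkman is accurate.
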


Similarly, if we write $\Po(V)$ the conic hull of $V$, with $\Po(V)=\{\sum_i \lambda_i v_i: v_i \in V, \lambda_i \geq 0$, we have the following result (see e.g. \citep[Cor.\,17.1.2]{Rock70}).

% AA: Fix indices using p,q?
\begin{theorem}[Conic Carath\'eodory]
Let $V\subset \reals^n$, then $x\in \Po(V)$ if and only if
\[
x=\sum_{i=1}^{n} \lambda_i v_i
\]
for some $v_i\in V$, $\lambda_i\geq 0$.
\end{theorem}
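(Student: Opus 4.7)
The ``if'' direction is immediate from the definition of $\Po(V)$. For the ``only if'' direction, I would start from an arbitrary conic representation $x=\sum_{i=1}^{N}\lambda_i v_i$ with $v_i\in V$, $\lambda_i\geq 0$ and $N$ finite (such a representation exists by definition of the conic hull), and then reduce $N$ to at most $n$ by a standard elimination argument.

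\noindent\textbf{Key step (dimension reduction).} Assume the representation is minimal in the number of strictly positive coefficients, and suppose for contradiction that this number exceeds $n$. Discard the terms with $\lambda_i=0$ so that, after relabeling, $\lambda_1,\dots,\lambda_N>0$ with $N>n$. Then $v_1,\dots,v_N$ are more than $n$ vectors in $\reals^n$, hence linearly dependent: there exist scalars $\mu_1,\dots,\mu_N$, not all zero, such that $\sum_{i=1}^N \mu_i v_i=0$. By negating $(\mu_i)$ if necessary, I may assume at least one $\mu_i>0$. For $t\geq 0$, set $\lambda_i(t)=\lambda_i - t\mu_i$, so that
\[
x=\sum_{i=1}^N \lambda_i(t)\, v_i \qquad \text{for every } t.
\]
Take $t^{\star}=\min\{\lambda_i/\mu_i : \mu_i>0\}$, attained at some index $i^{\star}$. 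Then $\lambda_i(t^{\star})\geq 0$ for all $i$, and $\lambda_{i^{\star}}(t^{\star})=0$, which yields a conic representation of $x$ with at most $N-1$ strictly positive coefficients, contradicting minimality.

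\noindent\textbf{Conclusion and wrap-up.} Hence the minimal $N$ is at most $n$; padding with zero coefficients gives the advertised representation $x=\sum_{i=1}^n \lambda_i v_i$ with $\lambda_i\geq 0$ and $v_i\in V$ (repeating a single $v\in V$ where necessary, which is harmless since $V$ is assumed non\-empty whenever $\Po(V)$ contains a nonzero point; the case $x=0$ is handled by choosing any $v\in V$ and $\lambda_i=0$).

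\noindent\textbf{Main obstacle.} There is essentially no obstacle beyond ensuring the bookkeeping is correct: the only subtle points are that the linear-dependence step requires $N>n$ (not $N\geq n+1$ with the ``$+1$'' of the affine Carathéodory), which is why the conic version shaves off one generator, and that one must argue the existence of \emph{some} finite representation to begin with, which is immediate from the definition of $\Po(V)$ as stated in the excerpt.
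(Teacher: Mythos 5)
Your proof is correct: the paper itself does not prove this statement, citing \citep[Cor.\,17.1.2]{Rock70} instead, and your elimination argument (take a minimal positive representation, use linear dependence of more than $n$ vectors in $\reals^n$ to drive one coefficient to zero while keeping the rest nonnegative) is precisely the standard proof given in that reference. The bookkeeping details you flag --- negating $(\mu_i)$ to get some $\mu_i>0$, the choice $t^{\star}=\min\{\lambda_i/\mu_i:\mu_i>0\}$, and padding with zero coefficients --- are all handled correctly.
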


The Shapley-Folkman theorem below was derived by Shapley \& Folkman in private communications and first published by \citep{Star69}. 
% AA: use same dimension letters.
\begin{theorem}[Shapley-Folkman]\label{th:sf}
Let $V_i \in\reals^d$, $i=1,\ldots,n$ be a family of subsets of $\reals^d$. If
\[
x\in\Co\left(\sum_{i=1}^n V_i\right) = \sum_{i=1}^n\Co\left(V_i\right)
\]
then
\[
x \in~ \sum_{[1,n]\setminus \mathcal{S}} V_i ~ + ~  \sum_{\mathcal{S}} \Co(V_i)
\]
where $|\mathcal{S}|\leq d$.
\end{theorem}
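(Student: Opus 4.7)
The plan is to deduce the Shapley-Folkman theorem from conic Carath\'eodory applied in a lifted space $\reals^d \times \reals^n$. The first, routine step is to verify the identity $\Co(\sum_i V_i) = \sum_i \Co(V_i)$: the inclusion $\supseteq$ is immediate because $\sum_i \Co(V_i)$ is convex and contains $\sum_i V_i$, while $\subseteq$ follows by expanding a convex combination of points $\sum_i v_i^{(k)}$ and grouping coordinates. So starting from $x\in\Co(\sum_i V_i)$ I may write $x=\sum_{i=1}^n x_i$ with each $x_i\in\Co(V_i)$.

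Next, I apply the classical Carath\'eodory theorem inside each convex hull to express $x_i=\sum_{j} \lambda_{ij} v_{ij}$, with finitely many $v_{ij}\in V_i$, $\lambda_{ij}\geq 0$, and $\sum_j \lambda_{ij}=1$. The key lifting trick is to encode the simplex constraints geometrically: define
\[
w_{ij}\triangleq (v_{ij},\,e_i)\in\reals^{d}\times\reals^{n},
\]
where $e_i$ is the $i$-th standard basis vector of $\reals^n$. Then by construction
\[
\sum_{i,j}\lambda_{ij} w_{ij} = \Bigl(\sum_i x_i,\; \sum_i e_i\Bigr) = (x,\ones),
\]
so $(x,\ones)$ belongs to the conic hull $\Po(\{w_{ij}\})$ in $\reals^{d+n}$.

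Now I invoke the conic Carath\'eodory theorem (stated just above the target): there exist nonnegative coefficients $\mu_{ij}\geq 0$, \emph{at most $d+n$ of which are nonzero}, such that $(x,\ones) = \sum_{i,j}\mu_{ij} w_{ij}$. Reading the last $n$ coordinates of this identity gives $\sum_j \mu_{ij}=1$ for every $i\in[1,n]$, which forces at least one nonzero $\mu_{ij}$ per index $i$, using up $n$ of the $d+n$ allowed nonzero coefficients. Let
\[
\mathcal{S}\triangleq\{\,i\in[1,n]:\ \#\{j:\mu_{ij}>0\}\geq 2\,\}.
\]
The remaining $d$ coefficients can create at most $d$ indices with multiple nonzero entries, so $|\mathcal{S}|\leq d$. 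For $i\notin\mathcal{S}$, a single $\mu_{ij}$ equals $1$ and hence $x_i=v_{ij}\in V_i$; for $i\in\mathcal{S}$, $x_i=\sum_j \mu_{ij} v_{ij}\in\Co(V_i)$. Summing over $i$ yields the desired representation $x\in\sum_{[1,n]\setminus\mathcal{S}} V_i + \sum_{\mathcal{S}}\Co(V_i)$.

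The only real subtlety is the bookkeeping in the last step: one must be careful to count nonzero coefficients, not distinct basis vectors $w_{ij}$, and to use the $n$ normalization constraints $\sum_j\mu_{ij}=1$ to subtract exactly $n$ from the $d+n$ bound. Everything else is formal; no compactness or closure assumption on the $V_i$ is needed, which is consistent with the statement.
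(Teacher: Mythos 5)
Your proof is correct and follows essentially the same route as the paper's: the same lifting of the simplex constraints via $z_{ij}=(v_{ij},e_i)\in\reals^{d+n}$, the same appeal to conic Carath\'eodory to get at most $d+n$ nonzero coefficients, and the same counting argument (one forced nonzero per block, at most $d$ spare) to bound $|\mathcal{S}|\leq d$. If anything, your write-up is slightly more careful than the paper's, since you also verify the identity $\Co\bigl(\sum_i V_i\bigr)=\sum_i\Co(V_i)$ and make the per-block bookkeeping explicit.
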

\begin{proof}
Suppose $x\in \sum_{i=1}^n\Co\left(V_i\right)$, then by Carath\'eodory's theorem we can write $x=\sum_{i=1}^n\sum_{j=1}^{d+1} \lambda_{ij} v_{ij}$ where $v_{ij} \in V_i$ and $\lambda_{ij}\geq 0$ with $\sum_{j=1}^{d+1}\lambda_{ij}=1$. These constraints can be summarized as
\BEQ\label{eq:sf-conic}
z=\sum_{i=1}^n\sum_{j=1}^{d+1} \lambda_{ij} z_{ij}
\EEQ
where $z\in\reals^{d + n}$ and
\[
z=
\begin{pmatrix}
x \\ \ones_{n}
\end{pmatrix}
,\quad
z_{ij}=
\begin{pmatrix}
v_{ij} \\ e_i
\end{pmatrix}
,\quad\mbox{for $i=1,\ldots,n$ and $j=1,\ldots,d+1$,}
\]
with $e_i \in \reals^n$ is the Euclidean basis. Since $z$ is a conic combination of the $z_{ij}$, there exist coefficients $\mu_{ij} \geq 0$ such that $z = \sum_{i=1}^n \sum_{j = 1}^{d + 1} \mu_{ij} z_{ij}$ and at most $d + n$ coefficients $\mu_{ij}$ are nonzero. Then, $\sum_{j=1}^{d+1}\mu_{ij}=1$ means that a single $\mu_{ij}=1$ for $i \in [1,n]\setminus \mathcal{S}$ where $|\mathcal{S}| \leq d$ (since $n+d$ nonzero coefficients are spread among $n$ sets, with at least one nonzero coefficient per set), and $\sum_{j=1}^{d+1} \mu_{i} v_{ij} \in V_{i}$ for $i \in [1,n]\setminus \mathcal{S}$.
\end{proof}

This theorem has been used, for example, to prove existence of equilibria in markets with a large number of agents with non-convex preferences. Classical proofs usually rely on a dimension argument \citep{Star69}, but the one we recalled here is more constructive. In what follows, we will use it to bound the duality gap in~\eqref{eq:gap-bnds}.

\section{Stable Bounds on the Duality Gap}\label{s:bnds-gap}
The Shapley-Folkman theorem above was used to produce a priori bounds on the duality gap in \citep{Aubi76}, see also \citep{Ekel99,Bert14,Udel16} for a more recent discussion. We first recall the following result, similar in spirit to that in \citep{Aubi76}.

\begin{proposition}\label{prop:gap-sf}
Suppose the functions $f_i$ in~\eqref{eq:ncvx-pb} satisfy Assumption~\ref{as:fi}. There is a point $x^\star\in\reals^d$ at which the primal optimal value of~\eqref{eq:cvx-pb} is attained, such that
\BEQ\label{eq:sf-bnds}
\underbrace{\sum_{i=1}^{n} f_i^{**}(x^\star_i)}_{\ref{eq:cvx-pb}} ~\leq~ \underbrace{\sum_{i=1}^{n} f_i(\hat x^\star_i)}_{\ref{eq:ncvx-pb}} ~\leq~ 
\underbrace{\sum_{i=1}^{n} f_i^{**}(x^\star_i)}_{\ref{eq:cvx-pb}} ~+~ \underbrace{~\sum_{i=1}^{m+1} \rho(f_{[i]})}_\mathrm{gap}
\EEQ
where $\hat x^\star$ is an optimal point of~\eqref{eq:ncvx-pb} and $\rho(f_{[1]})\geq \rho(f_{[2]}) \geq \ldots \geq \rho(f_{[n]})$.
\end{proposition}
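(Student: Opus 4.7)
The plan is to apply the Shapley-Folkman theorem (Theorem~\ref{th:sf}) to the Minkowski decomposition of the projected epigraph $\mathcal{G}_r$, viewed in the $(m+1)$-dimensional space $\reals^{m+1}$. Write $\mathcal{F}_i^{\mathrm{epi}} \triangleq \{(r_i, A_i x_i): r_i \geq f_i(x_i),\, x_i\in\reals^{d_i}\}\subset \reals^{m+1}$ and let $R\triangleq\{(t, s-b): t\geq 0,\, s\geq 0\}$ be the slack cone. Separability of~\eqref{eq:ncvx-pb} gives $\mathcal{G}_r = \sum_{i=1}^n\mathcal{F}_i^{\mathrm{epi}} + R$, and since $R$ is convex and closed, one has $\mathcal{G}_r^{**} = \sum_{i=1}^n (\mathcal{F}_i^{\mathrm{epi}})^{**} + R$. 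Under Assumption~\ref{as:fi}, $(\mathcal{F}_i^{\mathrm{epi}})^{**} = \{(r_i, A_i x_i): r_i \geq f_i^{**}(x_i)\}$ exactly.

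Let $x^\star$ attain the primal optimum $p^\star = \sum_i f_i^{**}(x_i^\star)$ of~\eqref{eq:cvx-pb}, whose existence comes from Assumption~\ref{as:fi} as in the proof of Proposition~\ref{prop:gap}. The point $z_0 \triangleq \sum_{i=1}^n (f_i^{**}(x_i^\star), A_i x_i^\star)$ lies in $\sum_i (\mathcal{F}_i^{\mathrm{epi}})^{**}$. The central step is to apply Theorem~\ref{th:sf} in $\reals^{m+1}$: this produces a subset $\mathcal{S}\subset[1,n]$ with $|\mathcal{S}|\leq m+1$ and a new representation $z_0 = \sum_{i=1}^n \tilde{z}_i$ such that $\tilde{z}_i \in \mathcal{F}_i^{\mathrm{epi}}$ for $i\notin \mathcal{S}$ and $\tilde{z}_i\in(\mathcal{F}_i^{\mathrm{epi}})^{**}$ for $i\in \mathcal{S}$. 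Writing $\tilde{z}_i = (\tilde{r}_i, A_i \tilde{x}_i)$, we thus obtain $\tilde{x}_i \in \reals^{d_i}$ with $\tilde{r}_i \geq f_i(\tilde{x}_i)$ off $\mathcal{S}$ and $\tilde{r}_i \geq f_i^{**}(\tilde{x}_i)$ on $\mathcal{S}$.

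Matching second blocks in $z_0 = \sum_i \tilde{z}_i$ then yields $A\tilde{x} = \sum_i A_i x_i^\star = A x^\star \leq b$, so $\tilde{x}$ is feasible for~\eqref{eq:ncvx-pb}. Using $f_i^{**} \leq f_i \leq f_i^{**} + \rho(f_i)$, the primal value at $\tilde{x}$ satisfies
\[
\sum_{i=1}^n f_i(\tilde{x}_i) \leq \sum_{i\notin \mathcal{S}} \tilde{r}_i + \sum_{i\in \mathcal{S}} \bigl(f_i^{**}(\tilde{x}_i) + \rho(f_i)\bigr) \leq p^\star + \sum_{i\in \mathcal{S}} \rho(f_i),
\]
and since $|\mathcal{S}|\leq m+1$ while the sequence $\rho(f_{[j]})$ is sorted in decreasing order, $\sum_{i\in\mathcal{S}}\rho(f_i) \leq \sum_{j=1}^{m+1} \rho(f_{[j]})$. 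Combined with optimality of $\hat{x}^\star$ for~\eqref{eq:ncvx-pb}, this gives the right inequality in~\eqref{eq:sf-bnds}; the left inequality is immediate since~\eqref{eq:cvx-pb} is a convex relaxation of~\eqref{eq:ncvx-pb}.

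I expect the main obstacle to lie in the bookkeeping of the Minkowski decomposition of $\mathcal{G}_r^{**}$ — in particular, justifying that the slack cone $R$ absorbs any topological boundary issues so that Theorem~\ref{th:sf} applies directly to the sum $\sum_i (\mathcal{F}_i^{\mathrm{epi}})^{**}$, and checking that the Shapley-Folkman representation of $z_0$ preserves the ``image of $\reals^{d_i}$'' structure of each block (which it does, since every element of $(\mathcal{F}_i^{\mathrm{epi}})^{**}$ has second coordinate of the form $A_i x_i$ for some $x_i\in\reals^{d_i}$, so a valid $\tilde x_i$ can always be extracted, even if it differs from $x_i^\star$).
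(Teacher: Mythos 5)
Your proof is correct, and it reaches the bound by a genuinely different execution of the key step than the paper. The paper decomposes $\mathcal{G}_r^{**}=\sum_i \mathcal{F}_i + (0,-b)+\reals_+^{m+1}$ using the \emph{graphs} $\mathcal{F}_i=\{(f_i^{**}(x_i),A_ix_i)\}$, invokes Krein--Milman to write each $\mathcal{F}_i$ as $\Co(\Ext(\mathcal{F}_i))$, applies Shapley--Folkman with $V_i=\Ext(\mathcal{F}_i)$ in $\reals^{m+1}$, and then concludes by plugging $|\mathcal{S}|\leq m+1$ into Proposition~\ref{prop:gap}, whose set $\mathcal{S}$ consists precisely of the non-extreme indices (off $\mathcal{S}$, extremality forces $f_i^{**}=f_i$). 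You instead apply Shapley--Folkman directly to the nonconvex epigraph images $\mathcal{F}_i^{\mathrm{epi}}$, using the identity $\Co(\mathcal{F}_i^{\mathrm{epi}})=\{(r,A_ix):r\geq f_i^{**}(x)\}$, and reconstruct an explicit feasible point $\tilde x$ rather than routing through Proposition~\ref{prop:gap}. Your route buys two things: it is self-contained, and it sidesteps the Krein--Milman step, which for the unbounded closed convex sets $\mathcal{F}_i$ actually requires recession-cone care that the paper elides; the paper's route buys the sharper structural statement of Proposition~\ref{prop:gap} (identifying $\mathcal{S}$ with non-extreme indices), of which Proposition~\ref{prop:gap-sf} is then a one-line corollary. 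One remark on the obstacle you flag at the end: you do not need the identity $\mathcal{G}_r^{**}=\sum_i(\mathcal{F}_i^{\mathrm{epi}})^{**}+R$ at all (which in general only holds after taking a closure of the right-hand side). Theorem~\ref{th:sf} only requires the single membership $z_0\in\sum_i\Co(\mathcal{F}_i^{\mathrm{epi}})$, and this is immediate term by term: under Assumption~\ref{as:fi}, 1-coercivity gives $\epi(f_i^{**})=\Co(\epi(f_i))$ without closure (as the paper itself uses in the proof of Theorem~\ref{th:gap-approx-bnd}), and convex hulls commute with the linear map $(r,x)\mapsto(r,A_ix)$, so $(f_i^{**}(x_i^\star),A_ix_i^\star)\in\Co(\mathcal{F}_i^{\mathrm{epi}})$ exactly. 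So the topological bookkeeping you were worried about dissolves, and your argument stands as written.
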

\begin{proof}
Notice that the closed convex hull $\mathcal{G}_r^{**}$ of the epigraph of problem~\eqref{eq:ncvx-pb} can be written as a Minkowski sum, with
\[
\mathcal{G}_r^{**}=\sum_{i=1}^n \mathcal{F}_i + (0,-b) + \reals_+^{m+1},
\quad
\mbox{where}
\quad
\mathcal{F}_i = \left\{ \left(f_i^{**}(x_i), A_ix_i\right):~ x_i \in \reals^{d_i} \right\}\subset\reals^{m+1}
\]
The Krein-Milman theorem shows 
\[
\mathcal{G}_r^{**}=\sum_{i=1}^n \Co\left( \Ext(\mathcal{F}_i)\right) + (0,-b) + \reals_+^{m+1} .
\]
Now, since $\mathcal{F}_i\subset\reals^{m+1}$, the Shapley Folkman Theorem~\ref{th:sf} shows that the point $z^\star\in\mathcal{G}_r^{**}$ in~\eqref{eq:zstar} satisfies
\[
z^\star\in \sum_{[1,n]\setminus \mathcal{S}} \Ext(\mathcal{F}_i) ~ + ~  \sum_{\mathcal{S}} \Co\left( \Ext(\mathcal{F}_i)\right)
\]
for some set $\mathcal{S}\subset[1,n]$ with $|\mathcal{S}|\leq m+1$. This means that we can take $|\mathcal{S}|\leq m+1$ in Proposition~\ref{prop:gap} and yields the desired result.
\end{proof}

The result above directly links the gap bound with the {\em number of nonzero coefficients in the conic combination} defining the solution $z^\star$ in~\eqref{eq:zstar}. The smaller this number, the tighter the gap bound. In fact,
if we use the $k^{th}$-nonconvexity measure $\rho_k(f)$ in~\eqref{def:rhok} instead of $\rho(f)$, the duality gap \citep{Bi16} bound can be refined to
\[
\mathrm{gap} \leq \max_{\beta_i\in[1,m+2]}~\left\{\sum_{i=1}^{n} \rho_{\beta_i}(f_i): \sum_{i=1}^n \beta_i = n+m+1 \right\}.
\]
Because $\rho_1(f)=0$, this last bound can be significantly smaller, since the result in \citep{Aubi76} implicitly assumes that $\sum_{i=1}^n \beta_i = n+(m+2)(m+1)$, instead of $n+m+2$ here.

Perhaps more importantly, remark that this bound is written in terms of  {\em unstable quantities}, namely the number of linear constraints in $Ax\leq b$ and the number of nonzero coefficients in the exact conic representation of $z^\star\in \mathcal{G}_r^{**}$. In the sections that follow, we will seek to further tighten this bound by both simplifying the coupling constraints to reduce $m$, and reducing the number of nonzero coefficients in the conic representation~\eqref{eq:sf-conic} using approximate versions of Carath\'eodory's theorem. 

\subsection{Approximate Carath\'eodory}
We can write a more stable version of the result of \citep{Aubi76} using approximate representations of the optimal solution in the Minkowski sum of epigraphs. Since the proof of Theorem~\ref{th:sf} hinges on Carath\'eodory's theorem, this will mean deriving sparser conic decompositions. The following generic result shows the impact of sparse approximate Carath\'eodory decompositions on the gap bound in~\eqref{eq:sf-bnds}, and we will seek to bound the size of these approximate representations in the sections that follow.

\begin{theorem}\label{th:gap-approx-bnd}
Suppose the functions $f_i$ in~\eqref{eq:ncvx-pb} satisfy Assumption~\ref{as:fi}. There is a point $x^\star\in\reals^d$ at which the primal optimal value of~\eqref{eq:cvx-pb} is attained, and as in~\eqref{eq:zstar} we let 
\[
z^\star = \sum_{i=1}^n
\begin{pmatrix}
f_i^{**}(x^\star_i)\\
A_ix^\star_i
\end{pmatrix}
+
\begin{pmatrix}
0\\
w-b
\end{pmatrix}
\]
with $w \in \reals^{m}_+$ be the corresponding minimizer in~\eqref{eq:dual-f}. Suppose that we use an approximate conic representation of $z^\star$ using only $s\in[n,n+m+1]$ coefficients, writing
\[
\lambda(s)=\argmin_{\substack{\lambda_{ij}\geq 0\\z_{ij}\in \mathcal{F}_i}}\left\{\left\| z^\star - \sum_{i=1}^n\sum_{j=1}^{m+2} \lambda_{ij} z_{ij}\right\|:~\sum_{i=1}^n \Card(\lambda_i)\leq s,~\ones^T\lambda_i=1, i=1,\ldots,n\right\}
\]
where $z_{ij}\in \mathcal{F}_i$ for $i=1,\ldots,n$, $j=1,\ldots,m+2$, and
$
u(s)=z^\star-\sum_{i=1}^n\sum_{j=1}^{m+2} \lambda_{ij}(s) z_{ij}.
$
with $u(s)=(u_1(s),u_2(s))$ for $u_1(s)\in\reals$ and $u_2(s)\in\reals^{m}$. We have the following bound on the solution of problem~\eqref{eq:p-ncvx-pb}
\BEQ\label{eq:approx-bnd}
\underbrace{\mathrm{h}_{CoP}(u_2(s))}_{\eqref{eq:p-cvx-pb}} ~\leq~ \underbrace{\mathrm{h}_{P}(u_2(s))}_{\eqref{eq:p-ncvx-pb}} ~\leq~ \underbrace{\mathrm{h}_{CoP}(0)}_{\eqref{eq:cvx-pb}} ~+~ \underbrace{|u_1(s)| +\max_{\beta_i\in[1,m+2]}~\left\{\sum_{i=1}^{n} \rho_{\beta_i}(f_i): \sum_{i=1}^n \beta_i = s \right\}}_{\mathrm{gap(s)}}.
\EEQ
Where $\mathrm{h}_{CoP}(\cdot)$ and $\mathrm{h}_{P}(\cdot)$ are defined in \S\ref{ss:pert} and $\rho_{k}(\cdot)$ is defined in~\eqref{def:rhok}. Furthermore, we can take $m$ to be the number of active inequality constraints at $x^\star$.
\end{theorem}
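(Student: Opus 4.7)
The plan is to handle the two inequalities of~\eqref{eq:approx-bnd} separately and construct an explicit primal candidate for the nontrivial direction. The left inequality $\mathrm{h}_{CoP}(u_2(s)) \leq \mathrm{h}_P(u_2(s))$ is immediate: problems~\eqref{eq:p-ncvx-pb} and~\eqref{eq:p-cvx-pb} share the same affine constraint, and $f_i^{**} \leq f_i$ pointwise, so the convex relaxation yields a valid lower bound at every perturbation level $u \in \reals^m$.

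For the right inequality, I would construct an explicit primal candidate from the approximate decomposition. Writing each $z_{ij} \in \mathcal{F}_i$ as $z_{ij} = (f_i^{**}(x_{ij}), A_i x_{ij})$ for a preimage $x_{ij}$, I would pick the $x_{ij}$ at extreme points of $\epi(f_i^{**})$, so that $f_i(x_{ij}) = f_i^{**}(x_{ij})$ under Assumption~\ref{as:fi} (a standard property of biconjugates at extreme points of the epigraph); this is the subtle step flagged as obstacle~(i) below. Define $\tilde x_i := \sum_j \lambda_{ij}(s) x_{ij}$ and let $\beta_i := \Card(\lambda_i(s)) \in [1, m+2]$, padded by trivial $\beta = 1$ singletons so $\sum_i \beta_i = s$. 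Definition~\eqref{def:rhok} applied block by block then yields $f_i(\tilde x_i) \leq \sum_j \lambda_{ij}(s) f_i(x_{ij}) + \rho_{\beta_i}(f_i) = \sum_j \lambda_{ij}(s) f_i^{**}(x_{ij}) + \rho_{\beta_i}(f_i)$. Summing over $i$ and identifying $\sum_{i,j}\lambda_{ij}(s) f_i^{**}(x_{ij})$ as the first coordinate of $\tilde z := \sum_{i,j}\lambda_{ij}(s) z_{ij} = z^\star - u(s)$, hence equal to $\sum_i f_i^{**}(x^\star_i) - u_1(s) = \mathrm{h}_{CoP}(0) - u_1(s)$, I would obtain $\sum_i f_i(\tilde x_i) \leq \mathrm{h}_{CoP}(0) + |u_1(s)| + \max_{\beta}\sum_i \rho_{\beta_i}(f_i)$, which is the objective half of the stated bound.

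To close the chain, I would verify $\tilde x$ is feasible for~\eqref{eq:p-ncvx-pb} at perturbation $u_2(s)$, so that $\mathrm{h}_P(u_2(s)) \leq \sum_i f_i(\tilde x_i)$. Reading off the last $m$ coordinates of $\tilde z = z^\star - u(s)$ gives $A\tilde x = z^\star_r - u_2(s)$; at the dual minimizer of~\eqref{eq:dual-f} the slack $w$ in~\eqref{eq:zstar} vanishes (minimizing $r_0 + \lambda^\top r$ with $\lambda \geq 0$ forces $w=0$), so $z^\star_r = A x^\star - b$, and combined with $Ax^\star \leq b$ a short computation extracts $A\tilde x - b \leq u_2(s)$, placing $\tilde x$ in the feasible set of $\mathrm{h}_P(u_2(s))$. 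The refinement that $m$ may be taken as the number of active constraints at $x^\star$ follows because inactive rows of $Ax \leq b$ carry zero multipliers in the dual and can be deleted from the Minkowski-sum representation $\mathcal{G}_r^{**} = \sum_i \mathcal{F}_i + (0,-b) + \reals_+^{m+1}$ without altering $z^\star$, so only active rows contribute to the Shapley--Folkman count bounding $|\mathcal{S}|$. The two main obstacles are: (i)~justifying that the preimages $x_{ij}$ from the argmin decomposition can be taken as extreme-point representers of $\epi(f_i^{**})$ without loss of generality, which is precisely the step that allows replacing $f_i(x_{ij})$ by $f_i^{**}(x_{ij})$ and turning the block bound into a first-coordinate identity for $\tilde z$; and (ii)~the sign bookkeeping in the feasibility step, which requires carefully tracking the shift $(0,-b)$ implicit in $\mathcal{G}_r^{**}$ against the convention chosen for $u_2(s)$. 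Once these two points are settled, the rest of the argument reduces to applying~\eqref{def:rhok} and the approximate conic representation already in place.
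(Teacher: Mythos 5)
Your proposal is correct and follows essentially the same route as the paper's proof: both construct the candidate $\tilde x_i = \sum_j \lambda_{ij}(s)\, x_{ij}$ from the approximate conic representation, use $f_i(x) = f_i^{**}(x)$ at extreme points of $\mathcal{F}_i$ to identify the first coordinate of the approximation $\sum_{i,j}\lambda_{ij}(s) z_{ij}$ with $\mathrm{h}_{CoP}(0) - u_1(s)$, bound the convexification error by the nonconvexity measure, and verify feasibility of $\tilde x$ for~\eqref{eq:p-ncvx-pb} at perturbation $u_2(s)$. The two obstacles you flag are genuine but are glossed over in the paper as well (its displayed chain even invokes Jensen's inequality on $f_i^{**}$ together with $\rho(f_i)$ rather than the $\rho_{\beta_i}(f_i)$ appearing in the statement, which your block-by-block application of~\eqref{def:rhok} with padding to $\sum_i \beta_i = s$ recovers more directly, and its sign bookkeeping for $u_2(s)$ against the shift $(0,w-b)$ is no more careful than yours), so your version is, if anything, slightly tighter on both points.
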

\begin{proof}
Let
$\bar z = \sum_{i=1}^n\sum_{j=1}^{m+2} \lambda_{ij}(s) z_{ij}$. By construction, this point satisfies
\[
\sum_{i=1}^n z^{(i)}_1 = \sum_{i=1}^{n} \bar z^{(i)}_1  + u_1(s) = \sum_{i=1}^{n} f_i^{**}(x_i) + u_1(s), 
\quad
\mbox{and}
\quad
\sum_{i=1}^n \bar z^{(i)}_{[2,m+1]} - b \leq u_2(s),
\]
where $z^{(i)}_{[2,m+1]}=A_ix_i^\star$. Since $f_i^{**}(x)=f_i(x)$ when $x \in \Ext(\mathcal{F}_i)$ because $\epi(f^{**})={\Co(\epi(f))}$ when Assumption~\ref{as:fi} holds, we have
\BEAS
\overbrace{\sum_{i=1}^{n} f_i^{**}(x^\star_i)}^{\ref{eq:cvx-pb}} &=& \sum_{i\in [1,n] \setminus \mathcal{S}} f_i^{**}(x_i) + \sum_{i\in \mathcal{S}} \sum_{j\in[1,m+2]} \lambda_{ij} f_i^{**}(x_{ij}) + u_1(s) \\
& = & \sum_{i\in [1,n] \setminus \mathcal{S}} f_i(x_i) + \sum_{i\in \mathcal{S}} \sum_{j\in[1,m+2]} \lambda_{ij} f_i^{**}(x_{ij}) + u_1(s) \\
& \geq & \sum_{i\in [1,n] \setminus \mathcal{S}} f_i(x_i) + \sum_{i\in \mathcal{S}} f_i^{**}\left(\tilde x _i \right)+ u_1(s) \\
& \geq & \sum_{i\in [1,n] \setminus \mathcal{S}} f_i(x_i) + \sum_{i\in \mathcal{S}} f_i\left(\tilde x _i \right) - \sum_{i\in \mathcal{S}} \rho(f_i)+ u_1(s) \\
& \geq & \underbrace{\sum_{i=1}^{n} f_i( x_i)}_{\ref{eq:p-ncvx-pb}}  ~-~ \sum_{i\in \mathcal{S}} \rho(f_i) + u_1(s) 
\EEAS
calling $\tilde x _i =\sum_{j\in[1,m+2]} \lambda_{ij} x_i$, where $\lambda_{ij}\geq 0$ and $\sum_j \lambda_{ij}=1$. The last inequality holds because the points $\tilde x_i$ are feasible for~\eqref{eq:p-ncvx-pb} with perturbation $u_2(s)$, i.e. 
\[
\sum_{i\in [1,n] \setminus \mathcal{S}} A_ix_i + \sum_{i\in \mathcal{S}} \sum_{j\in[1,m+2]} \lambda_{ij} A_i  x_{ij} \leq b + u_2(s),
\]
means that
\[
\sum_{i\in [1,n] \setminus \mathcal{S}} A_ix_i + \sum_{i\in \mathcal{S}} A_i \tilde x_i \leq b + u_2(s),
\]
which yields the desired result.
\end{proof}

The structure of this last bound differs from the previous ones because the perturbation $u$ is acting on the epigraph formulation of~\eqref{eq:p-ncvx-pb}, so it induces an error on both the objective value (the first coefficient $u_1(s)$ in this epigraph representation) and the constraints (the last $m$ coefficients $u_2(s)$). This means that we now bound the gap on a perturbed version \eqref{eq:p-ncvx-pb} of problem~\eqref{eq:ncvx-pb}, with constraint perturbation size controlled by~$u_2$. In Section~\ref{s:approx-sf} below, we will derive several approximate Carath\'eodory results to limit the size of conic representation to minimize the gap in~\eqref{eq:approx-bnd} by reducing $s$ while minimize the size of the error term $u$.

\subsection{Coupling constraints}
The tightness of the duality gap bound in~\eqref{eq:approx-bnd} depends on two distinct quantities. The first, namely $u$ discussed above, is a function of how much we can ``compress'' the convex approximation of $z^\star$ in~\eqref{eq:zstar}. The second, controlled by the sum of the nonconvexity measures $\rho_{\beta_i}(f_i)$ reads
\[
\left\{\sum_{i=1}^{n} \rho_{\beta_i}(f_i): \sum_{i=1}^n \beta_i = s \right\}
\]
and measures the severity of the problem's lack of convexity. As remarked by \citep{Udel16}, we can actually take $m$ to be the number of {\em active} constraints at the optimum of problem~\eqref{eq:ncvx-pb}, which can be substantially smaller than $m$ but is hard to bound a priori. 

The sparsity parameter $s$ above controls the tradeoff between these two components to minimize the bound, and is bounded by $n$ plus the number of active constraints. This means that another way to tighten the bound in~\eqref{eq:approx-bnd} is to reduce the number of constraints used to represent the feasible set. This number is usually a function of the {\em representation} of the feasible set, and can often be significantly reduced by transforming or approximating these representations. The results in Section~\ref{s:coupling} below will seek to make this tradeoff and all the quantities involved more explicit.

\section{Coupling Constraints}\label{s:coupling}
The duality gap bounds in~\eqref{eq:sf-bnds} or \eqref{eq:approx-bnd} heavily depend on the structure of the coupling constraints $Ax\leq b$ and exploiting this structure can lead to significant precision gain as detailed in what follows. As noticed by \citep{Udel16}, it suffices to consider only active constraints at the optimum when computing the duality gap bound in~\eqref{eq:sf-bnds} or \eqref{eq:approx-bnd}. This number can be significantly smaller than $m$. In particular, \citep[Th.\,2]{Cala05} or \citep[Lem.\,5.31]{Shap09} for example show $m\leq d$ using Helly's theorem. Bounds on the number of active constraints play a key role in solving chance constrained problems for example \citep{Cala05,Temp12,Zhan15}. Let us write
\[
A_Ix\leq b_I
\]
the equations corresponding to active constraints at the optimum, where $b_I\in\reals^{\tilde m}$. We will see below that we can further reduce the number of inequalities defining the feasible set by changing its representation or sampling them.

\subsection{Extended formulations}
The duality gap bounds in~\eqref{eq:sf-bnds} are written in terms of the number of linear constraints $Ax\leq b$ in problem~\eqref{eq:ncvx-pb}. These constraints form a polyhedron $\mathcal{P}$ and the gap bound heavily depends on the {\em representation} of this polytope. Producing a more compact formulation of $\mathcal{P}$, i.e. one using less linear inequalities, would then make our duality gap bounds much more precise. One way to produce such compact representations is to use extended formulations.

An {\em extended formulation} of the constraint polytope 
\[
\mathcal{P}=\{x\in\reals^d:Ax\leq b\}
\] 
writes it as the projection of another, potentially simpler, polytope with
\[
\mathcal{P}=\{x\in\reals^d: Bx+Cu \leq d,  u\in\reals^m\}
\]
where $B\in\reals^{q\times d}$, $C\in\reals^{q \times m}$ and $d\in\reals^{q}$. Overall, this means that we can replace $m$ in Proposition~\ref{prop:gap-sf} and Theorem~\ref{th:gap-approx-bnd} by the smallest representation size $q$ of the polytope formed by the active constraints, which can be substantially smaller than $m$ (but rarely tractable). Note however that this minimum $q$ is not the classical extension complexity of $\mathcal{P}$ discussed in the appendix because the need to keep all terms in the objective decoupled prevents us from solving and simplifying away equality constraints.

\subsection{Constraint sampling}
The results in \citep[Th.\,2]{Cala05} or \citep[Lem.\,5.31]{Shap09} show that there is an optimal solution to~\eqref{eq:cvx-pb} where at most $d+1$ constraints are active. Bounding the number of active constraints then yields better bounds on the duality gap using the results in \citep{Udel16}. Here, we show that if we allow a small approximation error from sampling constraints, we can push the number of active ones  below $d+1$, thus further improving duality gap bounds from the Shapley Folkman theorem.

The sampling results in \citep[Th.\,2]{Cala05} or \citep[Lem.\,5.31]{Shap09} use Helly's theorem to bound the number of actives constraints. We first recall a recent result by \citep{Adip19} on an approximate version of Helly's theorem.

\begin{theorem}[\citep{Adip19}]\label{th:app-helly}
Assume $K_1,\ldots,K_n$ are convex sets in an Euclidean space $\reals^d$ and let $0<k\leq n$. For $J\subset [1,n]$ define $K_J = \cap_{j\in J} K_j$. If the Euclidean unit ball $B(b,\alpha)$ centered at $b\in\reals^d$ with radius $\alpha \geq 0$ intersects $K_J$ for every $J\subset [1,n]$ with $|J|=k$, then there is a point $q\in\reals^d$ such that
\BEQ\label{eq:app-helly}
d(q,K_i)\leq \alpha \sqrt{\frac{m-k}{k(m-1)}},\quad \mbox{for $i=1,\ldots,n$}
\EEQ
where $m=\min\{n,d+1\}$.
\end{theorem}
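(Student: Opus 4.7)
The plan combines a minimax / Carath\'eodory setup with an averaging inequality over $k$-subsets, extracting the quantitative bound via Cauchy-Schwarz.

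After translating so $b=0$, consider the convex function $R(x) = \max_i d(x, K_i)$ on $\reals^d$. The hypothesis applied to any $k$-subset containing a fixed index $i$ gives $K_i \cap B(0,\alpha)\neq\emptyset$ (since $K_J \subset K_i$), so $R(0) \leq \alpha$. Let $q$ minimize $R$ and set $r = R(q) \leq \alpha$; this $q$ will be the witness point of the theorem. Let $I = \{i : d(q, K_i) = r\}$ be the active set and, for $i \in I$, put $u_i = (P_{K_i}(q)-q)/r$, a unit vector. Subgradient optimality for the convex $R$ at $q$ reads $0 \in \mathrm{conv}\{-u_i : i \in I\}$, and Carath\'eodory in $\reals^d$ supplies weights $\lambda_i > 0$, summing to one, supported on a subset (still called $I$) of cardinality $s \leq \min(n, d+1) = m$, such that $\sum_i \lambda_i u_i = 0$.

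The argument splits on $s$. If $s < k$, extend $I$ to a $k$-subset $J \subset [1,n]$; the hypothesis delivers $y \in B(0,\alpha) \cap K_J \subset K_I$, the projection inequality gives $\langle u_i, y - q\rangle \geq r$ for each $i \in I$, and the $\lambda$-weighted sum collapses to $0 \geq r$, forcing $r = 0$. In the main case $k \leq s \leq m$, for each $k$-subset $J \subset I$ pick $y_J \in B(0,\alpha) \cap K_J$; the projection inequality gives $\langle u_i, y_J \rangle \geq c_i := r + \langle u_i, q\rangle$ for $i \in J$, and $\sum_i \lambda_i u_i = 0$ implies $\sum_i \lambda_i c_i = r$. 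Multiplying by $\lambda_i$, summing over $i \in J$, and applying Cauchy-Schwarz with $\|y_J\| \leq \alpha$ yields, for every $J$,
\[
\alpha^2 \Bigl\| \sum_{i \in J} \lambda_i u_i \Bigr\|^2 \ \geq \ \Bigl( \sum_{i \in J} \lambda_i c_i \Bigr)^2.
\]

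Summing these $\binom{s}{k}$ inequalities, the left-hand side collapses via $\sum_i \lambda_i u_i = 0$ (equivalently, $\sum_{i \neq j}\lambda_i\lambda_j \langle u_i, u_j\rangle = -\sum_i \lambda_i^2$) to $\alpha^2 \binom{s-2}{k-1} \sum_i \lambda_i^2$; expanding the right-hand side via the incidence matrix of the $J$'s gives $\binom{s-2}{k-1}\sum_i \lambda_i^2 c_i^2 + \binom{s-2}{k-2} r^2$. In the extremal configuration $\lambda_i = 1/s$, $\langle u_i, u_j\rangle = -1/(s-1)$, $\langle u_i, q\rangle = 0$ (so $c_i = r$), this simplification combined with the binomial identity $s\binom{s-2}{k-2} + \binom{s-2}{k-1} = \binom{s-2}{k-1}\,k(s-1)/(s-k)$ produces $r^2 \leq \alpha^2(s-k)/(k(s-1))$. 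Since $(s-k)/(k(s-1)) = \tfrac{1}{k}(1 - \tfrac{k-1}{s-1})$ is increasing in $s$ and $s \leq m$, this upgrades to $r^2 \leq \alpha^2(m-k)/(k(m-1))$.

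The main obstacle is the last step for arbitrary weights $\lambda_i$: to recover the sharp constant one needs to lower-bound $\sum_i \lambda_i^2 c_i^2$ by $r^2\sum_i \lambda_i^2$, which is not a direct Cauchy-Schwarz since $\sum_i \lambda_i c_i = r$ rather than $\sum_i \lambda_i^2 c_i = r$. The resolution is either a symmetrization argument reducing to the regular-simplex extremal configuration, or, equivalently, replacing the uniform weights $1/\binom{s}{k}$ on the $J$'s by $\prod_{i \in J}\lambda_i$ in the averaging, so that elementary symmetric polynomial identities reproduce the $\binom{s-2}{k-1}$-style simplification automatically.
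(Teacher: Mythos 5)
The paper does not actually prove this statement: Theorem~\ref{th:app-helly} is imported verbatim from \citep{Adip19} and used as a black box, so your attempt can only be judged on its own merits. Your setup is the natural one and is correctly executed as far as it goes: minimizing the convex function $R(x)=\max_i d(x,K_i)$, extracting from $0\in\partial R(q)$ unit vectors $u_i=(P_{K_i}(q)-q)/r$ and convex weights with $\sum_i\lambda_iu_i=0$ supported on $s\le\min\{n,d+1\}$ indices via Carath\'eodory, the projection inequality $\langle u_i,y-q\rangle\ge r$ for $y\in K_i$, and the clean collapse $r=0$ when $s<k$. Your algebra in the extremal configuration ($\lambda_i=1/s$, $\langle u_i,u_j\rangle=-1/(s-1)$, $c_i=r$) is also correct, including the binomial identity and the monotonicity of $(s-k)/(k(s-1))$ in $s$ that upgrades $s$ to $m$.

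However, the obstacle you concede in your last paragraph is the entire theorem, and neither proposed patch closes it. Carrying your averaging through for general $\lambda$ (in either the squared or the linear form, after killing the unknown affine term via $\sum_J v_J=0$ and applying Cauchy--Schwarz over the $\binom{s}{k}$ subsets) yields $r\le\alpha\sqrt{\tfrac{s(s-k)}{k(s-1)}\sum_i\lambda_i^2}$; since $\sum_i\lambda_i^2\ge 1/s$ with equality \emph{only} at uniform weights, this recovers the stated constant precisely at the extremal configuration and degrades in the wrong direction for skewed weights (the best generic bound, using $\lambda_{\max}\le 1/2$ forced by $\sum_i\lambda_iu_i=0$, loses a factor up to $\sqrt{s/2}$). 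So ``symmetrization to the regular simplex'' is not a reduction you can invoke --- the inequality chain is genuinely lossy off the symmetric point and some new idea is needed for non-uniform subgradient weights. Your second fix fails structurally: with $w_J=\prod_{i\in J}\lambda_i$ one gets $\sum_J w_Jv_J=\sum_i\lambda_i^2\,e_{k-1}(\lambda_{-i})\,u_i\neq 0$ in general (here $e_{k-1}$ is the elementary symmetric polynomial in the remaining weights), which destroys the only mechanism you had for eliminating the unknown linear term $\langle v_J,b\rangle$. Two smaller flaws: squaring $\alpha\|v_J\|\ge\sum_{i\in J}\lambda_ic_i$ is invalid when the right-hand side is negative, which your normalization $b=0$ does not rule out since $c_i=r+\langle u_i,q\rangle$ (translating $q=0$ instead forces $c_i=r>0$, but then the unknown reappears as $\|y_J\|\le\|b\|+\alpha$); and the minimizer of $R$ need not exist ($R$ can fail to be coercive), so the variational step needs an approximate-minimizer or compactness argument. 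In short: setup and extremal verification are fine, but the sharp constant for arbitrary weights --- exactly where \citep{Adip19} do the real work --- is missing.
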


Using this approximate Helly-type result, we now show the following result on constraint sampling for generic convex optimization problems.
\begin{theorem}\label{th:samp}
Consider
\BEQ\label{eq:cvx_gen-samp}
\BA{ll}\mbox{minimize} & f_0(x)\\
\mbox{subject to} & f_i(x)\leq 0,\quad \mbox{for $i=1,\ldots,n$,}
\EA\EEQ
in the variable $x\in\reals^d$, where the functions $f_i$ are Lipschitz continuous with constant $L_i$ with respect to the Euclidean norm. Let $0<k\leq n$, and for $J\subset [1,n]$ write
\BEQ\label{eq:cvx-samp}
\BA{rll} x_J \triangleq & \mbox{min.} & f_0(x)\\
& \mbox{s.t.} & f_j(x)\leq 0,\quad \mbox{for $j \in J$,}
\EA\EEQ
then 
\BEQ\label{eq:const-samp}
f_0(x_J) \geq f_0(x^\star) - \left(L_0+\sum_{i=1}^n \lambda_iL_i\right)\frac{D}{2}\sqrt{\frac{m-k}{k(m-1)}}
\EEQ
where $m=\min\{n,d+1\}$, $D=\diam\left(\{x\in\reals^d:f_0(x)\leq f_0(x^\star)\}\right)$ and $(x^\star,\lambda)$ are primal dual solutions to problem~\eqref{eq:cvx_gen-samp}.
\end{theorem}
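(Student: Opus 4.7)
The plan is to combine strong duality for~\eqref{eq:cvx_gen-samp} at the primal-dual optimal pair $(x^\star,\lambda)$ with the approximate Helly theorem (Theorem~\ref{th:app-helly}) applied to the constraint sets $K_i:=\{x\in\reals^d:f_i(x)\leq 0\}$, and then to transfer the resulting near-feasibility estimate through the Lipschitz constants. Strong duality gives $L(x,\lambda):=f_0(x)+\sum_{i=1}^n\lambda_if_i(x)\geq f_0(x^\star)$ for every $x$, with equality at $x^\star$; and $L(\cdot,\lambda)$ is Lipschitz with constant $L_{\mathrm{tot}}:=L_0+\sum_i\lambda_iL_i$. Since~\eqref{eq:cvx-samp} is a relaxation of~\eqref{eq:cvx_gen-samp}, $f_0(x_J)\leq f_0(x^\star)$, so both $x_J$ and $x^\star$ lie in the convex set $\{f_0\leq f_0(x^\star)\}$ of diameter $D$; in particular $\|x_J-x^\star\|\leq D$.

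First I would apply Theorem~\ref{th:app-helly} to $\{K_i\}_{i=1}^n$ centered at the midpoint $b:=(x_J+x^\star)/2$ with radius $\alpha:=D/2$. Because $x^\star$ is feasible for the full problem, $x^\star\in K_{J'}$ for every $J'\subset[1,n]$, and $\|b-x^\star\|=\|x_J-x^\star\|/2\leq D/2$, so $B(b,D/2)$ meets each $K_{J'}$ with $|J'|=k$ and the precondition is satisfied. The theorem then produces a point $q$ with $d(q,K_i)\leq\eta:=\tfrac{D}{2}\sqrt{(m-k)/(k(m-1))}$ for every $i$. Picking $y_i\in K_i$ with $\|q-y_i\|\leq\eta$, Lipschitz continuity of $f_i$ together with $f_i(y_i)\leq 0$ yields $f_i(q)\leq L_i\eta$, hence $\sum_i\lambda_if_i(q)\leq\eta\sum_i\lambda_iL_i$.

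To conclude I would combine strong duality at $q$, which gives $f_0(q)\geq f_0(x^\star)-\eta\sum_i\lambda_iL_i$, with Lipschitz continuity of $f_0$ relating $f_0(q)$ and $f_0(x_J)$. The latter uses that the centroid-type construction underlying Theorem~\ref{th:app-helly} places $q$ at Helly distance $\eta$ of the reference point $x_J$, contributing an $L_0\eta$ correction. Together these produce
\[
f_0(x^\star)-f_0(x_J)\leq L_{\mathrm{tot}}\,\eta = \Big(L_0+\sum_{i=1}^n\lambda_iL_i\Big)\frac{D}{2}\sqrt{\frac{m-k}{k(m-1)}},
\]
which is the announced bound~\eqref{eq:const-samp}.

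The hard part is this last Lipschitz transfer from $q$ to $x_J$: the abstract statement of Theorem~\ref{th:app-helly} only guarantees $q\in B(b,D/2)$, giving a priori $\|q-x_J\|\leq D$, which would only yield the weaker bound with $L_0\cdot\tfrac{D}{2}$ in place of $L_0\eta$. Matching the announced combined coefficient $L_0+\sum_i\lambda_iL_i$ multiplying a single Helly factor $\eta$ therefore requires either refining the location of $q$ by inspecting the proof of Theorem~\ref{th:app-helly}, or enlarging the Helly family so that a sublevel constraint on $f_0$ is also handled, folding the $L_0$ contribution directly into the Helly-type distance.
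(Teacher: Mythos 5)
Your proposal stalls exactly where you say it does, and the gap is fatal to your primary route rather than a technicality. Applying Theorem~\ref{th:app-helly} to the family $\{K_i\}_{i=1}^n$ alone is vacuous: since the original problem is feasible, $x^\star\in\cap_{i=1}^n K_i$, so the theorem's conclusion is already witnessed by $q=x^\star$ with all distances zero --- your own verification of the hypothesis via $x^\star$ signals this. The statement then gives no control whatsoever linking $q$ to $x_J$ (it does not even assert $q\in B(b,D/2)$), so the ``Lipschitz transfer from $q$ to $x_J$'' with an $L_0\eta$ correction, where $\eta=\frac{D}{2}\sqrt{\frac{m-k}{k(m-1)}}$, cannot be extracted from the result as stated; the best one gets is the trivial $L_0\cdot\|q-x_J\|$ term. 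The midpoint-ball construction does not help, because the witness you feed to the Helly hypothesis must encode information about the \emph{value} $f_0(x_J)$, and $x^\star$ carries none.

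The paper's proof is precisely the second fix you name, but organized as a contradiction-plus-perturbation argument, and the crucial witness is $x_J$, not $x^\star$. Assume $f_0(x_J)\leq f_0(x^\star)-\beta$ for every $|J|\leq k$, and adjoin $K_0=\{x:f_0(x)\leq f_0(x^\star)-\beta\}$ to the family. Under the contradiction hypothesis $x_J\in K_0\cap_{j\in J}K_j$, and since $K_0\subset\{f_0\leq f_0(x^\star)\}$ it lies in a ball of radius $D/2$, so the hypothesis of Theorem~\ref{th:app-helly} holds for the enlarged family. This yields $x_b$ with $d(x_b,K_i)\leq\eta$ for all $i=0,\ldots,n$, hence $f_0(x_b)\leq f_0(x^\star)-\beta+L_0\eta$ and $f_i(x_b)\leq L_i\eta$: the $L_0$ contribution is folded into the single Helly factor through $K_0$, exactly as you anticipated. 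The paper then concludes via the sensitivity inequality for the perturbed problem, $p(u)\geq f_0(x^\star)-\lambda^\top u$ with $u_i=L_i\eta$ (weak duality, \citep[\S 5.6.2]{Boyd03}), forcing $\beta\leq\left(L_0+\sum_{i=1}^n\lambda_iL_i\right)\eta$. Your saddle-point inequality $f_0(q)\geq f_0(x^\star)-\eta\sum_i\lambda_iL_i$ is equivalent in substance to that step, so the genuinely missing ingredients are the enlarged family containing the sublevel set $K_0$ and the contradiction framing around the level $f_0(x^\star)-\beta$, which together replace the unobtainable $q$-to-$x_J$ transfer.
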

\begin{proof}
By contradiction, let $\beta \geq 0$ and suppose
\[
f_0(x_J) \leq f_0(x^\star) - \beta, \quad \mbox{for all $J\subset [1,n]$ such that $|J|\leq k$.}
\]
Calling $K_0=\{x\in\reals^d:f_0(x)\leq f_0(x^\star) - \beta\}$ and $K_i=\{x\in\reals^d:f_i(x)\leq 0\}$ for $i=1,\ldots,n$, this means
\[
K_0 \cap_{j \in J}K_j \neq \emptyset.
\]
Because $K_0\subset\{ x\in\mathbb{R}^d : f(x_0) \leq f_0(x^*)\}$, there exists $b\in\mathbb{R}^d$ s.t. $K_0\subset B(b,D/2)$. Hence we have %because $K_0 \subset B(b,D/2)$ for some $b\in\reals^d$, we have
\[
B(b,D/2) \cap K_0 \cap_{j \in J}K_j \neq \emptyset, \quad \mbox{for all $J\subset [1,n]$ such that $|J|\leq k$.}
\]
Now Theorem~\ref{th:app-helly} shows that there is some $x_b\in\reals^d$ such that
\[
\dist(x_b,K_i) \leq \frac{D}{2} \sqrt{\frac{m-k}{k(m-1)}}, \quad \mbox{for $i=0,\ldots,n$.}
\]
We then get
\[
f_0(x_b) \leq f_0(x_0) + L_0 \|x_0 - x_b\|_2 \leq f_0(x^\star) - \beta +  \frac{L_0 D}{2} \sqrt{\frac{m-k}{k(m-1)}}
\]
for some $x_0\in K_0$, and 
\[
f_i(x_b) \leq f_i(x_i) + L_i \|x_i - x_b\|_2 \leq   \frac{L_i D}{2} \sqrt{\frac{m-k}{k(m-1)}}, \quad \mbox{for $i=1,\ldots,n$.}
\]
for some $x_i\in K_i$. Overall, this shows that
\BEQ\label{eq:pert-bnd}
\left\{\BA{l}
f_0(x_b) \leq f_0(x^\star)  +  \frac{L_0 D}{2} \sqrt{\frac{m-k}{k(m-1)}} - \beta\\
f_j(x_b) \leq \frac{L_i D}{2} \sqrt{\frac{m-k}{k(m-1)}}, \quad \mbox{for $i=1,\ldots,n$.}
\EA\right.\EEQ
Now, let us write a perturbed version of problem~\eqref{eq:cvx_gen-samp} as follows 
\[
\BA{rll}p(u)  \triangleq & \mbox{min.} & f_0(x)\\
& \mbox{s.t.} & f_i(x)\leq u,\quad \mbox{for $i=1,\ldots,n$,}
\EA\]
in the variable $x\in\reals^d$ and perturbation parameters $u\in\reals^n$. The inequalities on $x_b$ in~\eqref{eq:pert-bnd} imply that 
\[
p(u) \leq f_0(x^\star) + \frac{L_0 D}{2} \sqrt{\frac{m-k}{k(m-1)}} - \beta
\]
where
\[
u_i=\frac{L_i D}{2} \sqrt{\frac{m-k}{k(m-1)}}, \quad \mbox{for $i=1,\ldots,n$.}
\]
Weak duality (see e.g. \citep[\S5.6.2]{Boyd03}) also shows that
\[
p(u) \geq f_0(x^\star) -\lambda^T u
\]
and we get a contradiction unless
\[
f_0(x^\star) + \frac{L_0 D}{2} \sqrt{\frac{m-k}{k(m-1)}} - \beta \geq f_{0}(x^\star) -\lambda^T u
\]
which is the desired result.
\end{proof}

Note that when $D=\infty$ the statement in \eqref{eq:const-samp} is of course vacuous. In particular additional assumptions on $f$, such as strong convexity, would ensure that $D$ is finite and improve the factor $D$ in the right hand side of \eqref{eq:const-samp}. In practice, to minimize the number of active constraints, we typically look for sparse solutions to the dual. Theorem~\ref{th:samp} quantifies the tradeoff between number of constraints and duality gap for approximately sparse solutions. This tradeoff will be especially favorable if the quantity 
\[
\left(L_0+\sum_{i=1}^n \lambda_iL_i\right).
\]
which can be seen as the weighted $\ell_1$ norm of the dual solution is small. This quantity thus explicitly quantifies the impact of constraint sampling for approximately sparse solutions.

% J cannot be compute explicitly a priori. However, active sampled constraints do not need to be identified, but can give a better bound on the gap at the optimum of the original problem.
% Illustrate impact on SF gap...
% Simple bounds on D from strong convexity?

\section{Approximate Carath\'eodory \& Shapley-Folkman Theorems}\label{s:approx-sf}
We will now derive a version of the Shapley-Folkman result in Theorem~\ref{th:sf} which only approximates~$x$ but where $\mathcal{S}$ is typically smaller.

\subsection{Approximate Carath\'eodory Theorems}
Recent activity around Carath\'eodory's theorem \citep{Dona97,Vers12,Dai14} has focused on producing tight approximate versions of this result, where one aims at finding a convex combination using fewer elements, which is still a ``good'' approximation of the original element of the convex hull. The following theorem for instance, produces an upper bound on the number of elements needed to achieve a given level of precision, using a randomization argument.
\begin{theorem}[Approximate Carath\'eodory]\label{th:approx-cara}
  Let $V \subset \mathbb{R}^d$, $x \in \Co(V)$ and $\varepsilon > 0$. We assume that $V$ is bounded and we write $D_p$ the quantity $D_p \triangleq \sup_{v \in V} \| v \|_p$, for $p \geq 2$. Then, there exists some $\hat{x}\in \Co(V)$ and $m \leq 8pD_p^2/\varepsilon^2$ such that
  \[
\textstyle    \|x - \hat{x} \| = \left\|x - \sum_{i = 1}^m \lambda_i v_i \right\|_p \leq \varepsilon,
  \]
  for some $v_i \in V$, $\lambda_i > 0$ and $\ones^{\top} \lambda = 1$.
\end{theorem}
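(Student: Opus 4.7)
The plan is to use Maurey's probabilistic (empirical) method: interpret the exact convex representation of $x$ as the expectation of a $V$-valued random variable, approximate $x$ by an empirical average, and control the expected $\ell_p$ error via the type-$2$ inequality for $\ell_p^d$. By the classical Carath\'eodory theorem I may fix an exact representation $x=\sum_{i=1}^N \mu_i v_i$ with $v_i\in V$, $\mu_i\geq 0$ and $\sum_i \mu_i=1$. Let $Z$ be the $V$-valued random variable taking value $v_i$ with probability $\mu_i$, so that $\mathbb{E}[Z]=x$ and $\|Z\|_p\leq D_p$ almost surely. Draw $Z_1,\ldots,Z_m$ i.i.d.\ copies of $Z$ and set $\hat x=\tfrac{1}{m}\sum_{j=1}^m Z_j$; by construction $\hat x$ is automatically a convex combination of at most $m$ elements of $V$, so it only remains to choose $m$ so that $\hat x$ is within $\varepsilon$ of $x$ in $\ell_p$ norm.

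Next, I would control $\mathbb{E}\|\hat x-x\|_p^2$. Writing $\xi_j=Z_j-x$, the $\xi_j$ are i.i.d., mean-zero and satisfy $\|\xi_j\|_p\leq 2D_p$. The key analytic ingredient is the type-$2$ inequality in $\ell_p^d$: for $p\geq 2$ and independent mean-zero random vectors $\xi_j\in\ell_p^d$,
\[
\mathbb{E}\Bigl\|\sum_{j=1}^m \xi_j\Bigr\|_p^2\leq (p-1)\sum_{j=1}^m \mathbb{E}\|\xi_j\|_p^2,
\]
a standard consequence of the uniform $2$-smoothness of the $\ell_p$ norm (Clarkson, Ball--Carlen--Lieb). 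Dividing by $m^2$ and using $\|\xi_j\|_p\leq 2D_p$ gives
\[
\mathbb{E}\|\hat x-x\|_p^2\leq \frac{4(p-1)D_p^2}{m}.
\]
Requiring the right-hand side to be at most $\varepsilon^2$ is ensured by $m\geq 4(p-1)D_p^2/\varepsilon^2$, which is comfortably implied by the stated bound $m\leq 8pD_p^2/\varepsilon^2$. A standard probabilistic-method argument then produces an actual realization of $\hat x$ with $\|\hat x-x\|_p\leq \varepsilon$, which is the claim.

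The main obstacle is the type-$2$ inequality with the correct $\sqrt{p-1}$ constant, since this is precisely what delivers the linear dependence in $p$ advertised in the sparsity bound. If one does not want to invoke the inequality as a black box, it can be proved in passing from the two-point $2$-smoothness inequality $\|u+v\|_p^2+\|u-v\|_p^2\leq 2\|u\|_p^2+2(p-1)\|v\|_p^2$ via a symmetrisation argument and an induction across the $m$ independent summands. Once this ingredient is in place, the empirical-average construction yields the sparsity $m\leq 8pD_p^2/\varepsilon^2$ stated in the theorem.
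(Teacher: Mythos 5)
Your proof is correct and follows essentially the same route as the paper, which states the result as a direct consequence of Maurey's lemma \citep{Pisi81} via exactly the probabilistic argument you spell out: sample the Carath\'eodory representation with replacement, form the empirical average, and control $\mathbb{E}\|\hat x - x\|_p^2$ through the $(2,\sqrt{p-1})$-smoothness (type-2) inequality of $\ell_p$ for $p\geq 2$. Your write-up merely makes explicit what the paper leaves as a citation, and the constant bookkeeping ($4(p-1)\leq 8p$, leaving room to round $m$ up to an integer) checks out.
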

This result is a direct consequence of Maurey's lemma \citep{Pisi81} and is based on a probabilistic approach which samples vectors $v_i$ with replacement and uses concentration inequalities to control approximation error, but can also be seen as a direct application of Frank-Wolfe type algorithms to 
\begin{comment}
\[
\textstyle \mbox{minimize} ~ \left\|x - \sum_{i = 1}^N \lambda_i v_i \right\|^2
\]
in the variable $\lambda\in\reals^n$. 
\end{comment}
\[
\textstyle \underset{v\in\Co(V)}{\mbox{minimize}} ~~ \left\|x - v \right\|~,
\]
where the algorithm is stopped when the iterate has enough extreme points in its representation.

In the results that follow however, we will have $N=n+m+1$, and we will seek approximations using $s$ terms with $s\in[n,n+m+1]$ with $n$ typically much bigger than~$m$. Sampling with replacement does not provide precise enough bounds in this setting and we will use results from \citep{Serf74} on sample sums {\em without replacement} to produce a more precise version of the approximate Carath\'eodory theorem that handles the case where a high fraction of the coefficients is sampled.

\begin{theorem}[High-sampling ratio in $\ell_{\infty}$]\label{th:approx-cara-high}
Let $x=\sum_{j=1}^N \lambda_j V_j {\in\mathbb{R}^d}$ for $V \in \mathbb{R}^{d\times N}$ and some $\lambda \in \reals^N$ such that $\ones^T\lambda=1, \lambda \geq 0$. Let $\varepsilon > 0$ and write $R=\max\{ R_v,R_\lambda\}$ where $R_v=\max_{i} \|\lambda_i V_i\|_\infty$ and  $R_\lambda=\max_{i} |\lambda_i| $. Then, there exists some $\hat{x}=\sum_{j\in \mathcal{J}} \mu_j V_j$ with $\mu \in \reals^m$ and $\mu \geq 0$, where $\mathcal{J} \subset [1,N]$ has size
\[
%|\mathcal{J}| =1+  N\,\frac{{\log(2d)(\sqrt{N}\,R/\varepsilon)^2}}{2+{\log(2d)(\sqrt{N}\,R/\varepsilon)^2}}
|\mathcal{J}| = 1 + N\,\frac{{2\log(4d)(\sqrt{N}\,R/\varepsilon)^2}}{1+2 {\log(4d)(\sqrt{N}\,R/\varepsilon)^2}}
\] 
and is such that $\|x - \hat{x} \|_\infty \leq \varepsilon$ and $|\sum_{j \in \mathcal{J}} \mu_j -1|\leq \varepsilon$.
\end{theorem}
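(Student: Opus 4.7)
The plan is to construct $\hat{x}$ by sampling $k=|\mathcal{J}|$ indices uniformly at random \emph{without replacement} from $[1,N]$ and setting $\mu_j = (N/k)\lambda_j$ for $j\in\mathcal{J}$. This yields an unbiased estimator of both $x$ and of the weight sum $1$, with $\mu\geq 0$ automatic. The task is then to show that with positive probability both $\|\hat x - x\|_\infty \leq \varepsilon$ and $|\sum_{j\in\mathcal{J}}\mu_j - 1|\leq \varepsilon$ hold simultaneously for $k$ no larger than the stated bound; the probabilistic method then delivers a deterministic~$\mathcal{J}$.

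Fix a coordinate $e\in[1,d]$ and set $a_j = (\lambda_j V_j)_e$, so $|a_j|\leq R_v\leq R$. Then $x_e = \sum_j a_j = N\bar{a}_N$ and $\hat{x}_e = N\bar{a}_k$ with $\bar{a}_k = (1/k)\sum_{j\in\mathcal{J}} a_j$, so that $|x_e - \hat{x}_e| = N|\bar{a}_N - \bar{a}_k|$. The Serfling concentration inequality for sampling without replacement \citep{Serf74,Bard15} applied to the $R$-bounded sequence $(a_j)$ gives
\[
\mathbb{P}\bigl(|\bar{a}_N - \bar{a}_k| \geq \varepsilon/N\bigr) \;\leq\; 2\exp\!\left(-\frac{k\varepsilon^2}{2NR^2(N-k+1)}\right).
\]
The same argument applied to the scalars $\lambda_j$ (bounded by $R_\lambda\leq R$) controls the weight-sum event $N|\bar{\lambda}_N - \bar{\lambda}_k| \geq \varepsilon$ by the same quantity. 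A union bound over the $d$ coordinates of $x$ and the weight-sum event yields a total failure probability of at most $2(d+1)\exp(\cdot) \leq 4d\exp(\cdot)$, using $d\geq 1$.

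It remains to choose $k$ so this probability is strictly less than one. Writing $\alpha = 2\log(4d)(\sqrt{N}R/\varepsilon)^2$, the condition becomes $k\varepsilon^2 \geq 2NR^2(N-k+1)\log(4d)$, i.e.\ $k(1+\alpha)\geq \alpha(N+1)$, so $k\geq \alpha(N+1)/(1+\alpha)$. A direct calculation shows $1 + N\alpha/(1+\alpha) - \alpha(N+1)/(1+\alpha) = 1/(1+\alpha) \geq 0$, so the stated size $|\mathcal{J}| = 1 + N\alpha/(1+\alpha)$ suffices (rounded up if non-integer), completing the existence argument.

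The main obstacle is conceptual rather than technical: the classical Maurey/Pisier approach underlying Theorem~\ref{th:approx-cara} samples \emph{with} replacement from the distribution $\lambda$, and its concentration bound carries no factor shrinking as the sample exhausts the population; it therefore becomes vacuous in precisely the high-sampling-ratio regime $k/N\to 1$ we care about. The Serfling-type inequality supplies the finite-population correction $(N-k+1)/N$, which drives the exponent to $-\infty$ as $k\to N$ and produces a bound that saturates gracefully at $k=N$. Ensuring the single-event bound is tight enough to survive the union bound, and translating the resulting implicit inequality in $k$ into the closed-form $1+N\alpha/(1+\alpha)$, is where the accounting actually lives.
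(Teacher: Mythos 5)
Your proof is correct and follows essentially the same route as the paper's: sample $\mathcal{J}$ uniformly without replacement, rescale the weights by $N/|\mathcal{J}|$, apply the Hoeffding--Serfling inequality of \citet{Serf74} coordinate-wise and to the weight sum, and conclude by a union bound and the probabilistic method. Your bookkeeping via a direct union bound over $d+1$ events with $2(d+1)\leq 4d$ is a slightly streamlined version of the paper's argument (which introduces a parameter $\beta$ and sets $\beta=1/2$), and it recovers the same $2\log(4d)$ constant and the same closed-form sampling ratio.
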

\begin{proof}
Let $\epsilon>0$ and
\[
S^{(i)}_m =\sum_{j\in\mathcal{J}} \lambda_j V_j^{(i)}
\]
where $\mathcal{J}$ is a random subset of $[1,N]$ of size $m$, then \citep[Cor\,1.1]{Serf74} shows
\[
\Prob\left(\left|\frac{N}{m} S^{(i)}_m - x^{(i)} \right|\geq \varepsilon \right) \leq 2 \exp\left(\frac{-\alpha_m \varepsilon^2}{2N(1-\alpha_m){R^2}}\right)
\]
where $\alpha_m=(m-1)/N$ is the sampling ratio. Let $\beta\in]0,1[$, a union bound then means that setting 
\BEQ\label{eq:choice_ratio}
%\frac{\alpha_m}{1-\alpha_m} \geq \frac{\log(2d)(\sqrt{N}\,R_v)^2}{2\varepsilon^2}
\frac{\alpha_m}{1-\alpha_m} \geq \frac{2 \log(2 d/(1 - \beta))(\sqrt{N}\,R)^2}{\varepsilon^2}
\EEQ
\begin{comment}
or again
\[
\alpha_m \geq  \frac{{\log(2d)(\sqrt{N}\,R)^2}/{2\varepsilon^2}}{1+{\log(2d)(\sqrt{N}\,R)^2}/{2\varepsilon^2}}
\]
\end{comment}
ensures $\|x - \hat{x} \|_\infty \leq \varepsilon$ with probability at least $\beta$. With $\alpha_m$ as in \eqref{eq:choice_ratio}, a similar reasoning applied to $\mu = \frac{N}{m}\lambda$, with $\mathcal{S}$ a random subset of $[1,N]$ of size $m$, ensures that $|\sum_{j \in \mathcal{S}} \mu_j -1|\leq \varepsilon$ with probability at least $1-(1-\beta)/d$. Hence any choice of $\beta> 1/(d+1)$ ensures that there exists a subset $\mathcal{J}$ of size $m$ with
\[
\alpha_m \geq  \frac{{2\log(d/(1-\beta))(\sqrt{N}\,R/\varepsilon)^2}}{1+{2\log(d/(1-\beta))(\sqrt{N}\,R/\varepsilon)^2}}
\]
that yields the desired result (with $\beta=1/2$ for instance).
\begin{comment}
A similar reasoning, picking this time 
\[
S^{(i)}_m =\sum_{j\in\mathcal{S}} \lambda_j,
\]
ensures $\mu=\frac{N}{m} \lambda$ satisfies $|\sum_{j \in \mathcal{S}} \mu_j -1|\leq \varepsilon$ with probability at least $1-1/2d$ since $R=\max\{R_v,R_\lambda\}$, which yields the desired result.
\end{comment}
\end{proof}

The result above uses Hoeffding-Serfling bounds on real-valued random variables to provide error bounds in $\ell_\infty$ norm. % Recent results by \citep{Bard15} provide Bernstein-Serfling type inequalities where the radius $R$ above can be replaced by a standard deviation. 
Since the vectors we consider here have a block structure coming the epigraphs $\mathcal{F}_i$, we consider generic Banach spaces to properly fit the norm to this structure by extending this last result to arbitrary norms in $(2,D)$-smooth Banach spaces using a recent result on Hoeffding-Serfling bounds in Banach spaces by \citep{Schn16}.

\begin{theorem}[Approximate Carath\'eodory with High Sampling Ratio in Banach spaces]\label{th:approx-cara-high-banach}
Let $x=\sum_{j=1}^N \lambda_j V_j\in \mathbb{R}^d$ for $V \in \mathbb{R}^{d\times N}$ and some $\lambda \in \reals^N$ such that $\ones^T\lambda=1, \lambda \geq 0$. Let $\varepsilon > 0$ and write $R=\max\{D R_v,R_\lambda\}$ where $R_v=\max_{i} \|\lambda_i V_i\|$ and  $R_\lambda=\max_{i} |\lambda_i|$, for some norm $\|\cdot\|$ such that $(\mathbb{R}^{d},\|\cdot\|)$ is $(2,D)$-smooth (see Definition \ref{def:2D_smooth}). Then, there exists some $\hat{x}=\sum_{j\in \mathcal{J}} \mu_j V_j$ with $\mu \in \reals^m$ and $\mu \geq 0$, where $\mathcal{J} \subset [1,N]$ has size
\BEQ\label{eq:size_J}
%|\mathcal{J}| =1+  N\,  \frac{{c(\sqrt{N}\,D\,R/\varepsilon)^2}}{1+{c(\sqrt{N}\,D\,R/\varepsilon)^2}}
|\mathcal{J}| =1+  N\,  \frac{{c(\sqrt{N}\,R/\varepsilon)^2}}{1+{c(\sqrt{N}\,R/\varepsilon)^2}}
\EEQ
for some absolute constant $c>0$, and is such that $\|x - \hat{x} \| \leq \varepsilon$ and $|\sum_{j \in \mathcal{J}} \mu_j -1|\leq \varepsilon$.
\end{theorem}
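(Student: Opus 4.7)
The plan is to follow the blueprint of the proof of Theorem~\ref{th:approx-cara-high} but replace the coordinate-wise union bound by a single application of a Hoeffding-Serfling-type inequality valid directly in $(2,D)$-smooth Banach spaces. Concretely, I would let $\mathcal{J}\subset [1,N]$ be a uniformly random subset of size $m$, consider the scaled sample sum $\hat x = (N/m)\sum_{j\in\mathcal{J}} \lambda_j V_j$ and the induced coefficients $\mu_j = (N/m)\lambda_j$, and then show that for $m$ exceeding the threshold in \eqref{eq:size_J} both events $\|\hat x - x\|\leq \varepsilon$ and $|\sum_{j\in\mathcal{J}}\mu_j - 1|\leq \varepsilon$ hold with positive probability. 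The existence of a deterministic subset with both properties then follows immediately.

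The central step is the vector-valued tail bound. The $(2,D)$-smoothness hypothesis (Definition~\ref{def:2D_smooth}) supplies exactly the Banach-space second-moment estimate $\mathbb{E}\,\|\sum_j \xi_j y_j\|^2 \leq D^2 \sum_j \|y_j\|^2$ for centered independent weights $\xi_j$, which is the ingredient required by the Banach-space extension of Hoeffding-Serfling in \citep{Schn16}. Applied to the vectors $\lambda_j V_j$, each of norm at most $R_v$, and to sampling without replacement, it should yield an estimate of the shape
\[
\Prob\!\Big(\big\|\tfrac{N}{m}\textstyle\sum_{j\in\mathcal{J}}\lambda_j V_j - x\big\| \geq \varepsilon\Big) \leq 2\exp\!\Big(-\frac{c'\alpha_m\varepsilon^2}{N(1-\alpha_m)D^2 R_v^2}\Big),
\]
with $\alpha_m=(m-1)/N$ the sampling ratio and $c'>0$ an absolute constant. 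This is the clean Banach-space analogue of the scalar Hoeffding-Serfling bound used in Theorem~\ref{th:approx-cara-high}: the coordinate union-bound factor $\log(2d)$ is traded for the intrinsic smoothness constant $D^2$.

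For the coefficient-sum event, the purely scalar Hoeffding-Serfling inequality applied to $\sum_{j\in\mathcal{J}}\lambda_j$ delivers the analogous tail with $R_\lambda$ in place of $D R_v$. Setting $R=\max\{D R_v, R_\lambda\}$ and taking a union bound over the two events, the combined failure probability becomes strictly less than $1$ as soon as $\alpha_m/(1-\alpha_m) \geq c\,(\sqrt{N}\,R/\varepsilon)^2$ for a suitable absolute constant $c$; solving for $\alpha_m$ and writing $m = 1 + N\alpha_m$ recovers the size bound~\eqref{eq:size_J}. The main obstacle is to make sure that the exact form of the Hoeffding-Serfling inequality for $(2,D)$-smooth Banach spaces quoted from \citep{Schn16} genuinely retains the $(1-\alpha_m)$ variance reduction factor intrinsic to sampling without replacement; once that statement is in hand, the remainder is essentially the same bookkeeping as in the $\ell_\infty$ proof of Theorem~\ref{th:approx-cara-high}.
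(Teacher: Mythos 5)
Your proposal is correct and follows essentially the same route as the paper: the paper's proof likewise reruns the argument of Theorem~\ref{th:approx-cara-high} with the scalar Serfling bound plus coordinate union bound replaced by the Banach-space Hoeffding--Serfling inequality of \citep[Th.\,1]{Schn16} for the vector part, handles the coefficient sum with the scalar bound of \citep{Serf74} using $R=\max\{DR_v,R_\lambda\}\geq R_\lambda$, and concludes existence from positive probability of the intersection of the two events. Your extra care about the $(1-\alpha_m)$ factor in Schneider's bound is exactly the point the paper relies on implicitly when imposing $\alpha_m \geq c(\sqrt{N}R/\varepsilon)^2/(1+c(\sqrt{N}R/\varepsilon)^2)$, which is equivalent to your condition $\alpha_m/(1-\alpha_m)\geq c(\sqrt{N}R/\varepsilon)^2$.
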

\begin{proof}
We use \citep[Th.\,1]{Schn16} instead of \citep[Cor\,1.1]{Serf74} in the proof of Theorem~\ref{th:approx-cara-high}. This means imposing
\[
%\alpha_m \geq  \frac{{c(\sqrt{N}\,R\,D/\varepsilon)^2}}{1+{c(\sqrt{N}\,R\,D/\varepsilon)^2}}
\alpha_m \geq  \frac{{c(\sqrt{N}\,R/\varepsilon)^2}}{1+{c(\sqrt{N}\,R/\varepsilon)^2}}
\]
Finally, $R=\max\{D R_v,R_\lambda\}\geq R_\lambda$ ensures that the Hoeffding like bound in \citep{Serf74} also holds, with $|\sum_{j \in \mathcal{S}} \mu_j -1|\leq \varepsilon$, and yields the desired result.
\end{proof}

For the sake of clarity, the result of the Theorem \ref{th:approx-cara-high-banach} is spelled out as deterministic. In fact however, it states that there is a probability (related to a particular value of $c$) that a uniformly chosen subset $\mathcal{J}\subset[1,N]$ of size \eqref{eq:size_J} satisfies the Approximate Carath\'eodory conditions. We use this probabilistic version in the proof of Theorem \ref{th:approx-sf-th}.

For real valued random variables, recent results by \citep{Bard15} provide Bernstein-Serfling type inequalities where the radius $R$ above can be replaced by a standard deviation. This leads to an extension of Theorem \ref{th:approx-cara-high}. We also show a Bennett-Serfling inequality in Banach spaces in \S\ref{ssec:Bennet_Sterfling} which allows us to control the sampling ratio using a variance term. This means that the sampling ratio in Theorem~\ref{th:approx-cara-high} above can be replaced by 
%\[
%\alpha_m \geq \frac{2 \ln(2/\delta_0) \big[ 2(D\sigma_m)^2 +\epsilon_0 R_v/(3N)\big]N}{\epsilon^2 + 2 \ln(2/\delta_0) \big[2(D\sigma_m)^2\big]N},
%\]
\[
\alpha_m \geq \frac{c \big[ 2(D\sigma_m)^2 +\epsilon R_v/(3N)\big]N}{\epsilon^2 + c \big[2(D\sigma_m)^2\big]N},
\]
where
\[
\sigma_m \triangleq \frac{1}{\sqrt{\sum_{k=1}^{m}{\frac{1}{(N-k)^2}}}}\Big\vert\Big\vert  \Big( \sum_{k=1}^{m}{\frac{1}{(N-k)^2} \mathbb{E}_{k-1}||V_k-\mathbb{E}_{k-1}(V_k)||^2\Big)^{1/2}} \Big\vert\Big\vert_{\infty}~,
\]
plays the role of the standard deviation when sampling without replacement. We call $\sigma_m^2$ a variance because it is the essential supremum of a convex combination of the terms $\tilde{\sigma}_k^2 = \mathbb{E}_{k-1}||V_k-\mathbb{E}_{k-1}(V_k)||^2$ (see \eqref{eq:def_filtration} for definition of $\mathbb{E}_{k-1}$). For $k=1$, $\sigma_1^2$ is exactly the variance of $V$, while when $k=N-1$, $\tilde{\sigma}_k$ is not much different from the diameter of the set $V$.

\subsection{Approximate Shapley-Folkman Theorems}
We now prove an approximate version of the Shapley-Folkman theorem, plugging approximate Carath\'eodory results inside the proof of Theorem~\ref{th:sf}.

\begin{theorem}[Approximate Shapley-Folkman]\label{th:approx-sf-th}
Let $\varepsilon,\beta,\gamma >0$ and $V_i \in\reals^d$, $i=1,\ldots,n$ be a family of subsets of $\reals^d$. Suppose
\[
x = \sum_{i=1}^n\sum_{j=1}^{d+1} \lambda_{ij} v_{ij} ~\in ~\sum_{i=1}^n\Co\left(V_i\right)
\]
where $\lambda_{ij} \geq 0$ and $\sum_j \lambda_{ij}=1$. We write $R=\max\{\beta D R_v,  \gamma R_\lambda\}$ where $R_v= \max_{\{ij:\lambda_{ij}\neq 1\}} \|\lambda_{ij} v_{ij}\|$ and  $R_\lambda= \max_{\{ij:\lambda_{ij}\neq 1\}} |\lambda_{ij}|$, for some norm $\|\cdot\|$ such that $(\mathbb{R}^{d},\|\cdot\|)$ is $(2,D)$-smooth. Then there exists a point $\hat x\in\reals^d$, coefficients $\mu_{i} \geq 0$ and index sets $\mathcal{S},\mathcal{T}\subset[1,n]$ with $\mathcal{S}\cap\mathcal{T} =\emptyset$ such that $q\triangleq |\mathcal{S}|+|\mathcal{T}| \leq d$, and
\[
\textstyle \hat x \in~ \sum_{[1,n]\setminus (\mathcal{S}\cup\mathcal{T})} V_i ~ + ~  \sum_{i \in \mathcal{T}} \mu_i V_i ~ + ~  \sum_{i \in \mathcal{S}} \mu_i \Co(V_i)
\]
with
\[
\|x-\hat x\| \leq \frac{q}{\beta} \varepsilon,
\qquad
\left|\sum_{i\in\mathcal{S}\cup\mathcal{T}} \mu_{i} - q\right|\leq q\varepsilon
\qquad
\mbox{and}
\qquad
\left(\sum_{i\in\mathcal{S}\cup\mathcal{T}}\left(\mu_{i} - 1\right)^2\right)^{1/2} \leq \frac{q}{\gamma} \varepsilon.
\]
where $|\mathcal{S}|\leq (m - |\mathcal{T}|)/2$ with
\BEQ\label{eq:def_m}
m {\triangleq} 1+  (d+q)\,\frac{c(\sqrt{d+q}\,R/q\varepsilon)^2}{1+c(\sqrt{d+q}\,R/q\varepsilon)^2}.
\EEQ
hence, in particular, $|\mathcal{S}|\leq m - q$.
\end{theorem}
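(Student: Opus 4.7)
The plan is to mimic the proof of the classical Shapley-Folkman Theorem~\ref{th:sf} but to replace the exact conic Carath\'eodory step with the high-sampling-ratio approximate version in Banach spaces (Theorem~\ref{th:approx-cara-high-banach}). Starting from the convex representation $x = \sum_{i=1}^n\sum_{j=1}^{d+1} \lambda_{ij} v_{ij}$ with $\sum_j \lambda_{ij}=1$, first peel off the ``frozen'' sets where some $\lambda_{ij}=1$: these already reduce to a single point of $V_i$ and land automatically in $[1,n]\setminus(\mathcal{S}\cup\mathcal{T})$. This is exactly why the radii $R_v$ and $R_\lambda$ in the statement are defined only over pairs with $\lambda_{ij}\neq 1$.

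For the remaining non-frozen sets, lift to the conic form $z=(x,\ones)=\sum\lambda_{ij}z_{ij}$ with $z_{ij}=(v_{ij},e_i)$ living in $\reals^{d+q}$, where $q$ denotes the number of non-frozen sets. The total mass of the $\lambda_{ij}$ over the non-frozen block is $q$, so dividing through by $q$ yields an honest convex combination $z/q=\sum(\lambda_{ij}/q)z_{ij}$ to which Theorem~\ref{th:approx-cara-high-banach} applies. Endow $\reals^{d+q}$ with a block norm of the form $\|(u,w)\|_\star=\max\{\beta\|u\|,\gamma\|w\|_2\}$ so that the approximate Carath\'eodory conclusion simultaneously controls the $x$-approximation (through the $\beta$-block) and the per-set coefficient-sum slack (through the $\gamma$-block). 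The $(2,D)$-smoothness of $\|\cdot\|_\star$ is inherited from the $(2,D)$-smoothness of $\|\cdot\|$ and of $\|\cdot\|_2$, and the quantity $R=\max\{\beta DR_v,\gamma R_\lambda\}$ in the statement is exactly the resulting radius on the normalized vectors; the sampling budget $|\mathcal{J}|=m$ is then delivered by formula~\eqref{eq:def_m} with ambient dimension $d+q$.

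Scaling the approximate Carath\'eodory output back by $q$ multiplies its $\varepsilon$-accuracy by $q$, yielding $\|x-\hat x\|\leq q\varepsilon/\beta$, $|\sum\mu_i-q|\leq q\varepsilon$, and $\bigl(\sum(\mu_i-1)^2\bigr)^{1/2}\leq q\varepsilon/\gamma$ exactly as in the statement. For the counting step, let $\mathcal{S}$ collect the non-frozen indices $i$ with at least two sampled coefficients $\mu_{ij}>0$, and let $\mathcal{T}$ collect the non-frozen indices with exactly one sampled $\mu_{ij}>0$ but $\mu_i\neq 1$. Each $i\in\mathcal{S}$ consumes at least two of the $m$ available slots and each $i\in\mathcal{T}$ at least one, so $2|\mathcal{S}|+|\mathcal{T}|\leq m$, which gives $|\mathcal{S}|\leq(m-|\mathcal{T}|)/2$; combined with the identity $q=|\mathcal{S}|+|\mathcal{T}|$, this also yields $|\mathcal{S}|\leq m-q$. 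Indices outside $\mathcal{S}\cup\mathcal{T}$ contribute single unscaled elements of $V_i$, indices in $\mathcal{T}$ contribute scaled singletons $\mu_iV_i$, and indices in $\mathcal{S}$ contribute true convex combinations in $\mu_i\Co(V_i)$, producing the claimed Minkowski decomposition of $\hat x$.

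The main obstacle is the self-referential character of the sampling budget: formula~\eqref{eq:def_m} prescribes $m$ as a function of $q=|\mathcal{S}|+|\mathcal{T}|$, but $q$ is itself determined only once the sampling has been carried out. The cleanest resolution is to read the theorem as an existence statement quantified over admissible $q\leq d$: for each candidate $q$ one obtains a budget $m(q)$, and the correct $q$ is fixed self-consistently with the realized split of sets into $\mathcal{S}$, $\mathcal{T}$, and frozen, using monotonicity of $m(q)$ in $q$ to rule out circularity. A secondary subtlety is verifying that the maximum-of-two-norms construction $\|\cdot\|_\star$ is itself $(2,D)$-smooth with a manageable constant, which reduces to an elementary inheritance computation on smoothness under taking the maximum of two $(2,D)$-smooth norms, and explains how the two scaling factors $\beta$ and $\gamma$ end up weighting $R_v$ and $R_\lambda$ in the definition of $R$.
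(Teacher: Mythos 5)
Your architecture tracks the paper's proof closely: lift to the conic representation, isolate the set $\mathcal{I}$ of at most $d$ indices carrying two or more nonzero $\lambda_{ij}$, renormalize by $q$, substitute the without-replacement approximate Carath\'eodory theorem for the exact conic step, and count slots via $2|\mathcal{S}|+|\mathcal{T}|\leq m$ to get $|\mathcal{S}|\leq m-q$. But there is one genuine gap: the composite norm $\|(u,w)\|_\star=\max\{\beta\|u\|,\gamma\|w\|_2\}$ is \emph{never} $(2,D)$-smooth, for any finite $D$, so Theorem~\ref{th:approx-cara-high-banach} cannot be invoked with it and the ``elementary inheritance computation'' you defer to does not exist. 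A max of two norms fails to be differentiable at points where the two blocks tie, and the inequality of Definition~\ref{def:2D_smooth} forces differentiability: if $f(t)=\|x+ty\|_\star$ has distinct one-sided derivatives $p>p'$ at $t=0$, then $\|x+ty\|_\star^2+\|x-ty\|_\star^2-2\|x\|_\star^2=2\|x\|_\star(p-p')\,t+o(t)$ grows \emph{linearly} in $t$, which no term $2D^2t^2\|y\|_\star^2$ can dominate for small $t$. (This is the same obstruction that forces the $\log(4d)$ factor in the $\ell_\infty$ version, Theorem~\ref{th:approx-cara-high}, via scalar bounds and a union bound rather than the Banach-space result.) The paper avoids the issue differently: it builds $\beta$ and $\gamma$ directly into the lifted vectors, $z_{ij}=(\beta v_{ij},\gamma e_i)$, and applies the \emph{probabilistic} version of Theorem~\ref{th:approx-cara-high-banach} twice to the same random subset --- once to the upper block with the norm $\|\cdot\|$, once to the lower block with the $\ell_2$ norm (which is $(2,1)$-smooth) --- each application succeeding with probability above $1/2$, so a union bound leaves an event of positive probability on which both error bounds hold simultaneously, whence a deterministic witness exists. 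Your argument is repaired by exactly this two-application device, or alternatively by the $\ell_2$-sum block norm $(\beta^2\|u\|^2+\gamma^2\|w\|_2^2)^{1/2}$, which genuinely is $2$-smooth when its blocks are; the max construction is the one choice that fails.

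A secondary point: the ``self-referential sampling budget'' you flag as the main obstacle is self-inflicted and needs no fixed-point or monotonicity argument. In the paper, $q=|\mathcal{I}|$ is fixed \emph{before} any sampling, as the number of indices with at least two nonzero $\lambda_{ij}$ in the exact Carath\'eodory representation; after sampling one simply partitions $\mathcal{I}=\mathcal{S}\cup\mathcal{T}$, placing in $\mathcal{T}$ every index of $\mathcal{I}$ with at most one surviving coefficient --- including those with $\mu_i=1$ or even $\mu_i=0$, which the statement tolerates since it only requires $\mu_i\geq 0$. Then $q=|\mathcal{S}|+|\mathcal{T}|$ holds by construction and $m(q)$ in~\eqref{eq:def_m} is well defined a priori. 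Your circularity arises only because you eject singleton indices with $\mu_i=1$ from $\mathcal{T}$, making $q$ depend on the realized sample; keeping them in $\mathcal{T}$ removes the issue and also quietly absorbs the edge case of indices of $\mathcal{I}$ that receive no sampled coefficient at all.
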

\begin{proof}
If $x\in \sum_{i=1}^n\Co\left(V_i\right)$, as in the proof of Theorem~\ref{th:sf} above, we can write 
\[
z=\sum_{i=1}^n\sum_{j=1}^{d+1} \lambda_{ij} z_{ij}
\]
where $z\in\reals^{d + n}$ and
\[
z=
\begin{pmatrix}
\beta x \\ \gamma\ones_{n}
\end{pmatrix}
,\quad
z_{ij}=
\begin{pmatrix}
\beta v_{ij} \\ \gamma e_i
\end{pmatrix}
,\quad\mbox{for $i=1,\ldots,n$ and $j=1,\ldots,d+1$,}
\]
with $e_i \in \reals^n$ is the Euclidean basis, $\gamma,\beta>0$ and by the classical Carath\'eodory bound, at most $d + n$ coefficients $\lambda_{ij}$ are nonzero (note the extra scaling factors $\gamma,\beta>0$ here compared to Theorem~\ref{th:sf}). Let us call $\mathcal{I}\subset [1,n]$ the set of indices such that $i\in\mathcal{I}$ iff at least two coefficients in $\{\lambda_{ij}: j\in[1,d+1]\}$ are nonzero. As in Theorem~\ref{th:sf}, we must have $|\mathcal{I}|\leq d$. We write
\[
\frac{y}{|\mathcal{I}|}=\sum_{i\in\mathcal{I}}\sum_{j=1}^{d+1} \frac{\lambda_{ij}}{|\mathcal{I}|} z_{ij}
\]
where $\sum_{i\in\mathcal{I}}\sum_{j=1}^{d+1} \lambda_{ij}/|\mathcal{I}|=1$ and at most $d+|\mathcal{I}|$ coefficients $\lambda_{ij}$ are nonzero. We will apply the probabilistic version of Theorem~\ref{th:approx-cara-high-banach} twice here with radius $R/q$ where $q=|\mathcal{I}|$. Once on the upper block of the vectors $z_{ij}$ using the norm $\|\cdot\|$ and then on the lower blocks of these vectors (corresponding to the constraints on $\lambda_{ij}$), using the $\ell_2$ norm to exploit the fact that these lower blocks have comparatively low $\ell_2$ radius. 

Theorem~\ref{th:approx-cara-high-banach} applied to the upper block of $y/q$ and of the vectors $z_{ij}$ shows that with probability higher than $1/2$ there exists some ${\hat{x}}/{|\mathcal{I}|}=\sum_{i\in\mathcal{I}}\sum_{j=1}^{d+1} \mu_{ij} v_{ij}$ with $|\sum_{i\in\mathcal{I}}\sum_{j=1}^{d+1} \mu_{ij} -1|\leq\varepsilon$,  $\mu \geq 0$, where at most $m$ (defined in \eqref{eq:def_m}) coefficients $\mu_{ij}$ are nonzero and 
\[
\left\| x - \sum_{i\in[1,n]\setminus \mathcal{I}} v_i - \hat{x} \right\| \leq |\mathcal{I}|\varepsilon/\beta.
\] 
for some $v_i\in V_i$. Then, Theorem~\ref{th:approx-cara-high-banach} applied to the lower block of the vectors $z_{ij}$ shows that with probability higher than $1/2$ the weights $\mu_{ij}$ sampled above satisfy 
\[
\textstyle \left(\sum_{i\in\mathcal{I}}\left(\sum_{j=1}^{d+1} |\mathcal{I}| \mu_{ij} - 1\right)^2\right)^{1/2} \leq |\mathcal{I}|\varepsilon / \gamma.
\]
with the $\ell_2$ norm being $D=1$ smooth. Let $\mathcal{T}$ be the equivalent of $\mathcal{I}$ with respect to the sequence $(\mu_{i,j})$. Setting $\mathcal{I}=\mathcal{S}\cup\mathcal{T}$, and since $m$ nonzero coefficients are spread among $q$ sets, we have $|\mathcal{S}| \leq m -q$. Finally writing $\mu_i= \sum_j |\mathcal{I}|\mu_{ij}$ then yields the desired result.
\end{proof}

We then have the following corollary, producing a simpler instance of the previous theorem.

\begin{corollary}\label{th:approx-sf}
Let $\varepsilon >0$ and $V_i \in\reals^d$, $i=1,\ldots,n$ be a family of subsets of $\reals^d$. Suppose
\[
x = \sum_{i=1}^n\sum_{j=1}^{d+1} \lambda_{ij} v_{ij} ~\in ~\sum_{i=1}^n\Co\left(V_i\right)
\]
where $\lambda_{ij} \geq 0$ and $\sum_j \lambda_{ij}=1$. We write $R_v= \max_{\{ij:\lambda_{ij}\neq 1\}} \|\lambda_{ij} v_{ij}\|$ and  $R_\lambda= \max_{\{ij:\lambda_{ij}\neq 1\}} |\lambda_{ij}|$, for some norm $\|\cdot\|$ such that $(\mathbb{R}^{d},\|\cdot\|)$ is $(2,D)$-smooth. There exists a point $\bar x$ and an index set $\mathcal{S}\subset[1,n]$ such that 
\[
\bar x \in~ \sum_{[1,n]\setminus \mathcal{S}} V_i  ~ + ~  \sum_{i \in \mathcal{S}} \Co(V_i)
\quad
\mbox{with}
\quad
\|x-\bar x\| \leq  \sqrt{2d}\, \left(\frac{R_v}{R_\lambda} + M_V\right)  \varepsilon
\]
where $|\mathcal{S}|\leq  m - d$ with
\BEQ\label{eq:samp-ratio}
m=1+  2d\,\frac{c\,(D R_\lambda/\varepsilon)^2}{1+c\,(D R_\lambda/\varepsilon)^2}
\qquad
\mbox{and}
\qquad
M_V=\sup_{\substack{\|u\|_2\leq1\\v_i \in V_i}} \left\|\sum_i u_i v_i \right\|.
\EEQ
where $c>0$ is an absolute constant.
\end{corollary}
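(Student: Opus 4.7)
The plan is to derive the corollary as a specialization of Theorem~\ref{th:approx-sf-th}: fix the free parameters $\beta$, $\gamma$ and the theorem's error scale, invoke the theorem, then post-process the weighted point $\hat x$ returned there into a point $\bar x$ with unit multipliers in every summand. Concretely, I would invoke Theorem~\ref{th:approx-sf-th} with $\beta = R_\lambda/R_v$ and $\gamma = 1$, so that $\beta D R_v = DR_\lambda$ and $\gamma R_\lambda = R_\lambda$ and hence $R = \max\{\beta D R_v,\gamma R_\lambda\} = DR_\lambda$ (using $D\geq 1$ for any $(2,D)$-smooth norm). For the theorem's error scale I would take $\varepsilon_t = \sqrt{2d}\,\varepsilon/q$, so that the three prefactors of the form $q\varepsilon_t$ appearing in the theorem's bounds all collapse to the $q$-independent value $\sqrt{2d}\,\varepsilon$.

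With these choices Theorem~\ref{th:approx-sf-th} returns sets $\mathcal{S},\mathcal{T}\subset[1,n]$ with $q = |\mathcal{S}|+|\mathcal{T}|\leq d$, weights $\mu_i\geq 0$, and a point
\[
\hat x \in \sum_{[1,n]\setminus(\mathcal{S}\cup\mathcal{T})} V_i + \sum_{\mathcal{T}} \mu_i V_i + \sum_{\mathcal{S}} \mu_i \Co(V_i)
\]
satisfying $\|x-\hat x\|\leq \sqrt{2d}\,R_v\varepsilon/R_\lambda$ and $\|(\mu_i-1)_{i\in\mathcal{S}\cup\mathcal{T}}\|_2 \leq \sqrt{2d}\,\varepsilon$. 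I then construct $\bar x$ by retaining, for each $i\in\mathcal{S}\cup\mathcal{T}$, the same selection $w_i\in\Co(V_i)$ chosen by the theorem (a singleton $v_i\in V_i$ for $i\in\mathcal{T}$, a genuine convex combination $c_i\in\Co(V_i)$ for $i\in\mathcal{S}$) but replacing the multiplier $\mu_i$ by $1$. Then $\bar x \in \sum_{[1,n]\setminus\mathcal{S}} V_i + \sum_{\mathcal{S}}\Co(V_i)$ by construction, and $\hat x - \bar x = \sum_{i\in\mathcal{S}\cup\mathcal{T}} (\mu_i-1)\,w_i$. A short convex-combination argument (expressing $\sum_i u_i w_i$ as a mixture over ``pure'' selections $\sum_i u_i v_{i,\sigma(i)}$) shows that the supremum defining $M_V$ is unchanged when each $v_i\in V_i$ is relaxed to $w_i\in\Co(V_i)$, so $\|\hat x-\bar x\|\leq M_V\,\|(\mu_i-1)\|_2\leq M_V\sqrt{2d}\,\varepsilon$, and the triangle inequality gives $\|x-\bar x\|\leq \sqrt{2d}\,(R_v/R_\lambda + M_V)\,\varepsilon$.

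For the cardinality bound, substituting $R = DR_\lambda$ and $\varepsilon_t = \sqrt{2d}\,\varepsilon/q$ into the theorem's expression for $m$ turns the argument of $t/(1+t)$ into $c\tfrac{d+q}{2d}(DR_\lambda/\varepsilon)^2$, which saturates at $c(DR_\lambda/\varepsilon)^2$ when $q = d$. Using $d+q\leq 2d$ in the prefactor and the theorem's bound $|\mathcal{S}|\leq m_{\mathrm{th}} - q$ then yields exactly the $m$ defined in the corollary and the bound $|\mathcal{S}|\leq m-d$.

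The main obstacle is reconciling the fact that $q\leq d$ is an inequality with the fact that both the scale $\varepsilon_t \propto 1/q$ and the cardinality bound $|\mathcal{S}|\leq m-d$ are stated as if $q = d$. The clean fix is to \emph{pad} the input representation of $x$: whenever $q<d$, split a singleton selection $\lambda_{i,j} v_{i,j}$ into the trivial convex combination $\tfrac12 v_{i,j}+\tfrac12 v_{i,j}$, repeating until the index set $\mathcal{I}$ in the proof of Theorem~\ref{th:approx-sf-th} has size exactly $d$. This leaves $x$ unchanged, forces $q=d$ throughout the argument, and any padded index ending up in $\mathcal{S}$ whose convex combination collapses to a single $v_i$ can be moved out of $\mathcal{S}$ without enlarging either the error or the cardinality.
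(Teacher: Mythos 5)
Your proposal follows essentially the same route as the paper's proof: both specialize Theorem~\ref{th:approx-sf-th}, choose $\beta,\gamma$ so that the two terms in $R=\max\{\beta D R_v,\gamma R_\lambda\}$ balance (your $\beta=R_\lambda/R_v$, $\gamma=1$ with rescaled tolerance $\varepsilon_t=\sqrt{2d}\,\varepsilon/q$ is an equivalent reparametrization of the paper's choice $\beta R_v=\gamma R_\lambda$, $\gamma=q/\sqrt{d+q}$), and then absorb the residual terms $\sum_{i\in\mathcal{T}}(\mu_i-1)V_i+\sum_{i\in\mathcal{S}}(\mu_i-1)\Co(V_i)$ through $M_V$ and the triangle inequality, using $d+q\leq 2d$ to get the $\sqrt{2d}$ prefactor. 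Two remarks. First, your explicit mixture argument showing that the supremum defining $M_V$ is unchanged when each $v_i\in V_i$ is relaxed to $w_i\in\Co(V_i)$ fills in a step the paper leaves implicit. Second, you correctly flag something the paper glosses over entirely: when $q<d$ the theorem only yields $|\mathcal{S}|\leq m_{\mathrm{th}}-q$, and since $t/(1+t)<1$ one checks that $1+(d+q)\frac{t}{1+t}-q\leq 1+2d\frac{t}{1+t}-d$ forces $q\geq d$, so the stated bound $|\mathcal{S}|\leq m-d$ does not follow without some device like your padding; the paper's proof implicitly treats only the worst case $q=d$. One caveat on the fix: $R_v=\max_{\{ij:\lambda_{ij}\neq 1\}}\|\lambda_{ij}v_{ij}\|$ and $R_\lambda$ are representation-dependent, and splitting a singleton as $\frac12 v+\frac12 v$ can strictly increase both (e.g., $R_\lambda$ when all original non-unit coefficients equal $1/3$), so the corollary's conclusion, stated with the original radii, is not immediately recovered. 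This is repairable at the cost of constants: split padded singletons into $k$ equal pieces with $1/k\leq R_\lambda$, and note that taking $u=e_j$ in the definition of $M_V$ gives $\|v\|\leq M_V$ for all $v\in V_j$, whence $R_v^{\mathrm{new}}\leq\max\{R_v,\,R_\lambda M_V\}$ and the error bound degrades only to $\sqrt{2d}\,(R_v/R_\lambda+2M_V)\,\varepsilon$ — but as written the padding step is not quite free, so make this adjustment explicit.
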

\begin{proof} Theorem~\ref{th:approx-sf-th} means there exists $\hat x\in\reals^d$, coefficients $\mu_i\geq 0$ and index sets $\mathcal{S},\mathcal{T}\subset[1,n]$ such that
\BEAS
\hat x &\in&  \sum_{[1,n]\setminus (\mathcal{S}\cup\mathcal{T})} V_i ~ + ~  \sum_{i \in \mathcal{T}} \mu_{i} V_i ~ + ~  \sum_{i \in \mathcal{S}} \mu_{i} \Co(V_i)\\
{ }\\
&\subset& \sum_{[1,n]\setminus \mathcal{S}} V_i ~ + ~  \sum_{i \in \mathcal{S}} \Co(V_i) ~ + ~ \sum_{i \in \mathcal{T}} \left(\mu_{i}-1\right) V_i ~ + ~  \sum_{i \in \mathcal{S}} \left(\mu_{i}-1\right) \Co(V_i)\\
\EEAS
with 
\[
\textstyle \left(\sum_{i\in\mathcal{I}}\left(\mu_{i} - 1\right)^2\right)^{1/2} \leq q\; \varepsilon / \gamma.
\qquad
\mbox{and}
\qquad
\|x-\hat x\| \leq q\; \varepsilon/ \beta
\]
where $q\triangleq |\mathcal{S}|+|\mathcal{T}| \leq d$. Saturating the max term in $R$ in Theorem~\ref{th:approx-cara-high-banach} means setting $\beta R_v =\gamma R_\lambda$. Setting $\gamma=q/\sqrt{d+q}$ then yields
$\|x-\hat x\| \leq \sqrt{d+q} \frac{R_v}{R_\lambda} \varepsilon$ and 
\[
\textstyle \left(\sum_{i\in\mathcal{I}}\left(\mu_{i} - 1\right)^2\right)^{1/2} \leq \sqrt{d+q} \,\varepsilon.
\]
and the fact that 
\[
v \in \sum_{i \in \mathcal{T}} \left(\mu_{i}-1\right) V_i ~ + ~  \sum_{i \in \mathcal{S}} \left(\mu_{i}-1\right) \Co(V_i)
\]
means 
\[
\textstyle \|v\|\leq M_V  \left(\sum_{i\in\mathcal{I}}\left(\mu_{i} - 1\right)^2\right)^{1/2}
\]
and yields the desired result.
\end{proof}

The result of \citet{Aubi76} recalled in Proposition~\ref{prop:gap-sf} shows that the Shapley-Folkman theorem can be used in the bounds of Proposition~\ref{prop:gap} to ensure the set $\mathcal{S}$ is of size at most $m+1$, therefore providing an upper bound on the duality gap caused by the lack of convexity (see also \citep{Ekel99,Bert14}). We now study what happens to these bounds when use use the approximate Shapley-Folkman result in Corollary~\ref{th:approx-sf} instead of Theorem~\ref{th:sf}. Plugging these last results inside the duality gap bound in Theorem~\ref{th:gap-approx-bnd} yields the following result.

\begin{corollary}\label{th:gap-approx-sf}
Suppose the functions $f_i$ in~\eqref{eq:ncvx-pb} satisfy Assumption~\ref{as:fi}. There is a point $x^\star\in\reals^d$ at which the primal optimal value of~\eqref{eq:cvx-pb} is attained, and as in~\eqref{eq:zstar} we let 
\[
z^\star = \sum_{i=1}^n
\begin{pmatrix}
f_i^{**}(x^\star_i)\\
A_ix^\star_i
\end{pmatrix}
+
\begin{pmatrix}
0\\
w-b
\end{pmatrix}
=
\sum_{i=1}^n \sum_{j=1}^{m+2} \lambda_{ij}z_{ij} 
+
\begin{pmatrix}
0\\
w-b
\end{pmatrix}
\]
with $w \in \reals^{m}_+$ and $z_{ij}\in \mathcal{F}_i$, where $\lambda_{ij}\geq0$, $\sum_j \lambda_{ij}=1$. Call $R_v= \max_{\{ij:\lambda_{ij}\neq 1\}} \|\lambda_{ij} z_{ij}\|_2$ and $R_\lambda= \max_{\{ij:\lambda_{ij}\neq 1\}} |\lambda_{ij}|$. Let $\gamma >0$, we have the following bound on the solution of problem~\eqref{eq:p-ncvx-pb}
\[
\underbrace{\mathrm{h}_{CoP}(u_2(s))}_{\eqref{eq:p-cvx-pb}} ~\leq~ \underbrace{\mathrm{h}_{P}(u_2(s))}_{\eqref{eq:p-ncvx-pb}} ~\leq~ \underbrace{\mathrm{h}_{CoP}(0)}_{\eqref{eq:cvx-pb}} ~+~ \underbrace{|u_1(s)| +\max_{\beta_i\in[1,m+2]}~\left\{\sum_{i=1}^{n} \rho_{\beta_i}(f_i): \sum_{i=1}^n \beta_i = s \right\}}_{\mathrm{gap(s)}}.
\]
where
\BEQ\label{eq:approx-u}
\max\{|u_1(s)|,\|u_2(s)\|_2\} \leq\sqrt{2m}\, \left(R_v + R_\lambda M_V\right)  \gamma
\EEQ
with
\[
s=n+1+2m\,\frac{c}{\gamma^2+c}
\quad\mbox{and}
\quad
M_V=\sup_{\substack{\|u\|_2\leq1\\v_i \in \mathcal{F}_i}} \left\|\sum_i u_i v_i \right\|_2,
\]
for some absolute constant $c>0$.
\end{corollary}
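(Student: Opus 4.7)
The statement is essentially the composition of the approximate Shapley--Folkman theorem (Corollary~\ref{th:approx-sf}) with the generic approximate duality-gap bound (Theorem~\ref{th:gap-approx-bnd}), so the plan is to feed the output of the former directly into the latter and keep track of constants.

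First, I would start from the decomposition of the optimal dual point
\[
z^\star = \sum_{i=1}^n\sum_{j=1}^{m+2}\lambda_{ij} z_{ij} + \begin{pmatrix}0\\ w-b\end{pmatrix},
\qquad z_{ij}\in \mathcal{F}_i,\ \lambda_{ij}\ge 0,\ \sum_j \lambda_{ij}=1,
\]
which is available from Proposition~\ref{prop:gap} and the conic Carath\'eodory representation in $\overline{\Co(\mathcal{G})}\subset \reals^{m+1}$. The ambient dimension of the Minkowski sum $\sum_i \Co(\mathcal{F}_i)$ is $m+1$, so Corollary~\ref{th:approx-sf} applies directly with $d\leftarrow m+1$ and with $\|\cdot\|$ taken to be the Euclidean norm, which is $(2,1)$-smooth.

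Next, I would match parameters so that the sparsity and error bounds take the stated form. Setting $\varepsilon = R_\lambda\,\gamma$ inside Corollary~\ref{th:approx-sf} with $D=1$ turns the sampling ratio into
\[
\frac{c\,(R_\lambda/\varepsilon)^2}{1+c\,(R_\lambda/\varepsilon)^2} = \frac{c}{\gamma^2+c},
\]
so that the total number of nonzero coefficients in the approximate decomposition is $s = n + 1 + 2m\,\tfrac{c}{\gamma^2+c}$ (the $n$ accounts for the blocks $i\notin\mathcal{S}$ contributing one coefficient each, and the remaining $1+2m\,\tfrac{c}{\gamma^2+c}-(m+1)$ absorbed into the $\mathcal{S}$ blocks). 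The same corollary yields an approximant $\bar z$ with
\[
\bigl\|z^\star - (0,w-b) - \bar z\bigr\|_2 \;\le\; \sqrt{2(m+1)}\,\Bigl(\tfrac{R_v}{R_\lambda}+M_V\Bigr)\varepsilon \;=\; \sqrt{2m}\,(R_v + R_\lambda M_V)\,\gamma
\]
up to the claimed constants, where $M_V$ is the quantity defined in \eqref{eq:samp-ratio} applied to the family $\{\mathcal{F}_i\}$.

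Finally, I would split the error vector along the epigraph coordinates: its first component is $u_1(s)\in\reals$ (the perturbation on the objective) and the last $m$ components give $u_2(s)\in\reals^m$ (the perturbation on the constraints). Both satisfy $\max\{|u_1(s)|,\|u_2(s)\|_2\}\leq \|z^\star-(0,w-b)-\bar z\|_2$, which yields \eqref{eq:approx-u}. Plugging $\bar z$ into the hypotheses of Theorem~\ref{th:gap-approx-bnd} (it is exactly an $s$-sparse conic approximation of $z^\star$ of the required form) gives the chain of inequalities in the statement, with the $\rho_{\beta_i}$-term unchanged from that theorem.

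The only non-routine step is the parameter matching in the second paragraph: one has to choose $\varepsilon$, interpret the output of Corollary~\ref{th:approx-sf} in terms of $\sum_i \Card(\lambda_i)$ as required by Theorem~\ref{th:gap-approx-bnd}, and check that the Euclidean radius bound on the full error vector simultaneously controls $|u_1|$ and $\|u_2\|_2$. Everything else is direct substitution.
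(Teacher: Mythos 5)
Your proposal is correct and follows exactly the paper's route: the paper's own proof is the one-line remark that the corollary ``is a direct consequence of Corollary~\ref{th:approx-sf}'' (applied to the sets $\mathcal{F}_i\subset\reals^{m+1}$ and fed into Theorem~\ref{th:gap-approx-bnd}), and your parameter matching $\varepsilon=R_\lambda\gamma$, $D=1$ for the Euclidean norm, and the split of the error vector into $(u_1,u_2)$ makes explicit precisely the substitutions the authors leave implicit. The residual $\sqrt{2(m+1)}$ versus $\sqrt{2m}$ slack you flag is present in the paper's own statement, so your ``up to the claimed constants'' caveat is apt rather than a gap.
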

\begin{proof}
This is a direct consequence of Corollary~\ref{th:approx-sf}.
\end{proof}
% AA: proof need polishing here.

% Note that when the lambdas are uniform (and small), they reduce the size of R.
Once again, we can take $m$ to be the number of active inequality constraints at $x^\star$. Note that in practice, not all solutions $z^*$ are good starting points for the approximation result described above. Obtaining a good solution typically involves a ``purification step'' along the lines of \citep{Udel16} for example.

\section{Separable Constrained Problems}\label{s:bnds-const}
Here, we briefly show how to extend our previous to problems with separable {\em nonlinear} constraints. We now focus on a more general formulation of optimization problem~\eqref{eq:ncvx-pb}, written
\BEQ\label{eq:ncvx-pb-const}\tag{cP}
\BA{ll}
\mbox{minimize} & \sum_{i=1}^{n} f_i(x_i) \\
\mbox{subject to} & \sum_{i = 1}^n g_i(x_i) \leq b,\\
& x_i \in Y_i, \quad i=1,\ldots,n,
\EA\EEQ
where the $g_i$'s take values in $\reals^m$. We assume that the functions $g_i$ are lower semicontinuous. Since the constraints are not necessarily affine anymore, we cannot use the convex envelope to derive the dual problem. The dual now takes the generic form
\BEQ\label{eq:dual-pb-const}\tag{cD}
\sup_{\lambda \geq 0} \Psi(\lambda),
\EEQ
where $\Psi$ is the dual function associated to problem~\eqref{eq:ncvx-pb-const}. Note that deriving this dual explicitly may be hard. As for problem~\eqref{eq:ncvx-pb}, we will also use the perturbed version of problem~\eqref{eq:ncvx-pb-const}, defined as
\BEQ\label{eq:p-ncvx-pb-const}\tag{p-cP}
\BA{rll}
\mathrm{h}_{cP}(u) \triangleq &\mbox{min.} & \sum_{i=1}^{n} f_i(x_i) \\
&\mbox{s.t.} & \sum_{i = 1}^n g_i(x_i) - b \leq u\\
&& x_i \in Y_i, \quad i=1,\ldots,n,
\EA\EEQ
in the variables $x_i\in\reals^{d_i}$, with perturbation parameter $u\in\reals^m$. We let $\mathrm{h}_{cD} \triangleq \mathrm{h}_{cP}^{**}$ and in particular, solving for $\mathrm{h}_{cD}(0)$ is equivalent to solving problem~\eqref{eq:dual-pb-const}. Using these new definitions, we can formulate a more general bound for the duality gap (see \cite[Appendix I, Thm. 3]{Ekel99} for more details).
\begin{proposition}
  Suppose the functions $f_i$ and $g_i$ in~\eqref{eq:ncvx-pb-const} are such that all $(f_i + \ones^{\top} g_i)$ satisfy Assumption~\ref{as:fi}. Then, one has
\[
  \mathrm{h}_{cD}((m + 1)\bar{\rho}_g) \leq \mathrm{h}_{cP}((m + 1) \bar{\rho}_g) \leq \mathrm{h}_{cD}(0) + (m + 1) \bar{\rho}_f,
\]
where $\bar{\rho}_f = \sup_{i \in [1,n]} \rho(f_i)$ and $\bar{\rho}_g = \sup_{i \in [1, n]} \rho(g_i)$.
\end{proposition}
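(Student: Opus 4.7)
The plan is to lift problem~\eqref{eq:ncvx-pb-const} to its joint objective-constraint epigraph and replay the Shapley-Folkman argument behind Proposition~\ref{prop:gap-sf} in the resulting $(m+1)$-dimensional space. For each $i$, I would introduce
\[
\mathcal{F}_i \triangleq \bigl\{(t, y) \in \reals \times \reals^m : \exists\, x_i \in Y_i \text{ with } f_i(x_i) \leq t,\ g_i(x_i) \leq y\bigr\} \subset \reals^{m+1},
\]
so that $\mathrm{h}_{cP}(u)$ is the infimum of $t$ over $(t, b+u) \in \sum_i \mathcal{F}_i$, while $\mathrm{h}_{cD}(u) = \mathrm{h}_{cP}^{**}(u)$ is the analogous infimum taken over $\overline{\Co}(\sum_i \mathcal{F}_i)$. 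The coupled assumption that each $f_i + \ones^{\top} g_i$ satisfies Assumption~\ref{as:fi} is exactly what one needs to guarantee closedness of the $\mathcal{F}_i$, the identity $\overline{\Co}(\sum_i \mathcal{F}_i) = \sum_i \overline{\Co}(\mathcal{F}_i)$, and attainment of the infimum defining $\mathrm{h}_{cD}(0)$ at some $(t^\star, y^\star)$ with $t^\star = \mathrm{h}_{cD}(0)$ and $y^\star \leq b$.

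Applying Theorem~\ref{th:sf} in dimension $m+1$ then yields a decomposition $(t^\star, y^\star) = \sum_i (t_i, y_i)$ with $(t_i, y_i) \in \mathcal{F}_i$ for $i \notin \mathcal{S}$ and $(t_i, y_i) \in \Co(\mathcal{F}_i)$ for $i \in \mathcal{S}$, where $|\mathcal{S}| \leq m+1$. For each $i \notin \mathcal{S}$, pick $\hat{x}_i \in Y_i$ realizing $f_i(\hat{x}_i) \leq t_i$ and $g_i(\hat{x}_i) \leq y_i$. For each $i \in \mathcal{S}$, Carath\'eodory inside $\Co(\mathcal{F}_i)$ gives $(t_i, y_i) = \sum_j \alpha_{ij}\bigl(f_i(x_{ij}) + \sigma_{ij}^0,\, g_i(x_{ij}) + \sigma_{ij}\bigr)$ with $\alpha_{ij} \geq 0$, $\sum_j \alpha_{ij} = 1$, slacks $\sigma_{ij}^0, \sigma_{ij} \geq 0$ and $x_{ij} \in Y_i$. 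Setting $\hat{x}_i \triangleq \sum_j \alpha_{ij} x_{ij}$, the refined nonconvexity estimate recalled below Definition~\ref{def:rho} yields
\[
f_i(\hat{x}_i) \leq \sum_j \alpha_{ij} f_i(x_{ij}) + \rho(f_i) \leq t_i + \bar{\rho}_f,
\]
and, componentwise, $g_i(\hat{x}_i) \leq y_i + \bar{\rho}_g$. Summing over $i$ and using $|\mathcal{S}| \leq m+1$ gives $\sum_i f_i(\hat{x}_i) \leq \mathrm{h}_{cD}(0) + (m+1)\bar{\rho}_f$ and $\sum_i g_i(\hat{x}_i) \leq b + (m+1)\bar{\rho}_g$, so that $\hat{x}$ is feasible for~\eqref{eq:p-ncvx-pb-const} at perturbation $(m+1)\bar{\rho}_g$ and witnesses $\mathrm{h}_{cP}((m+1)\bar{\rho}_g) \leq \mathrm{h}_{cD}(0) + (m+1)\bar{\rho}_f$. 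The remaining inequality $\mathrm{h}_{cD}((m+1)\bar{\rho}_g) \leq \mathrm{h}_{cP}((m+1)\bar{\rho}_g)$ is immediate from $\mathrm{h}_{cD} = \mathrm{h}_{cP}^{**} \leq \mathrm{h}_{cP}$.

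The main obstacle, in my view, lies in the bookkeeping around the coupled hypothesis on $f_i + \ones^{\top} g_i$: it is used twice, first to secure closedness of the epigraph Minkowski sum (so that Shapley-Folkman actually applies to $\overline{\Co}(\sum_i \mathcal{F}_i)$ and the dual value is attained), and then to legitimize the scalar $\rho$-inequality on the vector-valued $g_i$. I would discharge the second point by applying the refined scalar estimate from Definition~\ref{def:rho} to each component $e_k^{\top} g_i$ and showing that coercivity and lower semicontinuity of the combined $f_i + \ones^{\top} g_i$ pass down to each component on the relevant sublevel set. Once these technicalities are handled, the rest is a direct transcription of the proof of Proposition~\ref{prop:gap-sf}, with the separable nonlinear constraints encoded as the last $m$ coordinates of the joint epigraph rather than through the convex envelope.
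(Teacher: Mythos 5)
Your proposal is correct and takes essentially the same route as the paper: the paper's own proof is a one-line pointer to Proposition~\ref{prop:gap-sf} and to \citet{Ekel99} (Appendix~I), ``using the graph of $\mathrm{h}_{cP}$ instead of the $\mathcal{F}_i$'s,'' which is precisely the construction you carry out --- lifted per-block sets $\left\{(t,y):\, f_i(x_i)\leq t,\ g_i(x_i)\leq y\right\}$ in $\reals^{m+1}$, Shapley--Folkman giving $|\mathcal{S}|\leq m+1$, componentwise $\rho$-estimates for the convexified blocks, and $\mathrm{h}_{cD}=\mathrm{h}_{cP}^{**}\leq \mathrm{h}_{cP}$ for the first inequality. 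The technicalities you flag (closedness of the lifted sets, commuting closure with the Minkowski sum, and keeping $\hat{x}_i$ in the relevant domains so the $\rho$-inequality applies) are exactly the ones the paper leaves implicit and defers to \citet{Ekel99}, so your write-up is a faithful expansion of the paper's argument rather than a genuinely different proof.
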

\begin{proof}
The global reasoning is similar to Proposition~\ref{prop:gap-sf}, using the graph of $\mathrm{h}_{cP}$ instead of the $\mathcal{F}_i$'s. 
\end{proof}

We then get a direct extension of Corollary~\ref{th:gap-approx-sf}, as follows.

\begin{corollary}\label{cor:gap-approx-sf}
Suppose the functions $f_i$ and $g_i$ in~\eqref{eq:ncvx-pb-const} are such that all $(f_i + \ones^{\top} g_i)$ satisfy Assumption~\ref{as:fi}. There exist points $x^\star_{ij}\in\reals^{d_i}$ and $w \in \reals^m$ such that
  \[
    z^{\star} = \sum_{i = 1}^n \sum_{j = 1}^{m + 2} \lambda_{ij} (f_i(x^{\star}_{ij}), g_i(x^{\star}_{ij})) + (0, -b + w),
  \]
attains the minimum in~\eqref{eq:dual-pb-const}, where $\lambda_{ij}\geq0$ and $\sum_j \lambda_{ij}=1$. Call $R_v= \max_{\{ij:\lambda_{ij}\neq 1\}} \|\lambda_{ij} z_{ij}\|_2$ and $R_\lambda= \max_{\{ij:\lambda_{ij}\neq 1\}} |\lambda_{ij}|$. Let $\gamma >0$, we have the following bound on the solution of problem~\eqref{eq:ncvx-pb-const}
\BEAS
\underbrace{\mathrm{h}_{cD}(u_2(s)+(m+1)\bar{\rho}_g\ones)}_{\eqref{eq:dual-pb-const}} &\leq& \underbrace{\mathrm{h}_{P}(u_2(s)+(m+1)\bar{\rho}_g\ones)}_{\eqref{eq:p-ncvx-pb-const}} \\
&\leq& \underbrace{\mathrm{h}_{cD}(0)}_{\eqref{eq:dual-pb-const}} ~+~ \underbrace{|u_1(s)| +\max_{\beta_i\in[1,m+2]}~\left\{\sum_{i=1}^{n} \rho_{\beta_i}(f_i): \sum_{i=1}^n \beta_i = s \right\}}_{\mathrm{gap(s)}}.
\EEAS
where $\bar{\rho}_g = \sup_{i \in [1, n]} \rho(g_i)$ and
\[
\max\{|u_1(s)|,\|u_2(s)\|_2\} \leq\sqrt{2m}\, \left(R_v + R_\lambda M_V\right)  \gamma
\]
with
\[
s=n+1+2m\,\frac{c}{\gamma^2+c}
\quad\mbox{and}
\quad
M_V=\sup_{\substack{\|u\|_2\leq1\\v_i \in \mathcal{F}_i}} \left\|\sum_i u_i v_i \right\|_2,
\]
for some absolute constant $c>0$.
\end{corollary}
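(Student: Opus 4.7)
The plan is to combine the approximate Shapley-Folkman compression of Corollary~\ref{th:approx-sf} with the constrained duality gap bound in the preceding proposition, following the same template as Theorem~\ref{th:gap-approx-bnd} and Corollary~\ref{th:gap-approx-sf}, but substituting the nonlinear graph sets $\mathcal{F}_i = \{(f_i(x_i), g_i(x_i)) : x_i \in Y_i\} \subset \reals^{m+1}$ for the affine-constraint epigraphs used in the unconstrained analysis.

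First, I would observe that the preceding proposition already guarantees a dual-optimal point $z^\star$ lying in $\sum_i \Co(\mathcal{F}_i) + (0,-b) + \reals_+^{m+1}$, so applying Carath\'eodory inside each $\Co(\mathcal{F}_i)$ (valid since $\mathcal{F}_i \subset \reals^{m+1}$) yields the explicit representation $z^\star = \sum_{i,j}\lambda_{ij}(f_i(x^\star_{ij}), g_i(x^\star_{ij})) + (0,w-b)$ with $\sum_j \lambda_{ij} = 1$ stated in the corollary. Applying Corollary~\ref{th:approx-sf} to this representation with ambient dimension $m+1$ then produces a compressed approximation $\bar z$ using only $s = n + 1 + 2m\, c/(\gamma^2 + c)$ nonzero coefficients, with error $u = z^\star - \bar z = (u_1,u_2)$ satisfying $\max\{|u_1|,\|u_2\|_2\} \leq \sqrt{2m}(R_v + R_\lambda M_V)\gamma$ for the $R_v, R_\lambda$ specified in the statement.

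Second, I would reproduce the chain of inequalities in the proof of Theorem~\ref{th:gap-approx-bnd}, treating $u_1$ as an objective slack and $u_2$ as a constraint perturbation for~\eqref{eq:p-ncvx-pb-const}. Indices whose $\bar\lambda_{ij}$ reduces to a single unit contribute no convexification loss; the remaining indices contribute the refined $\rho_{\beta_i}(f_i)$ penalty on the objective, with $\beta_i$ the support size at~$i$, and the combinatorial optimum $\max\{\sum_i \rho_{\beta_i}(f_i) : \sum_i \beta_i = s\}$ gives the stated $\mathrm{gap}(s)$ term. Finally I would invoke the preceding proposition on the $u_2$-perturbed problem to absorb the constraint-side nonconvexity of the $g_i$'s uniformly through the $(m+1)\bar\rho_g\ones$ shift that appears inside $\mathrm{h}_{cD}$ and $\mathrm{h}_P$ on the left-hand side.

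The main technical obstacle, compared with the affine-constraint setting of Corollary~\ref{th:gap-approx-sf}, is that the constraint-side error is not carried entirely by $u_2$: the convex combination $\sum_j \lambda_{ij} g_i(x^\star_{ij})$ need not equal $g_i$ evaluated at a convex combination of the $x^\star_{ij}$'s, so there is an irreducible constraint-side nonconvexity penalty of order $\rho(g_i)$. This is precisely what forces the $(m+1)\bar\rho_g\ones$ offset inside the arguments of $\mathrm{h}_{cD}$ and $\mathrm{h}_P$, and it is handled cleanly by invoking the preceding proposition on the already perturbed problem rather than on~\eqref{eq:ncvx-pb-const} itself; the approximate Shapley-Folkman compression step proceeds identically to the affine case because it only uses the Minkowski sum structure of the graph sets $\mathcal{F}_i$, not any affine property of the constraints.
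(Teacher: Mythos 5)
Your proposal is correct and follows exactly the route the paper intends: the paper gives no explicit proof here, declaring the corollary a ``direct extension'' of Corollary~\ref{th:gap-approx-sf} via the preceding proposition, and your outline fills in precisely that composition --- Carath\'eodory representation of the dual-optimal $z^\star$ over the graph sets $\mathcal{F}_i=\{(f_i(x_i),g_i(x_i))\}$, compression via Corollary~\ref{th:approx-sf} (which, as you note, only needs the Minkowski sum structure, not affinity), and the inequality chain of Theorem~\ref{th:gap-approx-bnd}. You also correctly isolate the one genuinely new point relative to the affine case, namely that $g_i(\sum_j\lambda_{ij}x^\star_{ij})$ may exceed $\sum_j\lambda_{ij}g_i(x^\star_{ij})$ by up to $\rho(g_i)\leq\bar\rho_g$ per index in $\mathcal{S}$, which with $|\mathcal{S}|\leq m+1$ is what produces the $(m+1)\bar\rho_g\ones$ shift inside $\mathrm{h}_{cD}$ and $\mathrm{h}_{cP}$.
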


For simplicity, we have used coarse bounds on $\rho(g_i)$ but these can be relaxed to stable quantities using techniques matching those used on the objective in the previous sections.

\section*{Acknowledgements}
% AA: add thanks to Ale and Carlo for discussion.
AA is at CNRS \& d\'epartement d'informatique, \'Ecole normale sup\'erieure, UMR CNRS 8548, 45 rue d'Ulm 75005 Paris, France,  INRIA and PSL Research University. The authors would like to acknowledge support from the {\em Optimization \& Machine Learning} joint research initiative with the {\em fonds AXA pour la recherche} and Kamet Ventures as well as a Google focused award. The authors would also like to thank Alessandro Rudi and Carlo Ciliberto for very helpful discussions on multitask problems.

{\small \bibliographystyle{plainnat}
\bibsep 1ex
\bibliography{MainPerso.bib,refs}}

\section{Appendix}
We now recall some background results on extended formulations and Bennett-Sterfling inequalities smooth Banach spaces.

\subsection{Extended formulations}
An {\em extended formulation} of the constraint polytope 
\[
\mathcal{P}=\{x\in\reals^d:Ax\leq b\}
\] 
writes it as the projection of another, potentially simpler, polytope with
\[
\mathcal{P}=\{x\in\reals^d: Bx+Cu \leq d,  u\in\reals^m\}
\]
where $B\in\reals^{q\times d}$, $C\in\reals^{q \times m}$ and $d\in\reals^{q}$. The {\em extension complexity} $xc(\mathcal{P})$ is the minimum number of inequalities of an extended formulation of the polytope $\mathcal{P}$. A fundamental result by \citep[Th.\,3]{Yann91} connects extended formulations and nonnegative matrix factorization. Suppose the vertices of a polytope $\mathcal{P}=\{x\in\reals^d:Ax\leq b\}$ are given by $\{v_1,\ldots,v_p\}$, we write $S$ the {\em slack matrix} of $\mathcal{P}$, with 
\[
S_{ij}=b_i-(Av_j)_i,\quad\mbox{for $i=1,\ldots,m$, $j=1,\ldots,p$.}
\]
By construction, $S$ is a nonnegative matrix. \citep[Th.\,3]{Yann91} shows that 
\[
\{x\in\reals^d:Ax+Fy=b,y\geq 0\}
\] 
is an extended formulation of $\mathcal{P}$ if and only if $S$ can be factored as $S=FV$ where $F\in\reals_+^{m\times q}$ and $V\in\reals_+^{q\times p}$ are both nonnegative. In particular, the smallest extended formulation of $\mathcal{P}$ corresponds to the lowest rank NMF of $S$, which means $xc(\mathcal{P})=\Rank_+(S)$, the nonnegative rank of $S$. 

Usually, the smallest representation is then found by simplifying away the equality constraints in $Ax+Fy=b$ to keep only linear inequalities. This last operation cannot work in our context here since it might introduce coupled variables in the sum of terms in the objective. So the best representation in the context of this pape does not correspond to the classical one with smallest extension complexity.

While the nonnegative rank is again an unstable quantity, stable (approximate) versions of this result can be defined using {\em nested polytopes} \citep{Pash12,Brau12,Gill12}. Given polytopes $\mathcal{P},\mathcal{Q}\in\reals^d$, an extended formulation of the pair $(\mathcal{P},\mathcal{Q})$ is a polytope 
\[
\mathcal{K}=\max\{x \in\reals^d : Ax+Fy=b,y\geq 0\}
\]
such that $\mathcal{P}\subset \mathcal{K} \subset \mathcal{Q}$. Furthermore, suppose $\mathcal{P}=\Co(\{v_1,\ldots,v_p\})$ and $\mathcal{Q}=\{x\in \reals^d : Ax\leq b \}$, defining the slack matrix of the pair $(\mathcal{P},\mathcal{Q})$ as $S_{ij}=b_i-(Av_j)_i$, for $i=1,\ldots,m$, $j=1,\ldots,p$, the result in \cite[Th.\,1]{Brau12} shows that the extension complexity of the pair satisfies $xc(\mathcal{P},\mathcal{Q})\leq\Rank_+(S)+1$.

\subsection{Bennett-Sterfling Inequalities in (2,D) Smooth Banach Spaces}\label{ssec:Bennet_Sterfling}
We prove a Bennett-Sterfling inequality in Theorem \ref{th:Bennet_Sterfling} below. This concentration inequality allows to rewrite the bound involving the quantity $R$ in Theorem \ref{th:approx-cara-high} with a term taking into account the variance of $V$, hence leading to an approximate Carath\'eodory version for high sampling ratio and low variance.

Consider $V=\{\vv_1;\ldots;\vv_N\}$, a set of $N$ vectors in a $(2,D)-$Banach space with norm $||\cdot||$ and $V_1,\ldots, V_n,$ the random variables resulting from 	a sampling without replacement. $ R_v \triangleq \sup_i ||\vv_i||$ is the \textit{range} of $V$. We introduce a specific notion of variance related to that sampling scheme as follows
\BEQ\label{def:variance_without_replacement}
\sigma_m \triangleq \frac{1}{\sqrt{\sum_{k=1}^{m}{\frac{1}{(N-k)^2}}}}\Big\vert\Big\vert  \Big( \sum_{k=1}^{m}{\frac{1}{(N-k)^2} \mathbb{E}_{k-1}||V_k-\mathbb{E}_{k-1}(V_k)||^2\Big)^{1/2}} \Big\vert\Big\vert_{\infty}~,
\EEQ
where we write $||\cdot||_{\infty}$ for essential supremum. We identify it as a variance because it is a convex combination of the terms $\mathbb{E}_{k-1}||V_k-\mathbb{E}_{k-1}(V_k)||^2$. For $k=1$, it is exactly the variance of $V$, while when $k=N-1$ it is not much different from the diameter of the set $V$. This is the natural notion algebraically arising from the sampling without replacement. Nevertheless, one can notice that when the index $k$ increases the weights also do, thus putting more emphasis on diameter-like measures rather than on variance-like measures.

Our goal is to bound, the following probability using a function depending on both $\sigma_m^2$ and $R_v$
\begin{eqnarray}\label{eq:Bernstein-Sterfling}
\mathbb{P}\Big(\Big\lvert\Big\lvert\frac{1}{m}\sum_{i=1}^{m}{V_i} - \mu\Big\lvert\Big\lvert\leq \epsilon\Big)~.
\end{eqnarray}
We call it \textit{Sterfling} because the quality of the bound will depend on the sampling ratio. \cite{Schn16} shows an Hoeffding-Sterfling bound (i.e. not depending on $\sigma^2$) on $(2,D)-$Banach spaces, while \citep{Bard15} provided a Bernstein-Sterfling bound for real-valued random variable. Here we expand the result of \citep{Schn16} to the case of Bennet-Sterfling inequality in $(2,D)-$Banach spaces. We exploit the forward martingale \citep{Serf74,Bard15,Schn16} associated to the sampling without replacement and plug it into a sligthly modified result from \citep{pinelis1994optimum}.

For completeness, we recall the definition of $(2,D)-$ Banach spaces \citep[Definition 3]{Schn16} and refer to \citep[section 3]{Schn16} for more details. 
\begin{definition}\label{def:2D_smooth}
A  Banach space $(\mathcal{B},||\cdot||)$ is $(2,D)-$smooth if it a Banach space and there exists $D>0$ such that
\[
||\xx+\yy||^2+||\xx-\yy||^2\leq 2 ||\xx||^2 + 2r||\yy||^2~,
\]
for all $\xx,\yy\in\mathcal{B}$.
\end{definition}

Using Banach spaces allows to endow our space with non-Euclidean norms which can lead to important gains in measuring the variance.

\subsubsection{Forward Martingale when Sampling without Replacement}
Write $\bar{\vv} = \frac{1}{N}\sum_{i=1}^{N}{\vv_i}$ and consider $(M_k)_{k\in\mathbb{N}}$ the following random process
\begin{eqnarray}\label{eq:def_martingale}
M_k = \left\{
    \begin{array}{ll}
        \frac{1}{N-k}\sum_{i=1}^{k}{(V_i - \bar{\vv} )} & 1\leq k\leq m\\
        M_n & \text{for } k > m~.
    \end{array}
\right.
\end{eqnarray}
It is a standard result (when $m=N-1$) that $(M_k)_{k\in\mathbb{N}}$ defines a forward martingale \citep{Serf74,Bard15,Schn16} w.r.t. the filtration $(\mathcal{F}_k)_{k\in\mathbb{N}}$ defined as:
\begin{eqnarray}\label{eq:def_filtration}
\mathcal{F}_k = \left\{
    \begin{array}{ll}
        \sigma(V_1,\ldots,V_k) & 1\leq k\leq m\\
        \sigma(V_1,\ldots,V_n) & \text{for } k > m~.
    \end{array}
\right.
\end{eqnarray}

In fact the martingale defined in \eqref{eq:def_martingale} for some $m_0$ is also the stopped martingale at $m_0$ of the martingale in \eqref{eq:def_martingale} defined for $m=N-1$ (which corresponds to the martingale studied in \citep[Lemma 1]{Schn16}).

\begin{lemma}\label{lem:proof_martingale}
For $m\in[N-1]$, $(M_k)_{k\in\mathbb{N}}$ as defined in \eqref{eq:def_martingale} is a forward martingale with respect to the filtration $(\mathcal{F}_k)_{k\in\mathbb{N}}$ in \eqref{eq:def_filtration}.
\end{lemma}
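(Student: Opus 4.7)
The plan is to verify the three defining properties of a martingale: $\mathcal{F}_k$-measurability, integrability, and the conditional expectation identity $\mathbb{E}[M_{k+1}\mid \mathcal{F}_k]=M_k$. Measurability is immediate from the definition of $M_k$ as a deterministic function of $V_1,\ldots,V_k$ (for $k\leq m$) or of $V_1,\ldots,V_m$ (for $k>m$). Integrability follows from the fact that each $V_i$ takes values in the finite set $V=\{\vv_1,\ldots,\vv_N\}$, so $\|M_k\|$ is uniformly bounded by $2 R_v\,k/(N-k)$ for $k\leq m<N$.

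The martingale identity is trivial for $k\geq m$ since $M_{k+1}=M_k=M_m$ by definition. For $1\leq k<m$, the key computation is the conditional expectation of the next sample. Since we sample without replacement, conditionally on $\mathcal{F}_k$ the variable $V_{k+1}$ is uniformly distributed over the $N-k$ elements of $V$ not yet drawn, giving
\[
\mathbb{E}[V_{k+1}\mid\mathcal{F}_k]=\frac{1}{N-k}\left(N\bar\vv-\sum_{i=1}^{k}V_i\right),
\]
hence $\mathbb{E}[V_{k+1}-\bar\vv\mid\mathcal{F}_k]=-\tfrac{1}{N-k}\sum_{i=1}^{k}(V_i-\bar\vv)$. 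Plugging this into the definition of $M_{k+1}$ gives
\[
\mathbb{E}[M_{k+1}\mid\mathcal{F}_k]=\frac{1}{N-k-1}\left(\sum_{i=1}^{k}(V_i-\bar\vv)-\frac{1}{N-k}\sum_{i=1}^{k}(V_i-\bar\vv)\right)=\frac{1}{N-k}\sum_{i=1}^{k}(V_i-\bar\vv)=M_k,
\]
which is exactly the martingale property.

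Alternatively, and more briefly, one can observe that the process defined in \eqref{eq:def_martingale} is nothing more than the original Serfling forward martingale (the case $m=N-1$, already treated in \citep[Lemma 1]{Schn16}) stopped at the deterministic time $m$. Since stopping a martingale at a bounded deterministic (and a fortiori stopping) time preserves the martingale property, this yields the result immediately once the $m=N-1$ case is invoked.

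The only genuine subtlety is the correct identification of the conditional law of $V_{k+1}$ under sampling without replacement; everything else is algebraic manipulation. I would therefore spell out the uniform-on-remaining-items step carefully (the natural source of mistakes) and then let the cancellation $\tfrac{1}{N-k-1}\cdot\tfrac{N-k-1}{N-k}=\tfrac{1}{N-k}$ close the argument.
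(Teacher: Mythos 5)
Your proof is correct and follows essentially the same route as the paper: the conditional-expectation computation you spell out (uniform law of $V_{k+1}$ on the remaining items, then the cancellation $\tfrac{1}{N-k-1}\cdot\tfrac{N-k-1}{N-k}=\tfrac{1}{N-k}$) is exactly the computation the paper delegates to \citep[Lemma 1]{Schn16} for $1\leq k\leq m$, while the trivial case $k>m$ is handled identically. Your alternative stopped-martingale observation also mirrors the remark the paper itself makes just before the lemma, so there is nothing genuinely different here.
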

\begin{proof}
For $1\leq k \leq m$, it is exactly the same computations as in \citep[Lemma 1.]{Schn16}. By definition, for $k > m$
\[
\mathbb{E}(M_k~|~\mathcal{F}_{k-1}) = \mathbb{E}(M_m~|~\mathcal{F}_m) = M_m = M_{k-1}~.
\]
\end{proof}

Importantly we also have the two following relations \citep[(3) and (5)]{Schn16}
\BEA
M_k-M_{k-1} = \frac{V_k-\mathbb{E}_{k-1}(V_k)}{N-k}\label{eq:rewrite}\\
||M_k-M_{k-1}||\leq \frac{R_v}{N-k}\label{eq:bound}~.
\EEA

\subsubsection{Bennet for Martingales in Smooth Banach Spaces}
We recall a sligthly modified version of \citep[Theorem 3.4.]{pinelis1994optimum}. This theorem is the analogous on martingales evolving on Banach spaces of Bennet concentration inequality for sums of real independent random variables.

\begin{theorem}[Pinelis]\label{th:pinelis_bennet}
Suppose $(M_k)_{k\in\mathbb{N}}$ is a martingale of a $(2,D)-$smooth separable Banach space and that there exists $(a,b)\in\mathbb{R}_{+}^*$ such that
\BEAS
\big\vert\big\vert\sup_k{||M_k-M_{k-1}||} \big\vert\big\vert_{\infty} &\leq& a\\
\big\vert\big\vert    \big( \sum_{j=1}^{\infty}{\mathbb{E}_{j-1}||M_j-M_{j-1}||^2} \big)^{1/2}   \big\vert\big\vert_{\infty} &\leq& b/D~,
\EEAS
then for all $\eta \geq 0$,
\[
\mathbb{P}(\sup_k{||M_k||}\geq \eta) \leq 2 \exp\big(-\frac{\eta^2}{2(b^2+\eta a/3)}\big)~.
\]
\end{theorem}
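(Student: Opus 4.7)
The plan is to follow the classical template for Bennett-type martingale concentration inequalities: build an exponential supermartingale from $\|M_k\|$, apply Doob's maximal inequality, and optimize in the free parameter. The role of the $(2,D)$-smoothness is to replace the inner-product identity $\|x+y\|^2 = \|x\|^2 + 2\langle x,y\rangle + \|y\|^2$ that trivializes the proof in the Hilbert setting.

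First, I would extract from Definition~\ref{def:2D_smooth} the one-sided relation $\|x+y\|^2 + \|x-y\|^2 \leq 2\|x\|^2 + 2D^2\|y\|^2$, and, by a standard symmetrization with an independent Rademacher-type sign applied to the martingale increment, derive the conditional bound
\[
\mathbb{E}_{k-1}\|M_k\|^2 \leq \|M_{k-1}\|^2 + D^2\, \mathbb{E}_{k-1}\|M_k - M_{k-1}\|^2,
\]
using $\mathbb{E}_{k-1}(M_k - M_{k-1}) = 0$. Iterating this for the even powers $\|\cdot\|^{2p}$ (which is the usual way one gets an exponential bound in smooth Banach spaces) yields the moment-generating-function comparison
\[
\mathbb{E}_{k-1}\cosh(\lambda \|M_k\|) \leq \cosh(\lambda\|M_{k-1}\|)\,\exp\!\left(\tfrac{D^2}{2}\lambda^2\,\psi(\lambda a)\,\mathbb{E}_{k-1}\|M_k-M_{k-1}\|^2\right),
\]
where $\psi(u) = 2(e^u-1-u)/u^2$ is the usual Bennett function; here the almost-sure bound $\|M_k - M_{k-1}\| \leq a$ is used precisely to control the tail of the power-series expansion of $\cosh$ by $\psi(\lambda a)$.

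From this one-step inequality the process
\[
Z_k \;=\; \cosh(\lambda\|M_k\|)\,\exp\!\left(-\tfrac{D^2}{2}\lambda^2\psi(\lambda a)\sum_{j\leq k}\mathbb{E}_{j-1}\|M_j-M_{j-1}\|^2\right)
\]
is a non-negative supermartingale with $Z_0 = 1$. Doob's maximal inequality, together with $\cosh(\lambda\eta)\geq e^{\lambda\eta}/2$ and the assumed envelope $\big\|(\sum_j\mathbb{E}_{j-1}\|\Delta M_j\|^2)^{1/2}\big\|_\infty \leq b/D$, gives
\[
\mathbb{P}\!\left(\sup_k\|M_k\| \geq \eta\right) \;\leq\; 2\exp\!\left(-\lambda\eta + \tfrac{\lambda^2\psi(\lambda a) b^2}{2}\right).
\]
Optimizing this Cramér-type bound in $\lambda > 0$ is the standard Bennett/Bernstein minimization: using $\psi(u)\leq (1 - u/3)^{-1}$ for $u < 3$ and taking $\lambda = \eta/(b^2 + \eta a/3)$ produces the announced rate $\eta^2/\big(2(b^2 + \eta a/3)\big)$.

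The main obstacle is the supermartingale step. In a Hilbert space this is immediate from the inner-product identity and the martingale property; in a general $(2,D)$-smooth Banach space only a one-sided quadratic expansion is available and one must pass from the $\|\cdot\|^2$ inequality to an MGF-type inequality for $\cosh(\lambda\|\cdot\|)$ while simultaneously absorbing the almost-sure increment bound into the Bennett shape $\psi(\lambda a)$. Handling the interplay between $D$, $a$, and the variance proxy $b/D$ carefully is the delicate point; this is essentially what Pinelis does in \citep{pinelis1994optimum}, and the present statement is obtained by applying his argument to the stopped martingale $M_{k\wedge m}$.
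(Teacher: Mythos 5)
Your proposal is correct and takes essentially the same route as the paper: both reduce the statement to Pinelis's Cram\'er-type bound $\mathbb{P}(\sup_k\|M_k\|\geq \eta)\leq 2\exp\bigl(-\lambda\eta+(e^{\lambda a}-1-\lambda a)\,b^2/a^2\bigr)$ and then replace the exact minimization in $\lambda$ by the standard relaxation $e^u-1-u\leq u^2/(2(1-u/3))$ with the choice $\lambda=\eta/(b^2+\eta a/3)$, which is precisely the inequality the paper imports from \citep{sridharan2002gentle}. The only difference is presentational: the paper quotes the intermediate display from the proof of \citep[Th.\,3.4]{pinelis1994optimum} as given, whereas you re-derive its cosh-supermartingale mechanism (faithfully, up to a harmless factor of $2$ in your sketched one-step bound, since the $(2,D)$-smoothness inequality combined with Jensen gives $\mathbb{E}_{k-1}\|M_k\|^2\leq\|M_{k-1}\|^2+2D^2\,\mathbb{E}_{k-1}\|M_k-M_{k-1}\|^2$).
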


\begin{proof}
In the proof of \citep[theorem 3.4.]{pinelis1994optimum}, we have
\begin{eqnarray}\label{eq:pinelis_in_proof}
\mathbb{P}(\sup_k{||M_k||}\geq \eta) \leq 2\exp\big( -\lambda \eta + \frac{\exp(\lambda a)-1-\lambda a}{a^2}b^2\big)~.
\end{eqnarray}
Besides, from \citep[equation (16)]{sridharan2002gentle} we have
\[
\inf_{\lambda>0}\big[-\lambda\epsilon +(e^{-\lambda}-\lambda-1) c^2\big]\leq -\frac{\epsilon^2}{2(c^2+\epsilon/3)}~.
\]
We can rewrite \eqref{eq:pinelis_in_proof} as
\begin{eqnarray*}
\mathbb{P}(\sup_k{||M_k||}\geq \eta) &\leq& 2\exp\big( -\lambda a \frac{\eta}{a} + (\exp(\lambda a)-1-\lambda a )\frac{b^2}{a^2} \big)\\
%&\leq & 2\exp\big( -\frac{D^2}{2a^2({b^2}/{a^2}+{D}/{3a})})\big)\\
&\leq& 2\exp\big( -\frac{\eta^2}{2({b^2}+{\eta a}/{3})}\big)~.
\end{eqnarray*}
\citep{pinelis1994optimum} uses the exact minimization on $\lambda$ which leads to a better but non standard form for the Bennet concentration inequality.
\end{proof}

\subsubsection{Bennet-Sterfling in Smooth Banach Spaces}

The following lemma allows to identify the constants $(a,b)$ appearing in theorem \ref{th:pinelis_bennet}.

\begin{lemma}\label{lemma:constant_pinelis}
\BEA
\big\vert\big\vert\sup_k{||M_k-M_{k-1}||} \big\vert\big\vert_{\infty} &\leq& \frac{R_v}{N-m}\label{eq:bound_a}\\
\big\vert\big\vert    \big( \sum_{j=1}^{\infty}{\mathbb{E}_{j-1}||M_j-M_{j-1}||^2} \big)^{1/2}   \big\vert\big\vert_{\infty} &\leq&  \sigma_m \frac{\sqrt{m}}{\sqrt{(N-m-1)N}} \nonumber ~,
\EEA
with $\sigma_m$ as in \eqref{def:variance_without_replacement}.
\end{lemma}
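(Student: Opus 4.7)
The first bound is essentially immediate: the increments vanish for $k>m$ by construction of the martingale in \eqref{eq:def_martingale}, so I restrict to $1\leq k\leq m$, apply the pointwise bound \eqref{eq:bound}, i.e.\ $\|M_k-M_{k-1}\|\leq R_v/(N-k)$, and observe that the right-hand side is monotone increasing in $k$ on $[1,m]$, so its maximum is attained at $k=m$, yielding $R_v/(N-m)$. Taking essential supremum preserves the bound since the estimate is deterministic.

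For the second bound I would plug \eqref{eq:rewrite} into $\mathbb{E}_{j-1}\|M_j-M_{j-1}\|^2$ to get
\[
\mathbb{E}_{j-1}\|M_j-M_{j-1}\|^2 \;=\; \frac{1}{(N-j)^2}\,\mathbb{E}_{j-1}\|V_j-\mathbb{E}_{j-1}(V_j)\|^2, \qquad 1\leq j\leq m,
\]
with the remaining terms ($j>m$) being zero. Summing over $j\in[1,m]$ and recognising the definition \eqref{def:variance_without_replacement} of $\sigma_m$ gives
\[
\Bigl\|\Bigl(\sum_{j=1}^{\infty}\mathbb{E}_{j-1}\|M_j-M_{j-1}\|^2\Bigr)^{1/2}\Bigr\|_\infty \;=\; \sigma_m\,\sqrt{\sum_{k=1}^{m}\frac{1}{(N-k)^2}}.
\]
It then remains to control the scalar sum $\sum_{k=1}^{m}(N-k)^{-2}$ by the desired factor $m/[(N-m-1)N]$.

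The scalar estimate I would use is the standard telescoping trick exploiting $(N-k)^2 \geq (N-k)(N-k-1)$, so that
\[
\frac{1}{(N-k)^2} \;\leq\; \frac{1}{(N-k-1)(N-k)} \;=\; \frac{1}{N-k-1}-\frac{1}{N-k}.
\]
Summing for $k=1,\dots,m$ telescopes to $\frac{1}{N-m-1}-\frac{1}{N-1}=\frac{m}{(N-m-1)(N-1)}$. Taking square roots and combining with the previous display closes the proof up to the denominator (the paper writes $N$ where the natural telescoping yields $N-1$; this is either absorbed into constants or tightened by a more careful estimate comparing the sum to $\int_{N-m-1}^{N} x^{-2}\,dx$, which I would use as a drop-in replacement if the exact form is needed).

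The main obstacle I expect is the last step: matching the precise denominator $(N-m-1)N$ in the claimed bound. The easy telescoping gives $(N-m-1)(N-1)$, which is slightly weaker; tightening it requires either an integral comparison (using monotonicity of $1/x^2$) or grouping consecutive terms so that one gets the factor $N$ at the endpoint. Everything else — the martingale property, the identification of the increments, and the use of definition \eqref{def:variance_without_replacement} — is bookkeeping.
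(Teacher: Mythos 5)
Your overall route coincides with the paper's: the first bound follows from \eqref{eq:bound} and monotonicity of $R_v/(N-k)$ on $k\in[1,m]$ (with vanishing increments for $k>m$), and for the second bound you substitute \eqref{eq:rewrite}, observe that by definition \eqref{def:variance_without_replacement} the essential supremum equals $\sigma_m\bigl(\sum_{k=1}^m (N-k)^{-2}\bigr)^{1/2}$ exactly, and reduce everything to the scalar sum $\sum_{k=1}^m (N-k)^{-2}$. That reduction is correct, and you are right that the scalar estimate is the only real content. But your proof as written does not establish the stated constant, and your proposed repair fails: the crude telescoping $\frac{1}{(N-k)^2}\leq\frac{1}{N-k-1}-\frac{1}{N-k}$ gives $\frac{m}{(N-m-1)(N-1)}$, and the integral comparison does not improve it, since the term-by-term bound $\frac{1}{i^2}\leq\int_{i-1}^{i}x^{-2}\,dx$ summed over $i=N-m,\dots,N-1$ again yields $\int_{N-m-1}^{N-1}x^{-2}\,dx=\frac{m}{(N-m-1)(N-1)}$, while extending the upper limit to $N$ as you suggest produces $\int_{N-m-1}^{N}x^{-2}\,dx=\frac{m+1}{(N-m-1)N}$, which has the wrong numerator. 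So neither variant reaches $\frac{m}{(N-m-1)N}$.

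The correct drop-in — and this is in effect what the paper does by invoking \citep[Lemma 2.1]{Serf74} — is the sharper telescoping $\frac{1}{i^2}\leq\frac{1}{i^2-1/4}=\frac{1}{i-1/2}-\frac{1}{i+1/2}$, which gives
\[
\sum_{i=N-m}^{N-1}\frac{1}{i^2}\;\leq\;\frac{1}{N-m-1/2}-\frac{1}{N-1/2}\;=\;\frac{m}{(N-m-1/2)(N-1/2)}\;\leq\;\frac{m}{(N-m-1)N},
\]
where the last inequality holds because $(N-m-1/2)(N-1/2)-(N-m-1)N=m/2+1/4>0$. With this substitute for your scalar step, the rest of your argument (martingale increments, identification of $\sigma_m$, taking square roots and essential suprema) goes through verbatim and matches the paper's proof.
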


\begin{proof}
%%%
% for a 
%%%
\eqref{eq:bound_a} directly follows from \eqref{eq:bound}.
%%%
% for b
%%%
Because of \eqref{eq:rewrite}, we have 
\BEAS
\sum_{k=1}^{\infty}{\mathbb{E}_{k-1}(||M_k-M_{k-1}||^2)} & = & \sum_{k=1}^{m}{\frac{1}{(N-k)^2}\mathbb{E}_{k-1}(||V_k-\mathbb{E}_{k-1}(V_k)||^2)}~.
\EEAS
Because of \eqref{def:variance_without_replacement}, we have,
\begin{eqnarray*}
\sum_{k=1}^{\infty}{\mathbb{E}_{k-1}(||M_k-M_{k-1}||^2)} & = & \sigma^2 \sum_{k=1}^{m}{\frac{1}{(N-k)^2}}~.
\end{eqnarray*}
Because of Lemma 2.1. in \citep{Serf74}, we have
\begin{eqnarray*}
\sum_{k=1}^{m}{\frac{1}{(N-k)^2}} &=& \sum_{k=N-m-1+1}^{N-1}{\frac{1}{k^2}}\\
 &\leq& \frac{m}{N(N-m-1)}~.
\end{eqnarray*}
It leads to 
\begin{eqnarray*}
\sum_{k=1}^{\infty}{\mathbb{E}_{k-1}(||M_k-M_{k-1}||^2)} & \leq & \sigma^2 \frac{m}{N(N-m-1)}
\end{eqnarray*}
and the desired result.
\end{proof}

\begin{theorem}\label{th:Bennet_Sterfling}
Consider $V$ a discrete set of $N$ vectors in a $(2,D)-$Banach space and $(V_i)_{i=1,\ldots,m}$ the random variables obtained by sampling without replacements $m$ elements of $V$. For any $\epsilon>0$,
\BEAS
\mathbb{P}\Big(\Big\lvert\Big\lvert\frac{1}{m}\sum_{i=1}^{m}{V_i} - \bar{\vv} \Big\lvert\Big\lvert \geq \epsilon \Big) & \leq & 2\exp\Big( - \frac{m\epsilon^2}{2\big(2 D^2 \frac{N-m}{N}\sigma^2 +\epsilon R_v/3\big)}\Big)~,
\EEAS
with $R_v\triangleq \sup_{\vv\in V}||\vv||$, and
\[
\sigma_m \triangleq \frac{1}{\sqrt{\sum_{k=1}^{m}{\frac{1}{(N-k)^2}}}} \Big\vert\Big\vert\Big(\sum_{k=1}^{m}{\frac{1}{(N-k)^2}\mathbb{E}_{k-1} \big\vert\big\vert V_k - \mathbb{E}_{k-1}V_k\big\vert\big\vert^2}\Big)^{1/2} \Big\vert\Big\vert_{\infty}~.
\]
\end{theorem}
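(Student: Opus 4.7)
The strategy is to reduce the tail bound to a tail bound on the forward martingale $(M_k)_{k\in\mathbb{N}}$ constructed in \eqref{eq:def_martingale}, and then invoke the Banach-space Bennett inequality of Pinelis (Theorem \ref{th:pinelis_bennet}) with the range and variance constants collected in Lemma \ref{lemma:constant_pinelis}. No new probabilistic input is needed: conceptually, ``martingale identity $+$ Pinelis $+$ Lemma \ref{lemma:constant_pinelis}'' suffices.

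First I would rewrite the target sample average in terms of $M_m$. Directly from the definition,
\[
M_m \;=\; \frac{1}{N-m}\sum_{i=1}^{m}(V_i-\bar v) \;=\; \frac{m}{N-m}\left(\frac{1}{m}\sum_{i=1}^{m}V_i - \bar v\right),
\]
so $\{\|\tfrac{1}{m}\sum_i V_i - \bar v\|\geq \epsilon\}$ coincides with $\{\|M_m\|\geq m\epsilon/(N-m)\}$. Since, by Lemma \ref{lem:proof_martingale}, $(M_k)$ is a forward martingale that is constant equal to $M_m$ for $k > m$, we have $\sup_k\|M_k\|\geq \|M_m\|$ and therefore
\[
\mathbb{P}\!\left(\bigl\|\tfrac{1}{m}\textstyle\sum_{i=1}^m V_i - \bar v\bigr\|\geq \epsilon\right) \;\leq\; \mathbb{P}\!\left(\sup_k \|M_k\|\geq \tfrac{m\epsilon}{N-m}\right).
\]

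Next I would apply Theorem \ref{th:pinelis_bennet} with $\eta = m\epsilon/(N-m)$ and constants
\[
a \;=\; \frac{R_v}{N-m}, \qquad b \;=\; D\,\sigma_m\,\sqrt{\frac{m}{(N-m-1)N}},
\]
both valid by Lemma \ref{lemma:constant_pinelis}. Substituting into $2\exp(-\eta^2/(2(b^2+\eta a/3)))$ and clearing the common factor $(N-m)^2/m$ in the numerator and denominator of the exponent yields
\[
\mathbb{P}\!\left(\sup_k\|M_k\|\geq \eta\right) \;\leq\; 2\exp\!\left(-\frac{m\epsilon^2}{2\left(D^2\sigma_m^2\,\dfrac{(N-m)^2}{(N-m-1)N}\;+\;\epsilon R_v/3\right)}\right).
\]

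The final step is cosmetic: whenever $N-m\geq 2$, the elementary inequality $(N-m)^2/((N-m-1)N) \leq 2(N-m)/N$ holds, which replaces the awkward factor by the clean $2D^2\sigma_m^2 (N-m)/N$ appearing in the statement. The main obstacle in this plan is purely bookkeeping around $\eta$, $a$, $b$ and the final simplification; the only nontrivial ingredients are already isolated in Lemma \ref{lem:proof_martingale}, Lemma \ref{lemma:constant_pinelis} and Theorem \ref{th:pinelis_bennet}.
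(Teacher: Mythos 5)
Your proposal is correct and follows essentially the same route as the paper's own proof: the forward martingale of \eqref{eq:def_martingale}, Pinelis' Banach-space Bennett inequality (Theorem \ref{th:pinelis_bennet}) with the constants $a = R_v/(N-m)$ and $b = D\sigma_m\sqrt{m/((N-m-1)N)}$ from Lemma \ref{lemma:constant_pinelis}, and the substitution $\eta = m\epsilon/(N-m)$. If anything, your write-up is slightly more careful than the paper's, since you make explicit both the step $\sup_k\|M_k\|\geq\|M_m\|$ and the condition $N-m\geq 2$ needed for the final bound $(N-m)^2/((N-m-1)N)\leq 2(N-m)/N$, which the paper leaves implicit.
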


\begin{proof}
Using Theorem \ref{th:pinelis_bennet} with the forward martingale \eqref{eq:def_martingale}, we have for any $\eta>0$,
\BEAS
\mathbb{P}\Big(\frac{1}{N-m}\Big\lvert\Big\lvert\sum_{i=1}^{m}{(V_i - \bar{\vv} )}\Big\lvert\Big\lvert \geq \eta\Big) &\leq & \mathbb{P}(\sup_i||M_i||\geq D) \nonumber\\
\mathbb{P}\Big(\Big\lvert\Big\lvert\frac{1}{m}\sum_{i=1}^{m}{V_i} - \bar{\vv} \Big\lvert\Big\lvert \geq \frac{N-m}{m}\eta\Big) & \leq & 2 \exp\big(-\frac{\eta^2}{2(b^2+\eta a/3)}\big)~.
\EEAS
Because of lemma \ref{lemma:constant_pinelis}, $a = \frac{R_v}{N-m}$ and $b = D \sigma_m \frac{\sqrt{n}}{\sqrt{N(N-m-1)}}$ is a good choice and leads to
\BEAS
\mathbb{P}\Big(\Big\lvert\Big\lvert\frac{1}{m}\sum_{i=1}^{m}{V_i} - \bar{\vv} \Big\lvert\Big\lvert \geq \frac{N-m}{m}\eta\Big) & \leq & 2\exp\big( - \frac{m}{(N-m)^2} \frac{m\epsilon}{2\big(D^2 \frac{m}{N(N-m-1)}\sigma_m^2 +\frac{m}{(N-m)^2}\epsilon R_v/3\big)}\big)\\
& \leq & 2\exp\big( - \frac{m\epsilon}{2\big(2 D^2 \frac{N-m}{N}\sigma_m^2 +\epsilon R_v/3\big)}\big)~,
\EEAS
for any $\eta >0$ with $\epsilon = \frac{N-m}{m}\eta$.
\end{proof}

\subsubsection{Approximate Caratheodory with High Sampling Ratio and Low Variance}

The primary tool for proving Approximate Caratheodory is to find a lower bound on the sampling ratio sufficient for the tail of the distribution at given level $\epsilon_0$ not to exceed a given probability $\delta_0$. With the Bennet-Sterfling inequality, we express a lower bound in the following lemma.

\begin{lemma}\label{lemma:inverse_proba_inequality}
In the setting of Theorem~\ref{th:Bennet_Sterfling}, for any $\delta_0\in]0,1[$ and $\epsilon_0>0$, if the sampling ratio $\alpha_m$ satisfies
\BEAS
\alpha_m &\geq & \frac{2 \ln(2/\delta_0) \big[ 2(D\sigma)^2 +\epsilon_0 R_v/3\big]/N}{\epsilon_0^2 + 2 \ln(2/\delta_0) \big[2(D\sigma_m)^2\big]/N}~,
\EEAS
we have
\BEQ\label{eq:goal_inverse_probability}
\mathbb{P}\Big(\Big\lvert\Big\lvert\frac{1}{m}\sum_{i=1}^{m}{V_i} - \bar{\vv} \Big\lvert\Big\lvert \geq \epsilon_0\Big)  \leq  \delta_0~.
\EEQ
\end{lemma}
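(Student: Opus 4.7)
The plan is to invert the tail bound from Theorem~\ref{th:Bennet_Sterfling}. That theorem gives, for any $\epsilon > 0$,
\[
\mathbb{P}\Big(\Big\lvert\Big\lvert\tfrac{1}{m}\sum_{i=1}^m V_i - \bar v\Big\lvert\Big\lvert \geq \epsilon\Big) \leq 2\exp\!\Big(-\frac{m\epsilon^2}{2(2D^2\frac{N-m}{N}\sigma_m^2 + \epsilon R_v/3)}\Big),
\]
so to establish \eqref{eq:goal_inverse_probability} it suffices to find a condition on $m$ that forces the right-hand side, evaluated at $\epsilon = \epsilon_0$, to be bounded above by $\delta_0$.

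The first step is to take a logarithm and rearrange. The inequality $2\exp(-x) \leq \delta_0$ is equivalent to $x \geq \ln(2/\delta_0)$, hence the condition becomes
\[
m\epsilon_0^2 \;\geq\; 2\ln(2/\delta_0)\Big(2D^2\tfrac{N-m}{N}\sigma_m^2 + \epsilon_0 R_v/3\Big).
\]

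Next I would substitute $m = \alpha_m N$ so that $(N-m)/N = 1 - \alpha_m$ and collect the $\alpha_m$ terms. Writing $L \triangleq 2\ln(2/\delta_0)$ for brevity, the inequality reads
\[
\alpha_m N\epsilon_0^2 \;\geq\; L\big(2D^2\sigma_m^2 + \epsilon_0 R_v/3\big) \;-\; 2LD^2\sigma_m^2\,\alpha_m,
\]
and moving the $\alpha_m$ term to the left yields
\[
\alpha_m\big(N\epsilon_0^2 + 2LD^2\sigma_m^2\big) \;\geq\; L\big(2D^2\sigma_m^2 + \epsilon_0 R_v/3\big).
\]
Dividing through by $N\epsilon_0^2 + 2LD^2\sigma_m^2$ and then by $N$ in numerator and denominator reproduces exactly the stated lower bound on $\alpha_m$, which completes the argument.

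The derivation is purely algebraic and I do not expect any real obstacle; the only thing to be careful with is keeping the $N$-scaling consistent between numerator and denominator so that the final fraction matches the form displayed in the lemma (with both terms divided by $N$), and noting that the conclusion does not depend on whether $L/N$ is small, so no additional assumption beyond $\delta_0 \in (0,1)$ and $\epsilon_0 > 0$ is needed.
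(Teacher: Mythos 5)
Your proposal is correct and follows essentially the same route as the paper's proof: invert the Bennett--Sterfling tail bound at $\epsilon = \epsilon_0$, take logarithms, substitute $m = \alpha_m N$ so that $(N-m)/N = 1-\alpha_m$, and solve the resulting linear inequality for $\alpha_m$ (the paper carries $-\ln(\delta_0/2)$ where you write $\ln(2/\delta_0)$, which is the same quantity). The algebra matches step for step, so there is nothing to add.
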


\begin{proof}
Given $\delta_0\in]0,1[$ and $\epsilon_0>0$, we are looking for a sampling ratio $\alpha_m = \frac{m}{N}$ such that
\[
\mathbb{P}\Big(\Big\lvert\Big\lvert\frac{1}{m}\sum_{i=1}^{m}{V_i} - \bar{\vv} \Big\lvert\Big\lvert \geq \epsilon_0\Big) \leq  \delta_0~.
\]
With Bennet-Sterfling concentration inequality, it is sufficient to find $\alpha_m$ such that
\BEAS
2\exp\big( - \frac{m\epsilon^2}{2\big(2 D^2 \frac{N-m}{N}\sigma_m^2 +\epsilon R_v/3\big)}\big) &\leq & \delta_0\\
-\frac{N\alpha_m\epsilon^2}{2(D\sigma_m)^2(1-\alpha_m) + \epsilon R_v/3} &\leq& 2\ln(\delta_0/2)\\
\alpha_m\epsilon^2 &\geq & -\frac{2}{N} \ln(\delta_0/2)\big[2(D\sigma_m)^2(1-\alpha_m) +\epsilon R_v/3\big]\\
\alpha_m \big[\epsilon^2 - \frac{2}{N} 2(D\sigma_m)^2\ln(\delta_0/2)\big] &\geq & -\frac{2}{N}\ln(\delta_0/2)\big[ 2(D\sigma_m)^2 +\epsilon R_v/3\big]\\
\alpha_m &\geq & - \frac{\frac{2}{N}\ln(\delta_0/2)\big[ 2(D\sigma_m)^2 +\epsilon R_v/3\big]}{\epsilon^2 - \frac{2}{N}\ln(\delta_0/2) 2(D\sigma_m)^2}~.
\EEAS
For \eqref{eq:goal_inverse_probability} to be true, it is sufficient that $\alpha_m$ satisfies the following,
\[
\alpha_m \geq  \frac{2 \ln(2/\delta_0) \big[ 2(D\sigma_m)^2 +\epsilon_0 R_v/3\big]/N}{\epsilon_0^2 + 2 \ln(2/\delta_0) \big[2(D\sigma_m)^2\big]/N}~.
\]
which is the desired result.
\end{proof}

Using the normalization of Theorem \ref{th:approx-cara-high}, we get
\[
\alpha_m \geq  \frac{2 \ln(2/\delta_0) \big[ 2(D\sigma_m)^2 +\epsilon_0 R_v/(3N)\big]N}{\epsilon_0^2 + 2 \ln(2/\delta_0) \big[2(D\sigma_m)^2\big]N}~.
\]
and the leading term is controlled by the variance.
\end{document}